\documentclass[3p]{elsarticle}
\usepackage[x11names,usenames,dvipsnames]{xcolor}
\usepackage{tikz}
\usepackage{framed}
\usepackage{url}
\usepackage{subfig}
\usepackage{hhline}
\usepackage[T1]{fontenc}
\usepackage{comment}
\usepackage{lineno, amsthm}
\usepackage{mathtools}
\usepackage{mathdots,enumerate}
\usepackage{textcomp}
\usepackage{siunitx}
\usepackage{amssymb,amsfonts}
\usepackage{amsmath,paralist,enumitem,mathrsfs,graphicx}
\usepackage{epstopdf}
\usepackage{pgfplots}
\newcommand\norm[1]{\left\lVert#1\right\rVert}

\usetikzlibrary{pgfplots.dateplot}
\renewcommand{\qedsymbol}{$\blacksquare$}
\newcommand\restr[2]{{
		\left.\kern-\nulldelimiterspace 
		#1 
		\vphantom{\big|} 
		\right|_{#2} 
}}
\makeatletter
\newcommand{\vas}{\bBigg@{3}}
\newcommand{\vast}{\bBigg@{4}}
\newcommand{\Vast}{\bBigg@{5}}
\makeatother
\newcommand{\upperRomannumeral}[1]{\uppercase\expandafter{\romannumeral#1}}
\newcommand{\lowerRomannumeral}[1]{\lowercase\expandafter{\romannumeral#1}}

\makeatletter
\let\save@mathaccent\mathaccent
\newcommand*\if@single[3]{%
	\setbox0\hbox{${\mathaccent"0362{#1}}^H$}%
	\setbox2\hbox{${\mathaccent"0362{\kern0pt#1}}^H$}%
	\ifdim\ht0=\ht2 #3\else #2\fi
}
\newcommand*\rel@kern[1]{\kern#1\dimexpr\macc@kerna}
\newcommand*\widebar[1]{\@ifnextchar^{{\wide@bar{#1}{0}}}{\wide@bar{#1}{1}}}
\newcommand*\wide@bar[2]{\if@single{#1}{\wide@bar@{#1}{#2}{1}}{\wide@bar@{#1}{#2}{2}}}
\newcommand*\wide@bar@[3]{%
	\begingroup
	\def\mathaccent##1##2{%
		\let\mathaccent\save@mathaccent
		\if#32 \let\macc@nucleus\first@char \fi
		\setbox\z@\hbox{$\macc@style{\macc@nucleus}_{}$}%
		\setbox\tw@\hbox{$\macc@style{\macc@nucleus}{}_{}$}%
		\dimen@\wd\tw@
		\advance\dimen@-\wd\z@
		\divide\dimen@ 3
		\@tempdima\wd\tw@
		\advance\@tempdima-\scriptspace
		\divide\@tempdima 10
		\advance\dimen@-\@tempdima
		\ifdim\dimen@>\z@ \dimen@0pt\fi
		\rel@kern{0.6}\kern-\dimen@
		\if#31
		\overline{\rel@kern{-0.6}\kern\dimen@\macc@nucleus\rel@kern{0.4}\kern\dimen@}%
		\advance\dimen@0.4\dimexpr\macc@kerna
		\let\final@kern#2%
		\ifdim\dimen@<\z@ \let\final@kern1\fi
		\if\final@kern1 \kern-\dimen@\fi
		\else
		\overline{\rel@kern{-0.6}\kern\dimen@#1}%
		\fi
	}%
	\macc@depth\@ne
	\let\math@bgroup\@empty \let\math@egroup\macc@set@skewchar
	\mathsurround\z@ \frozen@everymath{\mathgroup\macc@group\relax}%
	\macc@set@skewchar\relax
	\let\mathaccentV\macc@nested@a
	\if#31
	\macc@nested@a\relax111{#1}%
	\else
	\def\gobble@till@marker##1\endmarker{}%
	\futurelet\first@char\gobble@till@marker#1\endmarker
	\ifcat\noexpand\first@char A\else
	\def\first@char{}%
	\fi
	\macc@nested@a\relax111{\first@char}%
	\fi
	\endgroup
}

\usepackage{indentfirst}
\usepackage{hyperref}
\usepackage{epstopdf}

\theoremstyle{plain}
\newtheorem{theorem}{Theorem}

\newtheorem{lemma}[theorem]{Lemma}

\newtheorem{corollary}[theorem]{Corollary}

\newtheorem{rem}{Remark}
\theoremstyle{definition}

\modulolinenumbers[2]
\newenvironment{myproof}[2] {\paragraph{Proof of {#1} {#2}}}{\hfill\qedsymbol}
\makeatletter
\def\ps@pprintTitle{%
	\let\@oddhead\@empty
	\let\@evenhead\@empty
	\def\@oddfoot{\footnotesize\itshape
		\ifx\@empty\@empty
		\else\@journal\fi\hfill}%
	\let\@evenfoot\@oddfoot	
}
\makeatother
\title{Probability computation for high--dimensional semilinear SDEs driven by  isotropic $\alpha-$stable processes via mild Kolmogorov equations}

\author{Alessandro Bondi\corref{mycorrespondingauthor}}

\cortext[mycorrespondingauthor]{Classe di Scienze, Scuola Normale Superiore di Pisa, $56126$ Pisa, Italy. Email: {\tt alessandro.bondi@sns.it}} 
\begin{document}
\begin{abstract}
	Semilinear, $N-$dimensional stochastic differential equations (SDEs) driven by additive Lévy noise are investigated. Specifically, given $\alpha\in\left(\frac{1}{2},1\right),$ the interest is on SDEs driven by $2\alpha-$stable, rotation--invariant processes obtained by subordination of a Brownian motion. An original connection between the time--dependent Markov transition semigroup associated with their solutions and Kolmogorov backward equations in mild integral form is established via regularization--by--noise techniques. Such a link is the starting point for an iterative method which allows to approximate probabilities related to the SDEs with a single batch of Monte Carlo simulations as several parameters change, bringing a compelling computational advantage over the standard Monte Carlo approach. This method also pertains to the numerical computation of solutions to high--dimensional integro--differential Kolmogorov backward equations. The scheme, and in particular the first order approximation it provides, is then applied for two nonlinear vector fields and shown to offer  satisfactory results in dimension $N=100$.
	\\[1ex] 
	\noindent\textbf {Keywords:} Kolmogorov equations, Semilinear SDEs, Iterative scheme, Isotropic $\alpha-$stable Lévy processes.
	\\[1ex] 
	\noindent\textbf{MSC2020:} 60G52, 60H50, 65C20, 45K05, 47D07.
\end{abstract}
	\maketitle

	\section{Introduction}
	In this paper, we are concerned with the study of quantities related to the $N-$dimensional, semilinear stochastic differential equation (SDE) 
	\begin{equation}\label{i1}
		\begin{cases}
			dX_t=\left(AX_t+B_0\left(t,X_t\right)\right)dt+\sqrt{Q}\,dW_{L_t},&t\in\left[s,T\right],\\
			X_s=x\in\mathbb{R}^N,
		\end{cases}
	\end{equation}
	with a specific interest in the case $N$ high. Here, given $\alpha\in\left(\frac{1}{2},1\right)$, $L$ is an $\alpha-$stable subordinator (i.e., an increasing Lévy process) independent from $\left(\beta^n\right)_{n=1,\dots, N}$, which in turn are independent Brownian motions; we write $W=\left[\begin{matrix}
		\beta^1,\dots,\beta^N
	\end{matrix}\right]^\top$. All these processes are defined in a common complete probability space $\left(\Omega,\mathcal{F},\mathbb{P}\right),$ which we endow with the minimal augmented filtration generated by the subordinated Brownian motion $W_L$.
	Moreover, $T>0$ is a finite time horizon and $s\in\left[0,T\right]$ is the initial time. 
	As for $A, Q \in\mathbb R^{N\times N}$, they  are diagonal matrices with $A$ negative--definite and $Q$ positive--definite. For our numerical experiments we will consider $Q=\sigma^2 \text{Id},$ being $\text{Id}\in\mathbb{R}^{N\times N}$ the identity matrix, so that $\sigma>0$ is a parameter describing the strength of the noise. Finally, the nonlinear bounded vector field $B_0\colon \left[0,T\right]\times \mathbb{R}^N\to \mathbb{R}^N$ is subject to suitable regularity conditions which will be specified in the sequel and guarantee, among other things, the existence of a pathwise unique solution of \eqref{i1}: it will be denoted by $X^{s,x}=\left(X^{s,x}_t\right)_{t\in\left[s,T\right]}$.
	
	Connected to the SDE in \eqref{i1}, we have the following \emph{Kolmogorov backward equation}:
	\begin{equation}\label{i2}
		\begin{cases}
			\partial_su\left(s,x\right)=-\left\langle Ax+B_0\left(s,x\right), \nabla^\top u \left(s,x\right)\right\rangle 
			\\
			\qquad\qquad\qquad\qquad - \int_{\mathbb{R}^N}\left[u\left(s,x+\sqrt{Q}z\right)-u\left(s,x\right)-1_D\left(z\right)\nabla u\left(s,x\right)\sqrt{Q}z\right]\nu\left(dz\right),&s\in\left[0,t\right),\\
			u\left(t,x\right)=\phi\left(x\right),& x\in\mathbb{R}^N,
		\end{cases}
	\end{equation}
	where  $\phi\colon\mathbb{R}^N\to \mathbb{R}$, $D=\left\{z\in\mathbb{R}^N,\, \left|z\right|\le 1 \right\}$ is the closed unit ball and we fix $t \in\left[0,T\right]$. Here $\nu\left(dz\right)$ is the Lévy measure of $W_L$, and up to a positive multiplicative constant is of the form $\nu\left(dz\right)= \left|z\right|^{-\left(N+2\alpha\right)}dz$ (see, e.g., \cite[Theorem 30.1]{Sato}). The link between the equations in \eqref{i1} and  \eqref{i2} is provided by Theorem \ref*{kfb} \emph{\ref{back_k} }below (see also the book \cite{Ku19} for related results), where it is shown that the time--dependent Markov transition semigroup  $\mathbb{E}\left[\phi\left(X_t^{s,x}\right)\right]$ associated with \eqref{i1} satisfies \eqref{i2} in the closed interval $\left[0,t\right]$ for every $\phi\in C^3_b\left(\mathbb{R}^N\right)$. Moreover,  we are able to extend the validity of this connection in $\left[0,t\right)$ to every function $\phi\in\mathcal{B}_b\left(\mathbb{R}^N\right)$ through an original procedure based on regularization--by--noise and a mild, integral formulation of \eqref{i2} (see Remark~\ref{rem1}). 
	
	In the present work, we are precisely interested in these expected values, with particular attention to the case $\phi\left(x\right)=1_{\left\{\left|x\right|>R\right\}}$ (for some threshold $R>0$), where one has  $\mathbb{E}\left[\phi\left(X^{s,x}_t\right)\right]=\mathbb{P}\left(\left|X^{s,x}_t\right|>R\right)$. Hence we want to describe a method which allows to compute probabilities related to the solution of the SDE \eqref{i1}. Trying to get an estimate of them by numerically solving the integro--differential equation \eqref{i2} is a typical example of \emph{curse of dimensionality} (CoD), and since we intend to deal with a high dimension (in the simulations we take $N=100$), this is an unfeasible way to proceed. The canonical approach to tackle our problem is the Monte Carlo method: several paths of $X^{s,x}$ are simulated by the Euler--Maruyama scheme with a fine time step, and then the final points of these trajectories are averaged to get an approximation of the desired expected values by virtue of the strong law of large numbers. However, if we were to follow this scheme (which is known to be free of the CoD), then we would have to start over the procedure every time we change the starting point $x$ and the starting time $s$, the noise strength $\sigma$ and even the nonlinearity $B_0$, a practice that is very common in a wide range of applications including weather forecasts and calibration of financial models (see \cite{BBGJJ} and references therein). In order to overcome this setback, we aim to extend to our framework the ideas developed in the papers \cite{LRF1, LRF2} for the Gaussian case, namely we search for an iterative scheme which relies on a single bulk of Monte Carlo simulations independent from the aforementioned parameters. Specifically, to approximate the value of the iterates $v^n_s\left(t,x\right),\,n\in\mathbb{N}\cup\left\{0\right\},$ we just need to simulate once and for all, using the Euler--Maruyama scheme, a large number of sample paths of the subordinator $L$ and of the stochastic convolution $\widetilde{Z}^{0}_t=\int_{0}^{t}e^{\left(t-r\right)A}dW_{L_r},\,t\in\left[0,T\right]$, which is the unique (up to indistinguishability) solution of the linear SDE
	\[
	d\widetilde{Z}^{0}_t=A\widetilde{Z}^{0}_tdt+dW_{L_t},\qquad \widetilde{Z}_0^{0}=0.
	\]
	
	The main novelty of the approach that we propose consists in the structure of the noise $W_L$, which is a $2\alpha-$stable, rotation--invariant Lévy process (cfr. \cite[Example $30.6$]{Sato}). In particular, the introduction of $L$ considerably complicates the framework compared to the Brownian one treated in \cite{LRF1, LRF2}. This fact leads us to develop an original procedure --essentially based on conditioning with respect to the $\sigma-$algebra generated by the subordinator-- to get an expression for the iterates which is suitable for applications. Moreover, the theoretical foundation of the iterative method analyzed in this work, Theorem \ref{connection}, has a remarkable interest on its own. Indeed, it establishes a connection between the time--dependent Markov transition semigroup associated with \eqref{i1} and a mild, integral formulation of \eqref{i2} (see Equation \eqref{Kolm_mild}) that, at the best of our knowledge, is new when it comes to isotropic Lévy processes.
	
	The paper is structured as follows. Section \ref{pre} describes the setting and recalls the main concepts that will be widely used in the rest of the paper. In addition, it introduces the integral formulation of the Kolmogorov equation \eqref{i2} and shows its well--posedness. Next, in Section \ref{a_conn} (see Theorem \ref{connection}) we provide the probabilistic interpretation of \eqref{i2} in mild form, along with other interesting regularization--by--noise results for SDEs driven by subordinated Wiener processes.  In Section \ref{ite_sc} we define the iterative scheme and prove its convergence to the expected values that we are trying to approximate. Next, Section \ref{first} is concerned with the computation of the first iterate $v^1_s\left(t,x\right)$; it is divided into two subsections referring to the deterministic and random time--shifts, respectively. Its results are used in Section \ref{sec_3} as the base case for the induction argument that allows to calculate $v^n_s\left(t,x\right)$ (see Theorem \ref{general}). The last part (Section \ref{simulations}) is devoted to numerical experiments in dimension $N=100$ for two  choices of the nonlinear vector field $B_0$, with particular attention on the improvements provided by the first iteration over the linear approximation corresponding to the Ornstein–Uhlenbeck (hereafter OU) processes.
	Finally, \ref{appendix} contains the proof of Lemma \ref{hoc}.
	
	\textbf{Notation:} Let $d,m,n\in\mathbb{N}$. In this paper, elements of $\mathbb{R}^d$ are columns vectors. For any $u,v\in\mathbb{R}^d$, we denote by $\left|u\right|$ the Euclidean norm and by $\left\langle u,v\right\rangle=u^\top v$ the standard scalar product. For a matrix $A\in\mathbb{R}^{d\times m},$ $\left|A\right|=\sup_{x\in\mathbb{R}^m \,:\, \left|x\right|=1}\left|Ax\right|$ is the operator norm. Given a vector field $B\colon \mathbb{R}^d\to\mathbb{R}^{m\times n}$, the uniform norm is $\norm{B}_\infty=\sup_{x\in\mathbb{R}^d}\left|B\left(x\right)\right|$. In particular, if $n=1$ then the Jacobian matrix is denoted by $DB\in\mathbb{R}^{m\times d}$, and $D_hB=DBh,\,h\in\mathbb{R}^d$; if also $m=1$ (so that $B$ is a scalar function) then the gradient $\nabla B$ is a row vector and $D^2B\in\mathbb{R}^{d\times d}$ represents the Hessian matrix. For an integer $k\in\mathbb{N}\cup \left\{0\right\}$, the space $C_b^k\left(\mathbb{R}^d;\mathbb{R}^{m\times n}\right)$ is constituted by the continuous vector fields $B$ which are bounded, continuously differentiable up to order $k$ with bounded derivatives. Taken $h=1,\dots,k$ and $B\in C_b^k\left(\mathbb{R}^d;\mathbb{R}^{m\times n}\right)$, we write $\norm{\partial^hB}_\infty=\sup_{i,j,\mathbf{h}}\norm{\partial_{\mathbf{h}}B_{i,j}}_\infty$, where  $B=\left(B_{i,j}\right),\,i=1,\dots,m,\,j=1,\dots,n$ and $\mathbf{h}\in\left(\mathbb{N}\cup \left\{0\right\}\right)^d$ is a multi--index with length $\norm{\mathbf{h}}_1=h$.
	\section{Preliminaries and  Kolmogorov backward equation in mild form} \label{pre}
	Fix $N\in\mathbb{N}$ and a complete probability space $\left(\Omega,\mathcal{F},\mathbb{P}\right)$. Consider  $N$ independent Brownian motions $\left(\beta^n\right)_{n=1,\dots,N}$: we write $W=\left[
		\beta^1,\dots,\beta^N
	\right]^\top$. Moreover, for $\alpha\in\left(0,1\right)$ we take a strictly $\alpha-$stable subordinator $L=\left(L_t\right)_{t\ge0}$ independent from $\left(\beta^n\right)_n$, and denote by $\mathcal{F}^L$ the augmented $\sigma-$algebra it generates, i.e., $\mathcal{F}^L=\sigma\left(\mathcal{F}^L_0\cup \mathcal{N}\right)$, where $\mathcal{F}^L_0$ is the natural $\sigma-$algebra generated by $L$ and $\mathcal{N}$ is the family of $\mathcal{F}-$negligible sets. In other words, $L$ is an increasing Lévy process with (cfr. \cite[Example $24.12$]{Sato})
\begin{equation}\label{subo}
	\mathbb{E}\left[e^{iuL_1}\right]=\exp\left\{-\bar{\gamma}^\alpha\left|u\right|^\alpha\left(1-i\tan\frac{\pi\alpha}{2}\text{sign}\,u\right)\right\},\quad u\in\mathbb{R}, \text{ for some }\bar{\gamma}>0.
\end{equation}
Let us introduce the diagonal matrices $A=-\text{diag}\left[
\lambda_1,\dots,\lambda_N\right]$ and $Q=\text{diag}\left[
\sigma^2_1,\dots,\sigma^2_N\right]$, with $0<\lambda_1\le\dots\le\lambda_N$ and 
$\sigma^2_n>0,\,n=1,\dots,N$. We endow $\Omega$ with the minimal augmented filtration $\mathbb{F}=\left(\mathcal{F}_t\right)_{t\ge0}$ generated by $W_L$, which means $\mathcal{F}_t=\sigma\left(\mathcal{F}_{0,t}^{W_L}\cup \mathcal{N}\right)$ for $t\ge0$, with $\left(\mathcal{F}^{W_L}_{0,t}\right)_{t\ge0}$ being the natural filtration of $W_L$.

	Given $T>0$ and a continuous function $f\colon\left[0,T\right]\to\mathbb{R}^N$, if $x\in\mathbb{R}^N$ and  $0\le s<T$ then $Z^{s,x}=\left(Z^{s,x}_t\right)_{t\in \left[s,T\right]}$ is the OU process starting from $x$ at time $s$, i.e., it is the unique solution of the next linear SDE
	\begin{equation}\label{OU}
		dZ_t^{s,x}=\left(AZ_t^{s,x}+f\left(t\right)\right)dt+\sqrt{Q}\,dW_{L_t},\quad Z^{s,x}_s=x.
	\end{equation}
	We denote by $R=\left(R_{s,t}\right),\,{0\le s \le t\le T},$ the time--dependent, Markov transition semigroup associated with this family of processes:
	\begin{equation}\label{mse}
		R_{s,t}\phi=\mathbb{E}\left[\phi\left(Z^{s,\cdot}_t\right)\right],\quad 0\le s \le t<T,\, \phi\in\mathcal{B}_b\left(\mathbb{R}^d\right),
	\end{equation}
	where $\mathcal{B}_b\left(\mathbb{R}^N\right)$ denotes the space of real--valued, Borel measurable and bounded functions defined on $\mathbb{R}^N$. The Chapman--Kolmogorov equations ensure that 
	\begin{equation}\label{CK}
		R_{s,t}\left(R_{t,u}\phi\right)={R_{s,u}\phi},\quad 0\le s<t<u\le T,\,\phi\in\mathcal{B}_b\left(\mathbb{R}^N\right).
	\end{equation}
	
	For every $0\le s <t\le T$ we define
	$
	F_{s,t} =\int_{s}^{t}e^{\left(t-r\right)A}f\left(r\right)dr\in \mathbb{R}^N$ and $I^L_{s,t}= \int_{s}^{t}e^{2\left(t-r\right)A}Q\,dL_r\colon\Omega\to\mathbb{R}^{N\times N}.
	$
	An adaptation of \cite[Theorem $6$]{AB}  guarantees that, for every $\phi\in\mathcal{B}_b\left(\mathbb{R}^N\right)$, the function $R_{s,t}\phi$ is differentiable at any point  $x\in\mathbb{R}^N$ in every direction $h\in\mathbb{R}^N$, with   
	\begin{equation}\label{no_bel}
		\left\langle\nabla^\top R_{s,t}\phi\left(x\right),h\right\rangle
		=
		\mathbb{E}\left[\phi\left(Z_{t}^{s,x}\right)
		\left\langle \left(I_{s,t}^L\right)^{-1}e^{\left(t-s\right)A}h,Z^{s,x}_{t}-e^{\left(t-s\right)A}x-F_{s,t}\right\rangle\right].
	\end{equation}
	Moreover, $R_{s,t}\phi\in C^1_b\left(\mathbb{R}^N\right)$ and the following gradient estimate holds true for some constant $c_\alpha>0$:
	\begin{equation}\label{est}
		\norm{\nabla^\top R_{s,t}\phi}_\infty
		\le c_\alpha\norm{\phi}_{\infty}\sup_{n=1,\dots, N}\left(\frac{1}{\sigma_n}\sqrt[2\alpha]{\frac{2\alpha\lambda_n}{1-e^{-2\alpha\lambda_n\left(t-s\right)}}}e^{-\lambda_n\left(t-s\right)}\right),\quad 0\le s < t\le T.
	\end{equation}
	In the sequel, for every $x\in\mathbb{R}^N$ and $t\in\left(0,T\right]$ we are going to need the continuity of $R_{\cdot,t}\phi\left(x\right)$ in the interval $\left[0,t\right)$ [resp., in the closed interval $\left[0,t\right]$]  when $\phi\in\mathcal{B}_b\left(\mathbb{R}^N\right)$ [resp., $\phi\in C_b\left(\mathbb{R}^N\right)$]. 
	In order to prove this property, we first note that  a variation of constants formula lets us consider (from \eqref{OU})
	\begin{equation}\label{explica}
	Z^{s,x}_t=e^{\left(t-s\right)A}x+\int_{s}^{t}e^{\left(t-r\right)A}f\left(r\right)dr+\int_{s}^{t}e^{\left(t-r\right)A}\sqrt{Q}\,dW_{L_r},\quad 0\le s\le t\le T,\quad x\in\mathbb{R}^N.
	\end{equation}
	This expression shows that the process $\left(Z^{s,x}_t\right)_{s\in\left[0,t\right]}$ is stochastically continuous (in the variable $s$). As a consequence, if  $\phi\in C_b\left(\mathbb{R}^N\right)$, then we  can easily deduce the continuity of $R_{\cdot,t}\phi\left(x\right)$ in $\left[0,t\right]$ applying the {continuous mapping} and {Vitali's convergence} theorems to \eqref{mse}. 
	In the general case $\phi\in\mathcal{B}_b\left(\mathbb{R}^N\right),$ one can use the same argument combined with the regularizing property of $R$ and \eqref{CK} to obtain the continuity of $R_{\cdot,t}\phi\left(x\right)$ in $\left[0,t\right)$, as desired.
	Finally, observe that there exists a constant $C=C\left(\alpha,A,Q\right)>0$ 
	such that 
	\[
	c_\alpha\sup_{n=1,\dots, N}\left(\frac{1}{\sigma_n}\sqrt[2\alpha]{\frac{2\alpha\lambda_n}{1-e^{-2\alpha\lambda_n\left(t-s\right)}}}e^{-\lambda_n\left(t-s\right)}\right)
	\le C\frac{1}{\left(t-s\right)^{1/\left(2\alpha\right)}},\quad 0\le s <t\le T.
	\]
	We refer to \cite[Remark $5$]{AB} for a similar computation. 
	Let us assume $\alpha\in\left(\frac{1}{2},1\right)$: in this way, denoting by $\gamma=1/\left(2\alpha\right)$, we have $\gamma\in\left(0,1\right)$ and the bound in \eqref{est} entails
	\begin{equation}\label{est12}
		\norm{\nabla^\top R_{s,t}\phi}_\infty\le C\norm{\phi}_\infty\frac{1}{\left(t-s\right)^\gamma},\quad 0\le s <t\le T,\, \phi\in\mathcal{B}_b\left(\mathbb{R}^N\right).
	\end{equation}
	
	For a given measurable and bounded vector field $B\colon \left[0,T\right]\times \mathbb{R}^N\to  \mathbb{R}^N$, we are concerned with the  analysis of the following  \emph{Kolmogorov backward equation} in mild, integral form:
	\begin{equation}\label{Kolm_mild}
		u^{\phi}_s\left(t,x\right)=
		R_{s,t}\phi\left(x\right)+\int_{s}^{t} R_{s,r}\left(\left\langle B\left(r,\cdot\right),\nabla^\top u^\phi_r\left(t,\cdot\right)\right\rangle\right) \left(x\right)dr,\quad s\in\left[0,t\right],\,x\in\mathbb{R}^N,
	\end{equation}
where $t\in\left(0,T\right]$ and $\phi\in\mathcal{B}_b\left(\mathbb{R}^N\right)$.
	We denote by $\norm{B}_{0,T}=\sup_{0\le t\le T}\norm{B\left(t,\cdot\right)}_\infty$. 
	In order to study \eqref{Kolm_mild}, for every $0< t_1<t_2\le T,$ we consider the Banach space $\left(\Lambda_1^\gamma\left[t_1,t_2\right],\norm{\cdot}_{\Lambda^\gamma_1{\left[t_1,t_2\right]}}\right)$ defined by
	\begin{align*}
		&\Lambda_1^\gamma{\left[t_1,t_2\right]}=\Big\{V\colon \left[t_1,t_2\right]\times \mathbb{R}^N\to\mathbb{R} \text{ measurable}:  V\left(\cdot,x\right)\in C\left(\left[t_1,t_2\right]\right),\,x\in\mathbb{R}^N;\\
		&\qquad\qquad\qquad\qquad\qquad\qquad\qquad\qquad\qquad\quad		 V\left(s,\cdot\right)\in C_b^1\left(\mathbb{R}^N\right),\,s\in\left[t_1,t_2\right];\,\sup_{s\in\left[t_1,t_2\right]}s^\gamma\norm{V\left(s,\cdot\right)}_1<\infty\Big\},\\ &\norm{V}_{\Lambda^\gamma_1{\left[t_1,t_2\right]}}= \sup_{s\in\left[t_1,t_2\right]}s^\gamma\norm{V\left(s,\cdot\right)}_1,\text{ where }\norm{V\left(s,\cdot\right)}_1=\norm{V\left(s,\cdot\right)}_\infty+\norm{\partial^1 V\left(s,\cdot\right)}_\infty.
	\end{align*}
	When $t_1=0$, we are careful to remove the left--end point of the interval $\left[t_1,t_2\right]$ in the previous definitions, so that we will be working with the space $\left(\Lambda^\gamma_1\left(0,t_2\right],\norm{\cdot}_{\Lambda^\gamma_1\left(0,t_2\right]}\right)$.
	The following lemma proves the well--posedness of \eqref{Kolm_mild}. We refer to \cite[Theorem $9.24$]{DPZ} for an analogous result concerning  the \emph{Kolmogorov forward equation}    in mild form associated with OU processes in infinite dimension corresponding to Brownian motions.
	\begin{theorem}\label{well_pos}
		Let $\alpha\in\left(\frac{1}{2},1\right)$ and $B\colon\left[0,T\right]\times \mathbb{R}^N\to \mathbb{R}^N$ be a measurable and bounded vector field. Then for every $\phi\in\mathcal{B}_b\left(\mathbb{R}^N\right)$ and $0<t\le T$, there exists a unique solution $u^\phi_s\left(t,x\right),\,s\in\left[0,t\right],\,x\in\mathbb{R}^N,$ of \eqref{Kolm_mild} such that $
		u^{\phi}_{t-\diamond}\left(t,\cdot\right)\in\Lambda^\gamma_1\left(0,t\right]$, where $\gamma=1/\left(2\alpha\right)$.
	\end{theorem}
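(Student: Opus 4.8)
The plan is to set up a fixed-point argument in the Banach space $\Lambda^\gamma_1\left(0,t\right]$ via the change of variable $s\mapsto t-s$, so that the singularity of the gradient estimate \eqref{est12} at $s=t$ is transported to the origin and matches the weight $s^\gamma$ built into the norm $\norm{\cdot}_{\Lambda^\gamma_1\left(0,t\right]}$. Concretely, for $V\in\Lambda^\gamma_1\left(0,t\right]$ I would write $\widetilde u_\tau=V(\tau,\cdot)$ meaning $u_{t-\tau}$, and define the operator
\[
\left(\mathcal{T}V\right)(\tau,\cdot)=R_{t-\tau,t}\phi+\int_{0}^{\tau}R_{t-\tau,t-r}\left(\left\langle B(t-r,\cdot),\nabla^\top V(r,\cdot)\right\rangle\right)\,dr,\qquad \tau\in(0,t],
\]
which is exactly \eqref{Kolm_mild} rewritten; a solution of \eqref{Kolm_mild} in $\Lambda^\gamma_1\left(0,t\right]$ corresponds to a fixed point of $\mathcal{T}$. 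The first task is to check that $\mathcal{T}$ maps $\Lambda^\gamma_1\left(0,t\right]$ into itself: boundedness and $C^1_b$-regularity in $x$ of each term follow from the smoothing property recalled after \eqref{est} (namely $R_{s,r}\psi\in C^1_b$ for $\psi\in\mathcal{B}_b$) together with the gradient bound \eqref{est12}; continuity in $\tau$ comes from the continuity of $R_{\cdot,r}\psi(x)$ on $[0,r)$ stated in the preliminaries, applied to the kernel $R_{t-\tau,t-r}$, plus dominated convergence for the integral term using the integrable singularity $(r'-r)^{-\gamma}$ with $\gamma<1$.

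Next comes the core estimate. Applying $\nabla^\top$ to $\left(\mathcal{T}V\right)(\tau,\cdot)$ and using \eqref{est12} on $R_{t-\tau,t}\phi$ and on each $R_{t-\tau,t-r}(\cdots)$, together with $\norm{\left\langle B(t-r,\cdot),\nabla^\top V(r,\cdot)\right\rangle}_\infty\le\norm{B}_{0,T}\norm{\partial^1 V(r,\cdot)}_\infty\le\norm{B}_{0,T}\,r^{-\gamma}\norm{V}_{\Lambda^\gamma_1\left(0,t\right]}$, one gets
\[
\tau^\gamma\norm{\left(\mathcal{T}V\right)(\tau,\cdot)}_1\le C\norm{\phi}_\infty+C\norm{B}_{0,T}\norm{V}_{\Lambda^\gamma_1\left(0,t\right]}\,\tau^\gamma\int_0^\tau\frac{dr}{(\tau-r)^\gamma r^\gamma}.
\]
The Beta-integral $\int_0^\tau(\tau-r)^{-\gamma}r^{-\gamma}\,dr=\tau^{1-2\gamma}B(1-\gamma,1-\gamma)$ gives $\tau^\gamma\cdot\tau^{1-2\gamma}=\tau^{1-\gamma}\le t^{1-\gamma}$, which is finite and small for $t$ small. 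The same computation applied to $\mathcal{T}V_1-\mathcal{T}V_2$ (the $\phi$-term cancels) yields a contraction constant $\kappa(t)=C\norm{B}_{0,T}B(1-\gamma,1-\gamma)\,t^{1-\gamma}$. For $t$ small enough that $\kappa(t)<1$, Banach's fixed-point theorem gives a unique solution on $(0,t]$; to reach an arbitrary $t\le T$ I would either iterate the argument on successive subintervals of length $\delta$ with $\kappa(\delta)<1$ and glue using the Chapman–Kolmogorov identity \eqref{CK} (rewriting \eqref{Kolm_mild} on $[t-\delta,t]$ first, then propagating backward), or simply replace the ordinary norm by the equivalent weighted norm $\sup_\tau e^{-\lambda\tau}\tau^\gamma\norm{V(\tau,\cdot)}_1$ with $\lambda$ large, which kills the $\norm{B}_{0,T}$ factor directly and gives a global contraction in one shot.

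The main obstacle is bookkeeping the two-sided singularity in the convolution — one factor $(\tau-r)^{-\gamma}$ from the gradient estimate on the outer semigroup and one factor $r^{-\gamma}$ from the $\Lambda^\gamma_1$-norm of $V$ — and confirming it is integrable, which forces precisely the hypothesis $\alpha>\tfrac12$ i.e. $\gamma=\tfrac1{2\alpha}<1$ (and in fact $2\gamma<2$ so $1-2\gamma>-1$, keeping the Beta integral convergent). A secondary technical point is the $\tau$-continuity of $\mathcal{T}V$ up to, and the behaviour near, $\tau=0$: one must verify $\tau^\gamma\norm{(\mathcal{T}V)(\tau,\cdot)}_1$ stays bounded as $\tau\downarrow0$ (it does, since the bound above is uniform) without claiming continuity at $0$ itself, consistent with the open-interval convention $\Lambda^\gamma_1\left(0,t\right]$ adopted in the statement. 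Everything else — measurability, the $C^1_b$ membership, linearity of the map $V\mapsto\mathcal{T}V-R_{\cdot,t}\phi$ — is routine.
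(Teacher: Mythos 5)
Your proposal is correct and follows essentially the same route as the paper: the operator $\mathcal{T}$ is exactly the paper's map $\Gamma_1$ in \eqref{Gamma}, the Beta--type convolution bound is the paper's estimate \eqref{integrala}, and the extension to all of $\left[0,t\right]$ by iterating on subintervals of fixed length and gluing via the Chapman--Kolmogorov identity is precisely how the paper concludes (the key point being that the contraction constant in \eqref{contr} is independent of the terminal datum). Your alternative weighted--norm variant is a valid shortcut not used in the paper, but the argument is otherwise the same.
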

	\begin{proof}
		Let us fix $\phi\in\mathcal{B}_b\left(\mathbb{R}^N\right),\,t\in\left(0,T\right],\,\bar{s}\in\left(0,t\right]$ and introduce the map $\Gamma_1\colon \Lambda_1^\gamma\left(0,\bar{s}\right]\to \Lambda_1^\gamma{\left(0,\bar{s}\right]}$ given by 
		\begin{equation}\label{Gamma}
			\Gamma_1 V\left(s,x\right)=  R_{t-s,t}\phi\left(x\right)+\int_{t-s}^{t} R_{t-s,r}\left(\left\langle B\left(r,\cdot\right), \nabla^\top V\left(t-r,\cdot\right)\right\rangle\right) \left(x\right)dr,\quad 0< s\le \bar{s},\,x\in\mathbb{R}^N,
		\end{equation}
		for every $V\in \Lambda^\gamma_1\left(0,{\bar{s}}\right]$. Notice that such an application is well defined and with values in $\Lambda_1^\gamma\left(0,{\bar{s}}\right]$, thanks to the properties of $R$ discussed above, the {dominated convergence theorem} and  the next computations based on \eqref{est12}:
		\begin{multline}\label{frac_est}
			\!\!\!\sup_{x\in\mathbb{R}^N}\!\left|\int_{t-s}^{t} \partial_{x_j} R_{t-s,r}\left(\left\langle B\left(r,\cdot\right), \nabla^\top V\left(t-r,\cdot\right)\right\rangle\right) \!\left(x\right)dr\right|
			\le\! N
			C \norm{B}_{0,T}
			\norm{V}_{\Lambda^\gamma_1\left(0,{\bar{s}}\right]}
			\int_{t-s}^{t}\frac{dr}{\left(r-\left(t-s\right)\right)^\gamma\left(t-r\right)^\gamma}
			\\\le
			\frac{4^\gamma}{1-\gamma} NC_{}\norm{B}_{0,T}\norm{V}_{\Lambda_1^\gamma\left(0,{\bar{s}}\right]}s^{1-2\gamma},\quad 0< s \le\bar{s},\,j=1,\dots,N.
		\end{multline}
		Here $C=C\left(\alpha,A,Q\right)>0$ is the same constant as in \eqref{est12}, and the last inequality is obtained using the  bound
		\begin{multline}\label{integrala}
			\int_{t-s}^{t}\frac{dr}{\left(r-\left(t-s\right)\right)^\gamma\left(t-r\right)^\gamma}
			=\left\{\int_{t-s}^{t-\frac{s}{2}}+\int_{t-\frac{s}{2}}^{t}\right\}\frac{dr}{\left(r-\left(t-s\right)\right)^\gamma\left(t-r\right)^\gamma}
			=2\int_{t-s}^{t-\frac{s}{2}}\frac{dr}{\left(r-\left(t-s\right)\right)^\gamma\left(t-r\right)^\gamma}\\
			\le\frac{2}{1-\gamma}\left(\frac{2}{s}\right)^\gamma\left(\frac{s}{2}\right)^{1-\gamma}=\frac{4^\gamma}{1-\gamma}s^{1-2\gamma},
		\end{multline}
		where for the second equality we perform the substitution $u=2t-s-r$.
		Estimates similar to those in \eqref{frac_est} allow to write, for every  $V_1,\,V_2\in\Lambda^\gamma_1\left(0,{\bar{s}}\right]$,
		\begin{multline*}
			\sup_{x\in\mathbb{R}^N}\left|\left(\Gamma_1V_1-\Gamma_1V_2\right)\left(s,x\right)\right|+ \sup_{x\in\mathbb{R}^N}\left|\partial_{x_j}\left(\Gamma_1V_1-\Gamma_1V_2\right)\left(s,x\right)\right|
			\\\le\frac{4^\gamma}{1-\gamma}N
			\norm{B}_{0,T}\left(s^{1-\gamma}+ C_{} s^{1-2\gamma}\right)\norm{V_1-V_2}_{\Lambda_1^\gamma\left(0,{\bar{s}}\right]},\quad 0< s \le \bar{s},\,j=1,\dots,N.
		\end{multline*}
		Hence we obtain
		\begin{equation}\label{contr}
			\norm{\Gamma_1V_1-\Gamma_1V_2}_{\Lambda_1^\gamma\left(0,{\bar{s}}\right]}\le\left[\frac{4^\gamma}{1-\gamma}N
			\norm{B}_{0,T} \left({\bar{s}}+C_{}{\bar{s}}^{1-\gamma}\right)\right] \norm{V_1-V_2}_{\Lambda^\gamma_1\left(0,{\bar{s}}\right]}.
		\end{equation}
		This shows that, for ${\bar{s}}$ sufficiently small, the map $\Gamma_1$ is a contraction in $\Lambda^\gamma_1\left(0,{\bar{s}}\right]$: we denote by $\widebar{V}_1$ its unique fixed point.  Now define 
		\begin{equation}\label{u}
			u^\phi_s\left(t,x\right)= R_{s,t}\phi\left(x\right)+\int_{s}^{t}R_{s,r}\left(\left\langle B\left(r,\cdot\right), \nabla^\top\widebar{V}_1\left(t-r,\cdot\right)\right\rangle\right)\left(x\right)dr,\quad t-{\bar{s}}\le s\le t,\,x\in\mathbb{R}^N,
		\end{equation}
		and notice that $ u_{t-s}^\phi\left(t,x\right)=\widebar{V}_1\left(s,x\right),\,0< s \le \bar{s} ,\,x\in\mathbb{R}^N.$
		Therefore $u_\diamond^\phi\left(t,\cdot\right)$ is the unique, local  solution of \eqref{Kolm_mild} (in the strip $\left[t-\bar{s},t\right]\times \mathbb{R}^N$) such that  $u_{t-\diamond}^\phi\left(t,\cdot\right)\in\Lambda^\gamma_1\left(0,{\bar{s}}\right].$
		
		At this point, we can repeat the same procedure to construct the solution of \eqref{Kolm_mild} in the interval $\left[t-2\bar{s},t-{\bar{s}}\right]$, because the relation among constants in \eqref{contr} --which is necessary to get a contraction-- does not depend on the initial condition. Specifically, we take  ${\phi}_1= u_{t-\bar{s}}^\phi\left(t,\cdot\right)\in C_b^1\left(\mathbb{R}^N\right)$  and define the map
		\begin{equation*}
			\Gamma_2 V\left(s,x\right)=  R_{t-s,t-\bar{s}}\,{\phi}_1\left(x\right)+\int_{t-s}^{t-\bar{s}} R_{t-s,r}\left(\left\langle B\left(r,\cdot\right), \nabla^\top V\left(t-r,\cdot\right)\right\rangle\right) \left(x\right)dr,\quad \bar{s}\le s\le 2\bar{s},\,x\in\mathbb{R}^N,
		\end{equation*} 
		for every $V\in\Lambda_1^\gamma\left[\bar{s},2\bar{s}\right]$. Computations analogous to the ones in the previous step show that $\Gamma_2\colon\Lambda^\gamma_1\left[\bar{s},2\bar{s}\right]\to\Lambda^\gamma_1\left[\bar{s},2\bar{s}\right]$ is a contraction: its unique fixed point is denoted by $\widebar{V}_2$. Then we call
		\[
		u_s^{{\phi}_1}\left(t-\bar{s},x\right)= R_{s,t-\bar{s}}{\phi}_1\left(x\right)+\int_{s}^{t-\bar{s}}R_{s,r}\left(\left\langle B\left(r,\cdot\right), \nabla^\top \widebar{V}_2\left(t-r,\cdot\right)\right\rangle\right)\left(x\right)dr,\quad t-2{\bar{s}}\le s\le t-{\bar{s}},\,x\in\mathbb{R}^N;
		\]
		notice that $u^{{\phi}_1}_{t-{s}}\left(t-\bar{s},x\right)=\widebar{V}_2\left(s,x\right),\,\bar{s}\le s\le2\bar{s},\,x\in\mathbb{R}^N$, and that by the definition of ${\phi}_1$, one has $u^{{\phi}_1}_{t-\bar{s}}\left(t-\bar{s},\cdot\right)=u^{\phi}_{t-\bar{s}}\left(t,\cdot\right)$. Now we extend the function $u^\phi_s\left(t,x\right)$ in \eqref{u} assigning
		\[
		\widebar{u}_s^\phi\left(t,x\right)= \begin{cases}
			u^\phi_s\left(t,x\right),&t-\bar{s}\le s\le t\\ 
			u_s^{{\phi}_1}\left(t-\bar{s},x\right),&t-2\bar{s}\le s\le t-\bar{s}
		\end{cases},\quad x\in\mathbb{R}^N.
		\]
		By the Chapman--Kolmogorov equations and Fubini's theorem we realize that $\widebar{u}^\phi_\diamond\left(t,\cdot\right)$ is the unique local solution of \eqref{Kolm_mild} (in the strip $\left[t-2\bar{s},t\right]\times \mathbb{R}^N$) such that $\widebar{u}^\phi_{t-\diamond}\left(t,\cdot\right)\in\Lambda_1^\gamma\left(0,2\bar{s}\right].$ In the sequel, we can simply denote it by $u^\phi_\diamond\left(t,\cdot\right)$.
		
		This argument by steps of lenght $\bar{s}$ can be repeated iteratively to cover the whole interval $\left[0,t\right]$ and obtain the unique, global solution $u^\phi_\diamond\left(t,\cdot\right)$ of \eqref{Kolm_mild} such that $ u^{\phi}_{t-\diamond}\left(t,\cdot\right)\in\Lambda^\gamma_1\left(0,t\right]$. Thus, the proof is complete.
	\end{proof}
	If $\phi\in C_b^1\left(\mathbb{R}^N\right)$, then recalling \eqref{explica} one can directly write  $\nabla R_{s,t}\phi\left(x\right)=\mathbb{E}\left[\nabla\phi\left(Z^{s,x}_t\right)\right]e^{\left(t-s\right)A}$. Next, considering that $\left|e^{\left(t-s\right)A}\right|\le 1, \, 0\le s \le t\le T$, an application of \eqref{no_bel}-\eqref{est12} shows that $R_{s,t}\phi\in C_b^2\left(\mathbb{R}^N\right),$ with 
	\begin{equation*}
		\norm{\partial^2R_{s,t}\phi}_\infty\le C_{}\norm{\partial^1 \phi}_\infty\frac{1}{\left(t-s\right)^\gamma},\quad 0\le s <t\le T,
	\end{equation*}
where $C=C\left(\alpha,A,Q\right)>0$ is the same constant as in \eqref{est12}.  This argument can be iterated to claim that, given an integer $n\ge2$ and $\phi\in C_b^{n-1}\left(\mathbb{R}^N\right)$, $R_{s,t}\phi\in C_b^n\left(\mathbb{R}^N\right)$ and 
\begin{equation}\label{he}
		\norm{\partial^{n}R_{s,t}\phi}_\infty\le C_{}\norm{\partial^{n-1} \phi}_\infty\frac{1}{\left(t-s\right)^\gamma},\quad 0\le s <t\le T.
\end{equation}
	The previous consideration allows to extend Lemma \ref{well_pos}. To this purpose,  for an integer $n\ge 2$ and  $0<t_1<t_2\le T$ we introduce the Banach space $\left(\Lambda_n^\gamma{\left[t_1,t_2\right]}, \norm{V}_{\Lambda^\gamma_n{\left[t_1,t_2\right]}}\right)$ defined by 
	\begin{align*}
		&\Lambda_n^\gamma{\left[t_1,t_2\right]}=\Big\{V\colon \left[t_1,t_2\right]\times \mathbb{R}^N\to\mathbb{R} \text{ measurable}:  V\left(\cdot,x\right)\in C\left(\left[t_1,t_2\right]\right),\,x\in\mathbb{R}^N;\\
		&\qquad\qquad\qquad\qquad\qquad\qquad\qquad\qquad\qquad\quad		
		 V\left(s,\cdot\right)\in C_b^n\left(\mathbb{R}^N\right),\,s\in\left[t_1,t_2\right];\,\sup_{s\in\left[t_1,t_2\right]}s^\gamma\norm{V\left(s,\cdot\right)}_n<\infty\Big\},\\
		&\norm{V}_{\Lambda^\gamma_n{\left[t_1,t_2\right]}}= \sup_{s\in\left[t_1,t_2\right]}s^\gamma\norm{V\left(s,\cdot\right)}_n,\text{ where }\norm{V\left(s,\cdot\right)}_n=\norm{V\left(s,\cdot\right)}_{\infty}+\sum_{j=1}^{n}\norm{\partial^j V\left(s,\cdot\right)}_\infty.
	\end{align*}
	As we have done before, when $t_1=0$ we remove the left--end point of $\left[t_1,t_2\right].$
	\begin{corollary}\label{cor2}
		Let $\alpha\in\left(\frac{1}{2},1\right),\,n\ge 2$ be an integer and $B\in C_b^{0,n-1}\left(\left[0,T\right]\times \mathbb{R}^N;\mathbb{R}^N\right)$. Then for every $\phi\in{C}_b^{n-1}\left(\mathbb{R}^N\right)$ and $0<t\le T$, there exists a unique solution $u^\phi_s\left(t,x\right),\,s\in\left[0,t\right],\,x\in\mathbb{R}^N,$ of \eqref{Kolm_mild} such that $
		u^{\phi}_{t-\diamond}\left(t,\cdot\right)\in\Lambda^\gamma_n\left(0,t\right],$ where $\gamma=1/\left(2\alpha\right)$.
	\end{corollary}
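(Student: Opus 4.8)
The plan is to reproduce the Banach fixed--point argument from the proof of Lemma~\ref{well_pos} almost verbatim, working now in the space $\Lambda_n^\gamma\left(0,\bar s\right]$ and using the higher--order smoothing estimate \eqref{he} in place of the gradient bound \eqref{est12}. Two facts about $R$ will carry the argument: for $\psi\in C_b^{n-1}\left(\mathbb{R}^N\right)$ one has $\norm{\partial^j R_{s,t}\psi}_\infty\le\norm{\partial^j\psi}_\infty$ for every $0\le j\le n-1$ (iterate $\nabla R_{s,t}\psi\left(x\right)=\mathbb E\left[\nabla\psi\left(Z^{s,x}_t\right)\right]e^{\left(t-s\right)A}$ and recall $\left|e^{\left(t-s\right)A}\right|\le1$), whereas the $n$--th derivative is controlled only through the integrable singularity $\norm{\partial^n R_{s,t}\psi}_\infty\le C\norm{\partial^{n-1}\psi}_\infty\left(t-s\right)^{-\gamma}$ of \eqref{he}. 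So I would fix $\phi\in C_b^{n-1}\left(\mathbb{R}^N\right)$, $t\in\left(0,T\right]$, a small $\bar s\in\left(0,t\right]$, and keep the very same operator $\Gamma_1$ defined in \eqref{Gamma}.

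First I would verify that $\Gamma_1$ maps $\Lambda_n^\gamma\left(0,\bar s\right]$ into itself. Continuity of $s\mapsto\Gamma_1V\left(s,x\right)$ follows, as for $n=1$, from the continuity of $R_{\cdot,t}\phi\left(x\right)$ and dominated convergence. For the free term, iterating the two facts above from $\phi\in C_b^{n-1}$ gives $R_{t-s,t}\phi\in C_b^n\left(\mathbb{R}^N\right)$ with $\sup_{0<s\le\bar s}s^\gamma\norm{R_{t-s,t}\phi\left(\cdot\right)}_n<\infty$, hence $R_{t-\diamond,t}\phi\in\Lambda_n^\gamma\left(0,\bar s\right]$. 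For the integral term, with $g_r\coloneqq\left\langle B\left(r,\cdot\right),\nabla^\top V\left(t-r,\cdot\right)\right\rangle$, the Leibniz rule shows that $V\left(t-r,\cdot\right)\in C_b^n$ and $B\left(r,\cdot\right)\in C_b^{n-1}$ imply $g_r\in C_b^{n-1}$ with $\norm{g_r}_{n-1}\le c_{n,N}\norm{B}_{0,T,\,n-1}\norm{V\left(t-r,\cdot\right)}_n$, where $\norm{B}_{0,T,\,n-1}=\sup_{r\in\left[0,T\right]}\left(\norm{B\left(r,\cdot\right)}_\infty+\sum_{j=1}^{n-1}\norm{\partial^j B\left(r,\cdot\right)}_\infty\right)$. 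Differentiating $\int_{t-s}^tR_{t-s,r}\left(g_r\right)dr$ under the integral sign and distributing the derivatives -- the first $n-1$ land on $R_{t-s,r}\left(g_r\right)$ harmlessly, the $n$--th costs a factor $\left(r-\left(t-s\right)\right)^{-\gamma}$ by \eqref{he} -- together with $\norm{V\left(t-r,\cdot\right)}_n\le\left(t-r\right)^{-\gamma}\norm{V}_{\Lambda_n^\gamma\left(0,\bar s\right]}$ and the elementary bound \eqref{integrala}, this yields $\Gamma_1V\in\Lambda_n^\gamma\left(0,\bar s\right]$.

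Since $g_r$ is linear in $V$, the same computation applied to $V_1-V_2$ (the free term cancels) gives, in parallel with \eqref{contr},
\[
\norm{\Gamma_1V_1-\Gamma_1V_2}_{\Lambda_n^\gamma\left(0,\bar s\right]}\le c_{n,N}\left(\bar s+C\,\bar s^{1-\gamma}\right)\norm{B}_{0,T,\,n-1}\,\norm{V_1-V_2}_{\Lambda_n^\gamma\left(0,\bar s\right]},
\]
with a multiplicative constant independent of the initial datum. Choosing $\bar s$ small makes $\Gamma_1$ a contraction; inserting its fixed point $\widebar{V}_1$ into \eqref{u} produces the unique local solution of \eqref{Kolm_mild} on $\left[t-\bar s,t\right]$ with $u^\phi_{t-\diamond}\left(t,\cdot\right)\in\Lambda_n^\gamma\left(0,\bar s\right]$. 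As the threshold for $\bar s$ does not depend on the initial condition and $u^\phi_{t-\bar s}\left(t,\cdot\right)=\widebar{V}_1\left(\bar s,\cdot\right)\in C_b^n\subset C_b^{n-1}$, I would iterate over consecutive strips of length $\bar s$ -- on spaces $\Lambda_n^\gamma\left[t_1,t_2\right]$ with $t_1>0$, where the weight $s^\gamma$ is harmless -- and glue the pieces via the Chapman--Kolmogorov equations \eqref{CK} and Fubini's theorem, exactly as in the proof of Lemma~\ref{well_pos}, reaching the unique global solution on $\left[0,t\right]$ with $u^\phi_{t-\diamond}\left(t,\cdot\right)\in\Lambda_n^\gamma\left(0,t\right]$.

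The only genuinely new point, and the one I expect to demand care, is the treatment of the nonlinear term $\left\langle B,\nabla^\top V\right\rangle$: one has to carry out the Leibniz expansion and split the $n$ derivatives correctly between $B\cdot\nabla V$, which already supplies $n-1$ bounded derivatives, and the single additional derivative furnished by the smoothing of $R$ at the price of the integrable singularity $\left(r-\left(t-s\right)\right)^{-\gamma}$. Everything else is a line--by--line transcription of the case $n=1$ already settled in Lemma~\ref{well_pos}.
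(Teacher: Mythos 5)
Your proposal is correct and follows essentially the same route as the paper: the same contraction map $\Gamma_1$ on $\Lambda_n^\gamma\left(0,\bar s\right]$, with the higher--order smoothing bound \eqref{he} replacing \eqref{est12}, the Leibniz expansion of $\left\langle B\left(r,\cdot\right),\nabla^\top V\left(t-r,\cdot\right)\right\rangle$ producing a combinatorial constant (the paper's $C_n$, your $c_{n,N}$), the double singularity handled by \eqref{integrala}, and the same strip--by--strip extension. No gaps.
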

	\begin{proof}
		Take an integer $n\ge 2$; the argument parallels the one in the proof of Lemma \ref{well_pos}, so here we only show that, for a given $\phi\in C_b^{n-1}\left(\mathbb{R}^N\right)$ and $\bar{s}\in\left(0,t\right]$ sufficiently small, the map $\Gamma_1\colon\Lambda_n^\gamma\left(0,\bar{s}\right]\to\Lambda_n^\gamma\left(0,\bar{s}\right]$  in \eqref{Gamma} is well defined and a contraction. First, we note that for every $V\in\Lambda_n^\gamma\left(0,\bar{s}\right]$ and multi--index $\mathbf{j}$ such that $1\le \norm{\mathbf{j}}_1\le n$,
		\[
		\partial_{\mathbf{j}}\Gamma_1V\left(s,x\right)= \partial_{\mathbf{j}} R_{t-s,t}\phi\left(x\right)+\int_{t-s}^{t} \partial_{\mathbf{j}}R_{t-s,r}\left(\left\langle B\left(r,\cdot\right), \nabla^\top V\left(t-r,\cdot\right)\right\rangle\right) \left(x\right)dr,\quad 0< s\le \bar{s},\,x\in\mathbb{R}^N,
		\]
		and that $\sup_{s\in\left(0,\bar{s}\right]}s^\gamma \norm{\partial^{\norm{\mathbf{j}}_1}R_{t-s,t}\phi}_\infty<\infty$ by \eqref{he}. Secondly,  invoking the estimates in \eqref{integrala} and \eqref{he}, for every $ 0< s \le\bar{s},$
		\begin{multline*}
			\sup_{x\in\mathbb{R}^N}\left|\int_{t-s}^{t} \partial_{\mathbf{j}}R_{t-s,r}\left(\left\langle B\left(r,\cdot\right), \nabla^\top V\left(t-r,\cdot\right)\right\rangle\right) \left(x\right)dr\right|\\ 
			\le N C_nC\norm{B}_{n-1,T}
			\norm{V}_{\Lambda^\gamma_n\left(0,{\bar{s}}\right]}
			\int_{t-s}^{t}\frac{dr}{\left(r-\left(t-s\right)\right)^\gamma\left(t-r\right)^\gamma}\\
			\le
			\frac{4^\gamma}{1-\gamma} NC_n C_{}\norm{B}_{n-1,T}\norm{V}_{\Lambda_n^\gamma\left(0,{\bar{s}}\right]}s^{1-2\gamma},\quad C_n=\binom{n-1}{\left[2^{-1}\left(n-1\right)\right]},
		\end{multline*}
		where $\norm{B}_{n-1,T}=\sup_{0\le t\le T}\left(\norm{B\left(t,\cdot\right)}_\infty+\sum_{j=1}^{n-1}\norm{\partial^jB\left(t,\cdot\right)}_\infty\right)$ and $C=C\left(\alpha,A,Q\right)>0$ is the same constant as in \eqref{est12}. It then follows that $\Gamma_1V\in\Lambda_n^\gamma\left(0,\bar{s}\right]$, with 
		\begin{equation*}
			\norm{\Gamma_1V_1-\Gamma_1V_2}_{\Lambda_n^\gamma\left(0,{\bar{s}}\right]}\le\left[\frac{4^\gamma}{1-\gamma}N
			\norm{B}_{n-1,T} \left(\bar{s}+ nC_nC_{}{\bar{s}}^{1-\gamma}\right)\right] \norm{V_1-V_2}_{\Lambda^\gamma_n\left(0,{\bar{s}}\right]},\quad V_1,V_2\in\Lambda_n^\gamma\left(0,{\bar{s}}\right],
		\end{equation*}
		which reduces to \eqref{contr} when $n=1$ and proves the contraction property of $\Gamma_1$ for $\bar{s}$ small enough.
	\end{proof}
\section{The time--dependent Markov transition semigroup}\label{a_conn}
	Let $\alpha\in\left(0,1\right)$ and introduce a vector field $B_0\colon \left[0,T\right]\times \mathbb{R}^N\to \mathbb{R}^N$ such that $B_0\in C_b^{0,1}\left(\left[0,T\right]\times \mathbb{R}^N;\mathbb{R}^N\right)$. For every $x\in\mathbb{R}^N$ and $0\le s \le T$, we define the process $X^{s,x}=\left(X^{s,x}_t\right)_{t\in\left[s,T\right]}$ to be  the unique (up to indistinguishability) solution of the semilinear stochastic differential equation 
	\begin{equation}\label{st_semi}
		dX^{s,x}_t=\left(AX_{t}^{s,x}+B_0\left(t,X_t^{s,x}\right)\right)dt+\sqrt{Q}\,dW_{L_t},\quad X_s^{s,x}=x\in\mathbb{R}^N.
	\end{equation}
	We denote by $P=\left(P_{s,t}\right),\,0\le s \le t \le T$, the corresponding time--dependent Markov transition semigroup given by
	\[
	P_{s,t}\phi=\mathbb{E}\left[\phi\left(X^{s,\cdot}_t\right)\right],\quad \phi\in\mathcal{B}_b\left(\mathbb{R}^N\right).
	\]
	The connection between the SDE in \eqref{st_semi} and the Kolmogorov backward equation in mild integral form \eqref{Kolm_mild}  is provided by the next, fundamental result.
	\begin{theorem}\label{connection}
		Let $\alpha\in\left(\frac{1}{2},1\right),\,B_0\in C_b^{0,3}\left(\left[0,T\right]\times \mathbb{R}^N;\mathbb{R}^N\right),\,f\in C\left(\left[0,T\right];\mathbb{R}^N\right)$ and define $B= B_0-f$. Then, for every $\phi\in\mathcal{B}_b\left(\mathbb{R}^N\right)$ and $0<t\le T$, the function $P_{s,t}\phi\left(x\right),\,0\le s \le t,\,x\in\mathbb{R}^N$, is the unique solution of \eqref{Kolm_mild} such that $ P_{t-\diamond,t}\phi\left(\cdot\right)\in\Lambda_1^\gamma\left(0,t\right],$ where $\gamma=1/\left(2\alpha\right)$.
	\end{theorem}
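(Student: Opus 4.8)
The plan is to establish the identity $u^\phi_s(t,x)=P_{s,t}\phi(x)$, uniqueness in $\Lambda_1^\gamma(0,t]$ being already provided by Theorem~\ref{well_pos}. I would first treat smooth terminal data and then remove the restriction by approximation. So let $\phi\in C_b^3(\mathbb{R}^N)$. By the classical probabilistic representation for \eqref{st_semi} --- its link with the Kolmogorov backward equation \eqref{i2} recalled in the introduction, valid here because $B_0\in C_b^{0,3}$ --- the function $g_r(y):=P_{r,t}\phi(y)$ is $C^1$ in $r$ and $C_b^3$ in $y$ on $[0,t)\times\mathbb{R}^N$, bounded with spatial derivatives bounded uniformly in $r$, extends continuously to $r=t$ with $g_t=\phi$, and solves $\partial_r g_r+\mathcal L_r g_r=0$ on $[0,t)$, where $\mathcal L_r$ is the generator of $X^{s,x}$ (local drift $Ay+B_0(r,y)$ plus the nonlocal operator of \eqref{i2}). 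Writing $\mathcal A_r$ for the generator of the OU process $Z^{s,x}$ of \eqref{OU} (same nonlocal part, local drift $Ay+f(r)$), one has $\mathcal L_r=\mathcal A_r+\langle B(r,\cdot),\nabla^\top\,\cdot\,\rangle$ because $B=B_0-f$.

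Next I would apply It\^o's formula to $r\mapsto g_r(Z^{s,x}_r)$ on $[s,t]$. Since $Z^{s,x}$ solves \eqref{OU} and $g$ is $C^{1,2}_b$ with the integrability against $\nu$ needed for the nonlocal term (guaranteed by $\alpha<1$, which gives $\int_{|z|\le1}|z|^2\,\nu(dz)<\infty$), one obtains
\[
dg_r(Z^{s,x}_r)=\big[\partial_r g_r+\mathcal A_r g_r\big](Z^{s,x}_r)\,dr+dM_r=-\big\langle B(r,Z^{s,x}_r),\nabla^\top g_r(Z^{s,x}_r)\big\rangle\,dr+dM_r,
\]
with $M$ a true martingale (its jump part is square integrable because $\nabla g$ is bounded). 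Integrating on $[s,t]$ and taking expectations kills $M$, and using $g_s(x)=P_{s,t}\phi(x)$ together with $\mathbb{E}[\phi(Z^{s,x}_t)]=R_{s,t}\phi(x)$ gives $P_{s,t}\phi(x)=R_{s,t}\phi(x)+\mathbb{E}\big[\int_s^t\langle B(r,Z^{s,x}_r),\nabla^\top P_{r,t}\phi(Z^{s,x}_r)\rangle\,dr\big]$. As $B$ and $\nabla^\top P_{r,t}\phi$ are bounded, Fubini and the Markov property \eqref{mse}--\eqref{CK} of $Z$ turn the last term into $\int_s^t R_{s,r}\big(\langle B(r,\cdot),\nabla^\top P_{r,t}\phi(\cdot)\rangle\big)(x)\,dr$, which is exactly \eqref{Kolm_mild}. (Equivalently, differentiate $\rho\mapsto R_{s,\rho}(P_{\rho,t}\phi)(x)$ on $[s,t]$ and integrate, using the forward differentiability of $R$ in its second argument and $\partial_\rho P_{\rho,t}\phi=-\mathcal L_\rho P_{\rho,t}\phi$.) Moreover $P_{\cdot,t}\phi\in\Lambda_1^\gamma(0,t]$: the spatial $C_b^1$ regularity and a uniform bound $\|P_{r,t}\phi\|_1\le c(\phi,B_0,T)$ are immediate, while continuity of $r\mapsto P_{r,t}\phi(x)$ on $[0,t]$ follows exactly as for $R$ in Section~\ref{pre}, from the stochastic continuity of the solution of \eqref{st_semi} in the initial time combined with the continuous mapping and Vitali theorems. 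Hence, by Theorem~\ref{well_pos}, $u^\phi=P_{\cdot,t}\phi$ whenever $\phi\in C_b^3$.

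For general $\phi\in\mathcal B_b(\mathbb{R}^N)$, take $\phi_n\in C_b^3(\mathbb{R}^N)$ with $\sup_n\|\phi_n\|_\infty\le\|\phi\|_\infty$ and $\phi_n\to\phi$ Lebesgue--a.e. For $0\le s<t$ the law of $Z^{s,x}_t$ is absolutely continuous (indeed $I^L_{s,t}$ is a.s. invertible, as used in \eqref{no_bel}), so $R_{s,t}\phi_n(x)\to R_{s,t}\phi(x)$ and, via \eqref{no_bel} and the integrability behind \eqref{est}, also $\nabla^\top R_{s,t}\phi_n(x)\to\nabla^\top R_{s,t}\phi(x)$; both limits hold by dominated convergence and are dominated uniformly in $n$ by $C\|\phi\|_\infty$ and $C\|\phi\|_\infty(t-s)^{-\gamma}$, cf. \eqref{est12}. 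Writing the fixed--point construction of Theorem~\ref{well_pos} as a Neumann--type series $u^\psi=\sum_{k\ge0}S^k[R_{\cdot,t}\psi]$, with $S[V]_s(t,x)=\int_s^t R_{s,r}(\langle B(r,\cdot),\nabla^\top V_r(t,\cdot)\rangle)(x)\,dr$ (convergent in $\Lambda_1^\gamma$ on a short interval, then glued as in the proof of Theorem~\ref{well_pos}), one checks by induction on $k$ --- using once more \eqref{no_bel} and the estimate \eqref{integrala} controlling $\int_s^t(r-s)^{-\gamma}(t-r)^{-\gamma}\,dr$ --- that each $S^k[R_{\cdot,t}\phi_n]_s(t,x)$ and its spatial gradient converge pointwise to those of $S^k[R_{\cdot,t}\phi]_s(t,x)$. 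Since the contraction constant, hence the tail of the series, is bounded uniformly in $n$ (it depends only on $\|B\|_{0,T}$, $\gamma$ and the interval length, cf. \eqref{contr}), a $3\varepsilon$ argument yields $u^{\phi_n}_s(t,x)\to u^\phi_s(t,x)$ for every $s\in[0,t]$. On the other hand $u^{\phi_n}_s(t,x)=P_{s,t}\phi_n(x)$ by the previous step, and $P_{s,t}\phi_n(x)\to P_{s,t}\phi(x)$ for $s<t$ because the law of $X^{s,x}_t$ is absolutely continuous (a regularization result of this section), while at $s=t$ both sides equal $\phi(x)$ since $R_{t,t}=\text{Id}$. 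Therefore $u^\phi_s(t,x)=P_{s,t}\phi(x)$ on $[0,t]\times\mathbb{R}^N$, and $P_{t-\diamond,t}\phi\in\Lambda_1^\gamma(0,t]$ because $u^\phi$ is, by Theorem~\ref{well_pos}.

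The conceptual heart is the It\^o computation above; the technically delicate point is the last step --- passing to the limit $\phi_n\to\phi$ inside the gradient term $\langle B,\nabla^\top u^{\phi_n}\rangle$ when $\phi$ is only bounded measurable. This is precisely where the regularization--by--noise tools are indispensable: the Bismut--Elworthy--Li--type formula \eqref{no_bel}, the gradient bound \eqref{est12}, the absolute continuity of the transition laws, and the uniform--in--$n$ contraction structure of Theorem~\ref{well_pos}; one cannot control $\nabla^\top P_{r,t}\phi_n$ through $\nabla\phi_n$, which does not converge.
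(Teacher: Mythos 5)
Your first step (for $\phi\in C_b^3$) is sound and is essentially the paper's argument in a different guise: the paper does not apply It\^o's formula to $g_r(Z^{s,x}_r)$ directly but differentiates $r\mapsto R_{s,r}(P_{r,t}\phi)(x)$ by hand, using Theorem \ref{kfb} and Corollary \ref{higher} for exactly the regularity you invoke; the parenthetical alternative you mention is precisely what is done there. Your Neumann--series mechanism for passing to the limit on the $u^{\phi_n}$ side is also a legitimate variant of the paper's treatment (which instead writes $\nabla^\top P_{r,t}\phi_n=\nabla^\top P_{r,u}(P_{u,t}\phi_n)$ and uses the Bismut--Elworthy--Li formula \eqref{bel} for $P$), and it would go through for equibounded sequences converging pointwise \emph{everywhere}.

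The genuine gap is in the final approximation: you take $\phi_n\to\phi$ only Lebesgue--a.e.\ and then assert $P_{s,t}\phi_n(x)\to P_{s,t}\phi(x)$ ``because the law of $X^{s,x}_t$ is absolutely continuous (a regularization result of this section).'' No such result is in the paper: what Section \ref{a_conn} proves (Lemma \ref{bou}) is the strong Feller property $P_{s,t}\colon\mathcal{B}_b\to C_b^1$, which does not imply that $P_{s,t}(x,\cdot)\ll \mathrm{Leb}$. Nor can absolute continuity be obtained cheaply here: a Girsanov transfer from the OU law (which \emph{is} absolutely continuous, since $I^L_{s,t}$ is a.s.\ invertible) fails because for a pure--jump $2\alpha$--stable driver an equivalent change of measure cannot absorb the drift $B_0$ — laws of stable processes with different drifts and identical L\'evy measures are mutually singular. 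Existence of a density for $X^{s,x}_t$ is true but requires a separate nontrivial argument (parametrix-type), so as written this step does not close. The same issue, more mildly, affects your use of \eqref{no_bel} in the base case of the Neumann series, though there the conditional Gaussianity of $Z^{s,x}_t$ does save you. The repair is exactly the paper's route: restrict first to $\phi=1_U$ with $U$ open, approximate by Urysohn functions $\phi_n\uparrow\phi$ \emph{pointwise everywhere} (so that $P_{s,t}\phi_n\to P_{s,t}\phi$ by plain dominated convergence, no density needed), run your limit argument on the $u$ side, and then upgrade to all of $\mathcal{B}_b(\mathbb{R}^N)$ by the functional monotone class theorem, observing that the limit passage only uses equiboundedness and everywhere--pointwise convergence.
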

	The purpose of this section is to develop a self--contained procedure which is specific to our framework and allows to prove Theorem \ref{connection} via important, preliminary results.  In the case of time--independent nonlinearities and $f\equiv0$ (hence for  Kolmogorov  forward equations in mild form), Theorem \ref{connection} is known for noises different from our $W_L$. As regards independent $\alpha-$stable Lévy processes in finite dimension, it has been established in \cite[Lemma $5.12$]{PZ} (its proof relies on the theory of one--parameter semigroups, so it cannot be adapted to our framework). As for Brownian motions in infinite dimension, we refer to \cite[Theorem $9.27$]{DPZ}.
	
	Let $\alpha\in\left(0,1\right), \,B_0\in C_b^{0,1}\left(\left[0,T\right]\times \mathbb{R}^N; \mathbb{R}^N\right)$ and recall that the subordinated Brownian motion $W_L$ is an isotropic (i.e., rotation--invariant), $2\alpha-$stable, $\mathbb{R}^N-$valued Lévy process with compensator $\nu\left(dz\right)\asymp \left|z\right|^{-\left(N+2\alpha\right)}dz$ and no continuous martingale part (see \cite[Theorem $30.1$]{Sato}). Here $\asymp$ denotes the equality up to a positive multiplicative constant. By \cite[Theorem $3.1$]{priola} (see also \cite{pr1}) there is a sharp stochastic flow $X^{s,x}_t$ generated by the SDE \eqref{st_semi} which is jointly measurable in $\left(s,t,x,\omega\right)$ and, $\mathbb{P}-$a.s., simultaneously continuous in $x$ and càdlàg in $s$ and $t$. More specifically, there exists an almost--sure event $\Omega'$ such that the following facts hold true for every $\omega\in\Omega'$:
	\begin{itemize}
		\item for every $x\in\mathbb{R}^N$ and $t\in\left[0,T\right]$, the mapping $s\mapsto X^{s,x}_t\left(\omega\right)$ is càdlàg in $\left[0,t\right]$;
		\item for every $x\in\mathbb{R}^N$ and  $s\in\left[0,T\right]$, the mapping $t\mapsto X^{s,x}_t\left(\omega\right)$ is càdlàg in $\left[s,T\right]$;
		\item for every $0\le s\le t\le T$, the mapping $x\mapsto X^{s,x}_t\left(\omega\right)$ is continuous in $\mathbb{R}^N$;
		\item the flow property is satisfied, namely $X^{s,x}_t\left(\omega\right)=X_t^{r,X^{s,x}_r\left(\omega\right)}\left(\omega\right)$ for every $x\in\mathbb{R}^N,\,0\le s <r<t\le T$;
		\item for every $x\in\mathbb{R}^N$ and $0\le s \le t\le T$, $X^{s,x}_t\left(\omega\right)=x+\int_{s}^{t}\left(AX^{s,x}_r\left(\omega\right)+B_0\left(r,X^{s,x}_r\left(\omega\right)\right)\right)dr+ \sqrt{Q}\left(W_{L_t}-W_{L_s}\right)\left(\omega\right)$.
	\end{itemize}
	For every $\omega\in\Omega\setminus\Omega'$, we set $X^{s,x}_t\left(\omega\right)=x,\, \left(s,\,t\right) \in\left[0,T\right]^2,\,x\in\mathbb{R}^N$: from now on, we work with such a stochastic flow $X^{s,x}_t$. The next result shows that, under additional regularity requirements on $B_0$, it is differentiable with respect to $x$. Analogous claims concerning differentiability of stochastic flows can be found in literature in, e.g.,  \cite[Theorem $8.18$]{dps} for the Brownian case and in \cite[Theorem $3.4.2$]{Ku19} for the jumps one, although the latter requires regularity assumptions on the coefficients which are not fulfilled by our framework. The proof, which carries out a path--by--path argument thanks to the already mentioned properties guaranteed by \cite{priola}, is postponed to \ref{appendix}.
	\begin{lemma}\label{hoc}
		Let $\alpha\in\left(0,1\right),\, n\ge 2$ be an integer and $B_0\in C^{0,n}_b\left(\left[0,T\right]\times \mathbb{R}^N;\mathbb{R}^N\right)$. Then for every $\omega\in\Omega$ and $0\le s\le t\le T$, the function $x\mapsto X^{s,x}_t\left(\omega\right)$ belongs to $C^n\left(\mathbb{R}^N\right)$,
		and there exists a constant $C>0$ depending only on $A,\,B_0,\,T,\,n$ and $N$ such that
		\begin{equation}\label{boundala}
			\sum_{i=1}^{n}\norm{\partial^iX^{s,\cdot}_t\left(\omega\right)}_\infty\le C,\quad 0\le s \le t \le T,\,\omega\in\Omega.
		\end{equation}
	\end{lemma}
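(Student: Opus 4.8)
The plan is to freeze the path $\omega$, recognise equation \eqref{st_semi} pathwise as a Carath\'eodory ordinary differential equation with c\`adl\`ag inhomogeneity, and then extract both the $C^n$ regularity in $x$ and the uniform bound \eqref{boundala} from the classical theory of differentiable dependence on the initial datum, combined with Gr\"onwall's lemma. For $\omega\in\Omega\setminus\Omega'$ the map $x\mapsto X^{s,x}_t(\omega)=x$ is affine, hence $C^\infty$, with $\partial^1 X^{s,\cdot}_t(\omega)=\text{Id}$ and $\partial^i X^{s,\cdot}_t(\omega)=0$ for $i\ge 2$, so \eqref{boundala} is immediate. Fix then $\omega\in\Omega'$ and $s\in[0,T]$, and set $\psi(r):=\sqrt{Q}\,\bigl(W_{L_r}(\omega)-W_{L_s}(\omega)\bigr)$ for $r\in[s,T]$; being c\`adl\`ag, $\psi$ is bounded and Borel measurable. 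By the last of the properties of the flow $X^{s,x}_t$ listed above (see \cite{priola}), $r\mapsto X^{s,x}_r(\omega)$ solves
\begin{equation}\label{pathwise_ode}
	X^{s,x}_r(\omega)=x+\int_s^r\Bigl(AX^{s,x}_u(\omega)+B_0\bigl(u,X^{s,x}_u(\omega)\bigr)\Bigr)\,du+\psi(r),\qquad r\in[s,T],
\end{equation}
which, after the substitution $Z_r:=X^{s,x}_r(\omega)-\psi(r)$, takes the form $Z_r=x+\int_s^r G(u,Z_u)\,du$ with $G(u,z):=A\bigl(z+\psi(u)\bigr)+B_0\bigl(u,z+\psi(u)\bigr)$.

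The map $G$ is Borel measurable in $u$ and of class $C^n$ in $z$, with $\partial_z G(u,\cdot)=A+DB_0\bigl(u,\cdot+\psi(u)\bigr)$ and $\partial^i_z G(u,\cdot)=D^iB_0\bigl(u,\cdot+\psi(u)\bigr)$ for $2\le i\le n$; in particular $G$ is globally Lipschitz in $z$, uniformly in $u$, and each $\sup_u\norm{\partial^i_z G(u,\cdot)}_\infty$ is bounded by a constant depending only on $A$, $B_0$, $N$ and $i$. The decisive point is that these bounds do \emph{not} depend on $\omega$: the forcing $\psi_\omega$ enters $G$ only through a shift of the argument of $B_0$, so it never affects the spatial derivatives of $G$. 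Hence the integral equation $Z_r=x+\int_s^r G(u,Z_u)\,du$ has a unique global Carath\'eodory solution $Z_\cdot(x)$; since, by \eqref{pathwise_ode}, $r\mapsto X^{s,x}_r(\omega)-\psi(r)$ is such a solution, we get $X^{s,x}_r(\omega)=Z_r(x)+\psi(r)$ for $r\in[s,T]$, and as $\psi$ does not depend on $x$ it suffices to prove that $x\mapsto Z_r(x)$ is $C^n$, uniformly in $r\in[s,T]$, with derivatives bounded as in \eqref{boundala}.

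For this last step $Z_\cdot(x)$ is realised as the fixed point of the contraction $Y\mapsto x+\int_s^\cdot G(u,Y(u))\,du$ on $C([s,T];\mathbb{R}^N)$ endowed with a weighted sup norm, and differentiating the fixed--point identity $i$ times in $x$ shows that $\eta^{(i)}_r:=\partial^i_x Z_r(x)$ solves the $i$--th variational equation
\[
	\eta^{(1)}_r=\text{Id}+\int_s^r\partial_z G(u,Z_u)\,\eta^{(1)}_u\,du,\qquad \eta^{(i)}_r=\int_s^r\Bigl(\partial_z G(u,Z_u)\,\eta^{(i)}_u+\mathcal{R}_i(u)\Bigr)\,du\quad(i\ge 2),
\]
where, by the Fa\`a di Bruno formula, $\mathcal{R}_i(u)$ is a universal polynomial in $\{\partial^j_z G(u,Z_u)\}_{2\le j\le i}$ and $\{\eta^{(j)}_u\}_{1\le j\le i-1}$. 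Gr\"onwall's inequality applied to the equation for $\eta^{(1)}$ bounds $\norm{\eta^{(1)}_r}_\infty$ by a constant of the form $e^{cT}$ with $c$ depending only on $|A|$ and the first spatial derivatives of $B_0$, hence uniformly in $s$, $r$, $x$, $\omega$. Arguing by induction on $i$ from $1$ to $n$: once $\eta^{(1)},\dots,\eta^{(i-1)}$ are bounded by constants depending only on $A$, $B_0$, $T$, $N$ and $i$, the same holds for $\sup_u|\mathcal{R}_i(u)|$, and Gr\"onwall applied to the equation for $\eta^{(i)}$ yields $\norm{\eta^{(i)}_r}_\infty\le C_i(A,B_0,T,N,i)$. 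Taking $r=t$ and summing over $i=1,\dots,n$ gives \eqref{boundala}.

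The argument is elementary; the two places that need care are (a) checking that the theory of differentiable dependence on the initial datum is not disturbed by the drift being merely Borel (indeed c\`adl\`ag) in time, which it is not, precisely because the $C^n$ estimates in the space variable are uniform in time, so the contraction constants and the estimates on the Taylor remainder behind the classical proof go through unchanged; and, more delicately, (b) the bookkeeping in the higher--order variational equations, where one must verify that the path $\psi_\omega$ never contributes a factor depending on $\norm{\psi_\omega}_\infty$ to the coefficient $\partial_z G$ or to the remainders $\mathcal{R}_i$. Point (b) is exactly what makes the constant $C$ in \eqref{boundala} uniform over all of $\Omega$, and I expect it to be the main (though routine) obstacle.
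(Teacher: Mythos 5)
Your proposal is correct and follows essentially the same route as the paper's appendix proof: both arguments are path--by--path, both exploit the fact that the additive forcing $\sqrt{Q}\,(W_{L_\cdot}-W_{L_s})(\omega)$ drops out of the spatial derivatives (you via the explicit substitution $Z_r=X^{s,x}_r(\omega)-\psi(r)$, the paper because the noise cancels when differencing $X^{s,x+\epsilon h}-X^{s,x}$), and both obtain the $\omega$--uniform bound \eqref{boundala} by Gr\"onwall applied inductively to the variational equations, whose coefficients involve only $A$ and the spatial derivatives of $B_0$. The one presentational difference is that where you outsource the existence of the derivatives to the classical Carath\'eodory theory of differentiable dependence on initial data, the paper proves it by hand, showing the incremental ratios $\epsilon^{-1}(D_{\mathbf h}X^{s,x+\epsilon h}_t-D_{\mathbf h}X^{s,x}_t)$ are Lipschitz in $\epsilon$ uniformly in $t$ (using one extra derivative of $B_0$) and invoking the extension theorem for uniformly continuous functions; this is a legitimate citation on your part, not a gap.
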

	The previous  claim implies the following result regarding persistence of regularity.
	\begin{corollary}\label{higher}
		Let $\alpha\in\left(0,1\right),\,n\ge2$ be an integer  and $\phi\in C_b^n\left(\mathbb{R}^N\right)$. If $B_0\in C_b^{0,n}\left(\left[0,T\right]\times \mathbb{R}^N;\mathbb{R}^N\right)$, then for every $0\le s\le t\le T$ the function $P_{s,t}\phi\in C^n_b\left(\mathbb{R}^N\right)$. In addition, 
		\begin{equation}\label{boundala_meglio}
			\sup_{0\le s \le t \le T}\left(\norm{P_{s,t}\phi}_\infty+\sum_{i=1}^n\norm{\partial^i P_{s,t}\phi}_\infty\right)<\infty.
		\end{equation}
	\end{corollary}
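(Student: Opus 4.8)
The strategy is to combine the pointwise differentiability of the stochastic flow $x \mapsto X^{s,x}_t(\omega)$ established in Lemma \ref{hoc} with a dominated-convergence-type argument that transfers this regularity to the expectation $P_{s,t}\phi(x) = \mathbb{E}[\phi(X^{s,x}_t)]$. First I would treat the case $n=1$ as a warm-up: since $\phi \in C^1_b(\mathbb{R}^N)$ and, by Lemma \ref{hoc} (with $n\geq 2$, hence certainly for the first derivative), $x \mapsto X^{s,x}_t(\omega)$ is $C^1$ with $\sup_{\omega,s\leq t}\|\partial^1 X^{s,\cdot}_t(\omega)\|_\infty \leq C$, the chain rule gives, for any direction $h \in \mathbb{R}^N$,
\[
D_h\big(\phi(X^{s,x}_t(\omega))\big) = \nabla\phi(X^{s,x}_t(\omega))\, D_h X^{s,x}_t(\omega),
\]
which is bounded uniformly in $\omega$ by $\|\nabla\phi\|_\infty\, C\, |h|$. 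Hence the difference quotients $\big(\phi(X^{s,x+\epsilon h}_t(\omega)) - \phi(X^{s,x}_t(\omega))\big)/\epsilon$ are bounded uniformly in $\omega$ and converge pointwise, so the dominated convergence theorem lets me differentiate under the expectation and obtain $D_h P_{s,t}\phi(x) = \mathbb{E}[\nabla\phi(X^{s,x}_t)\, D_h X^{s,x}_t]$, with $\|\partial^1 P_{s,t}\phi\|_\infty \leq C\|\partial^1\phi\|_\infty$ uniformly in $0\leq s\leq t\leq T$. Continuity of $x \mapsto D_h P_{s,t}\phi(x)$ follows from a second application of dominated convergence, using continuity of $\nabla\phi$, of $x\mapsto X^{s,x}_t(\omega)$, and of $x\mapsto D_h X^{s,x}_t(\omega)$ (the latter being part of the $C^n$, $n\geq2$, conclusion of Lemma \ref{hoc}).

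For general $n$, I would proceed by induction on the order of differentiation, or equivalently apply the Faà di Bruno formula to $x \mapsto \phi(X^{s,x}_t(\omega))$: any partial derivative $\partial_{\mathbf{j}}$ of order $\|\mathbf{j}\|_1 = k \leq n$ is a finite sum of terms of the form
\[
\big(\partial^{(m)}\phi\big)(X^{s,x}_t(\omega)) \cdot \big(\partial^{i_1} X^{s,x}_t(\omega)\big) \cdots \big(\partial^{i_m} X^{s,x}_t(\omega)\big),
\qquad i_1 + \cdots + i_m = k,\ 1\leq i_\ell,
\]
with combinatorial coefficients depending only on $k$ and $N$. Since $\phi \in C^n_b$ bounds $\|\partial^{(m)}\phi\|_\infty$ for $m\leq n$, and Lemma \ref{hoc} bounds each $\|\partial^{i_\ell} X^{s,\cdot}_t(\omega)\|_\infty$ uniformly in $(s,t,\omega)$ by the constant $C$ (depending on $A, B_0, T, n, N$), every such term is bounded uniformly in $\omega$ by a constant depending only on $\|\phi\|_{C^n}$, $C$, $n$, $N$. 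Differentiating under the expectation $k$ times is then justified exactly as in the $n=1$ case — at each stage the difference quotient of the already-differentiated integrand is dominated by an $\omega$-uniform constant thanks to the next-order bounds in Lemma \ref{hoc} — and continuity of each $\partial_{\mathbf{j}} P_{s,t}\phi$ in $x$ again follows from dominated convergence, the $C^n$-regularity of the flow ensuring that the integrand is jointly continuous in $x$. Collecting the bounds yields
\[
\|P_{s,t}\phi\|_\infty + \sum_{i=1}^n \|\partial^i P_{s,t}\phi\|_\infty \leq C'\Big(\|\phi\|_\infty + \sum_{i=1}^n \|\partial^i\phi\|_\infty\Big),
\qquad 0\leq s\leq t\leq T,
\]
with $C'$ independent of $s,t$, which is exactly \eqref{boundala_meglio}.

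The only genuinely delicate point is the repeated interchange of differentiation and expectation, and this hinges entirely on having the $\omega$-\emph{uniform} bounds \eqref{boundala} of Lemma \ref{hoc} for all derivatives up to order $n$ — without uniformity in $\omega$ the dominating function need not be integrable. Since Lemma \ref{hoc} provides precisely this uniformity (its constant $C$ does not depend on $\omega$, $s$, or $t$), the obstacle dissolves, and the rest is the bookkeeping of the Faà di Bruno expansion together with routine applications of the dominated convergence theorem. I would also remark that the boundedness $\|P_{s,t}\phi\|_\infty \leq \|\phi\|_\infty$ is immediate from $|\phi(X^{s,x}_t)| \leq \|\phi\|_\infty$, so only the derivative bounds require the flow estimates.
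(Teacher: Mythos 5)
Your proof is correct and follows exactly the route the paper intends: the paper states Corollary \ref{higher} as an immediate consequence of Lemma \ref{hoc} without writing out the details, and your argument (Fa\`a di Bruno applied to $\phi\circ X^{s,\cdot}_t(\omega)$, the $\omega$-, $s$-, $t$-uniform bounds \eqref{boundala}, and dominated convergence to differentiate under the expectation) is precisely the bookkeeping being left implicit. No gaps.
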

	Let $D=\left\{z\in\mathbb{R}^N,\, \left|z\right|\le 1 \right\}$; we introduce the family of integro--differential operators $\left(A\left(s\right)\right)_{0\le s\le T}$, defined on every $\psi\in C_b^2\left(\mathbb{R}^N\right)$ by
	\begin{equation}\label{operator}
		A\left(s\right)\psi\left(x\right)=\left\langle Ax+B_0\left(s,x\right), \nabla^\top \psi \left(x\right)\right\rangle + \int_{\mathbb{R}^N}\left[\psi\left(x+\sqrt{Q}z\right)-\psi\left(x\right)-1_D\left(z\right)\nabla \psi\left(x\right)\sqrt{Q}z\right]\nu\left(dz\right),
	\end{equation}
	where $x\in\mathbb{R}^N$. We need the next preparatory result.
	\begin{lemma}\label{fond}
		\begin{enumerate}[label=$\left(\emph{\roman*}\right)$]
			\item\label{poi1} Let $\alpha\in\left(\frac{1}{2},1\right),\,0\le s \le T$ and $x\in\mathbb{R}^N$. If $B_0\in C^{0,1}_b\left(\left[0,T\right]\times \mathbb{R}^N; \mathbb{R}^N\right)$, then  the mapping $r\mapsto P_{s,r}A\left(r\right)\phi\left(x\right)$ is continuous in $\left[s,T\right]$ for every $\phi\in C^2_b\left(\mathbb{R}^N\right)$;
			\item \label{poi2} Let $\alpha\in\left(0,1\right)$ and $0\le t \le T$. If $B_0\in C^{0,3}_b\left(\left[0,T\right]\times \mathbb{R}^N; \mathbb{R}^N\right)$, then for every $r\in\left[0,t\right]$ and $\phi\in C_b^3\left(\mathbb{R}^N\right)$ the mapping $x\mapsto A\left(r\right)P_{r,t}\phi\left(x\right)$ belongs to $C^1\left(\mathbb{R}^N\right)$. Moreover, $\sup_{ r\in\left[0,t\right]}\norm{1_B\nabla^\top A\left(r\right)P_{r,t}\phi}_\infty<\infty$ for every bounded set $B\subset \mathbb{R}^N$.
		\end{enumerate}
	\end{lemma}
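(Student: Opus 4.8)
The strategy is to treat the drift part and the nonlocal integral part of $A(r)$ separately, relying on the persistence-of-regularity estimates already available. For part \ref{poi1}, write $P_{s,r}A(r)\phi(x) = P_{s,r}\big(\langle A\cdot + B_0(r,\cdot),\nabla^\top\phi\rangle\big)(x) + P_{s,r}\big(\mathcal L\phi\big)(x)$, where $\mathcal L$ denotes the $r$-independent jump operator $\psi\mapsto\int_{\mathbb R^N}[\psi(\cdot+\sqrt Q z)-\psi(\cdot)-1_D(z)\nabla\psi(\cdot)\sqrt Q z]\,\nu(dz)$. Since $\phi\in C^2_b$, both $\mathcal L\phi$ and $\langle A\cdot+B_0(r,\cdot),\nabla^\top\phi\rangle$ are bounded continuous functions of $x$ (for the jump term one splits the integral over $D$ and $D^c$, using a second-order Taylor bound $|\psi(x+\sqrt Qz)-\psi(x)-\nabla\psi(x)\sqrt Qz|\le C\|\partial^2\psi\|_\infty|z|^2$ near the origin — integrable against $|z|^{-(N+2\alpha)}$ precisely because $\alpha<1$ — and boundedness of $\psi$ away from it), and moreover the map $r\mapsto\langle Ax+B_0(r,x),\nabla^\top\phi(x)\rangle$ is continuous and uniformly bounded by the continuity of $B_0$ in time and its boundedness. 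So I would fix a sequence $r_k\to r$ in $[s,T]$, decompose
\[
P_{s,r_k}A(r_k)\phi(x) - P_{s,r}A(r)\phi(x) = \big(P_{s,r_k}-P_{s,r}\big)A(r)\phi(x) + P_{s,r_k}\big(A(r_k)-A(r)\big)\phi(x);
\]
the second term vanishes because $\|(A(r_k)-A(r))\phi\|_\infty = \sup_x|\langle B_0(r_k,x)-B_0(r,x),\nabla^\top\phi(x)\rangle|\to 0$ and $P_{s,r_k}$ is a contraction on $\mathcal B_b$, while the first vanishes by stochastic continuity of $s\mapsto X^{s,x}_{r}$ in the time variable (more precisely $t\mapsto X^{s,x}_t$ is càdlàg at $t=r$, and $r_k\to r$) combined with dominated convergence applied to $\mathbb E[A(r)\phi(X^{s,x}_{r_k})]$, using that $A(r)\phi$ is bounded. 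One slight subtlety: for $r_k\downarrow r$ versus $r_k\uparrow r$ the càdlàg property only gives right-continuity directly, so continuity in $r$ needs the fact that $W_L$ has no fixed discontinuities, hence $X^{s,x}$ is a.s. continuous at each fixed time, which upgrades càdlàg to continuous at that point.

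For part \ref{poi2}, the key input is Corollary \ref{higher}: since $B_0\in C^{0,3}_b$ and $\phi\in C^3_b$, we have $P_{r,t}\phi\in C^3_b(\mathbb R^N)$ with $\sup_{r\le t}\|\partial^i P_{r,t}\phi\|_\infty<\infty$ for $i=1,2,3$. Thus the function $\psi_r := P_{r,t}\phi$ lies in $C^3_b$ uniformly in $r$. Now $A(r)\psi_r(x) = \langle Ax+B_0(r,x),\nabla^\top\psi_r(x)\rangle + \mathcal L\psi_r(x)$. Differentiating in $x$: the drift term is a product/composition of $C^1$ functions (here I use that $x\mapsto Ax+B_0(r,x)$ is $C^1$ with bounded derivative, $\psi_r\in C^2$, and $\nabla^\top\psi_r\in C^1$), giving a $C^1$ function whose gradient is controlled by $\|A\|+\|\partial B_0\|_\infty$, $\|\partial^1\psi_r\|_\infty$, $\|\partial^2\psi_r\|_\infty$, and $|x|$ — hence bounded on bounded sets, uniformly in $r$. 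For the nonlocal term one differentiates under the integral sign: $\nabla\mathcal L\psi_r(x) = \int_{\mathbb R^N}[\nabla\psi_r(x+\sqrt Qz)-\nabla\psi_r(x)-1_D(z)\nabla^2\psi_r(x)\sqrt Qz]\,\nu(dz)$, which is justified because the integrand is dominated, uniformly for $x$ in a bounded set, by $C\min(\|\partial^3\psi_r\|_\infty|z|^2, \|\partial^1\psi_r\|_\infty)$ near $0$ and by $C\|\partial^1\psi_r\|_\infty$ (plus the linearly-growing subtracted term, but that only appears on $D$) — integrable against $\nu$ since $\alpha<1$ — with all the $\psi_r$-seminorms bounded uniformly in $r\in[0,t]$ by Corollary \ref{higher}. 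This simultaneously gives continuity of $x\mapsto\nabla\mathcal L\psi_r(x)$ (dominated convergence again) and the uniform bound $\sup_{r\in[0,t]}\|1_B\nabla^\top A(r)P_{r,t}\phi\|_\infty<\infty$.

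The main obstacle I anticipate is the careful bookkeeping in part \ref{poi1}: one must be genuinely careful that the operator $A(r)$ is being applied to a fixed smooth $\phi$ (not to $P_{s,r}\phi$), so no regularization of $R$ or $P$ is needed there — the only analytic facts in play are the $C^2_b$-boundedness of $A(r)\phi$ (the integrability of the Lévy measure against $|z|^2$ near the origin, where $\alpha<1$ is essential) and the weak continuity of $P_{s,r}$ in $r$ coming from path-regularity of the flow. In part \ref{poi2}, by contrast, the whole point is that $A(r)$ is applied to $P_{r,t}\phi$, which is only as smooth as Corollary \ref{higher} allows; the delicate point there is extracting a bound that is uniform in $r\in[0,t]$ and interchanging $\nabla$ with $\int_{\mathbb R^N}(\cdots)\nu(dz)$, both of which reduce to the uniform $C^3_b$-control furnished by Corollary \ref{higher} together with $\alpha<1$. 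Neither step should require $\alpha>\tfrac12$; that restriction enters only later, through the gradient estimate \eqref{est12}.
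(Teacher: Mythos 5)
Your part \emph{(ii)} is essentially the paper's argument: uniform $C^3_b$-control of $P_{r,t}\phi$ from Corollary \ref{higher}, differentiation under the integral sign justified by the third-order Taylor bound on $D$, and the observation that the only unbounded contribution to $\nabla A(r)P_{r,t}\phi$ is the linearly growing drift factor, hence the bound is uniform on bounded sets. That part is fine.

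Part \emph{(i)}, however, contains a genuine error at its central step. You claim the term $(P_{s,r_k}-P_{s,r})A(r)\phi(x)$ vanishes ``by dominated convergence \dots using that $A(r)\phi$ is bounded.'' But $A(r)\phi$ is \emph{not} bounded: the drift contribution $\left\langle Ax+B_0(r,x),\nabla^\top\phi(x)\right\rangle$ grows linearly in $x$ (cf.\ \eqref{salvare}). A constant dominating function is therefore unavailable, and one must instead establish uniform integrability of $A(r)\phi(X^{s,x}_{r_k})$ and invoke Vitali's convergence theorem; the paper does this via the moment bound $\mathbb{E}\big[\sup_{t\in[s,T]}|X^{s,x}_t|^p\big]<\infty$ for some $p\in(1,2\alpha)$, obtained from Gronwall and a maximal inequality. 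This is precisely where the hypothesis $\alpha>\frac{1}{2}$ enters (a $2\alpha$-stable process has finite $p$-th moments only for $p<2\alpha$, and one needs $p>1$), so your closing remark that neither step requires $\alpha>\frac{1}{2}$ is wrong for part \emph{(i)}: the restriction is in the statement exactly for this reason, not only for the later gradient estimate.

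A second, more minor issue: you assert $\norm{(A(r_k)-A(r))\phi}_\infty=\sup_x\left|\left\langle B_0(r_k,x)-B_0(r,x),\nabla^\top\phi(x)\right\rangle\right|\to0$. This requires continuity of $B_0$ in time \emph{uniformly in $x$}, which the hypothesis $B_0\in C^{0,1}_b$ (joint continuity plus boundedness of $B_0$ and $DB_0$) does not provide on the noncompact domain $\mathbb{R}^N$. The paper avoids this by estimating inside the expectation: it bounds $\left|B_0(r_k,X^{s,x}_{r_k})-B_0(r,X^{s,x}_{r_k})\right|$ by $2\norm{DB_0}_{T,\infty}\left|X^{s,x}_{r_k}-X^{s,x}_r\right|+\left|B_0(r_k,X^{s,x}_r)-B_0(r,X^{s,x}_r)\right|$ and then uses stochastic continuity of the flow together with pointwise (in $\omega$) dominated convergence for the second summand. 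Your decomposition is the right one, but both of its two halves need the probabilistic inputs above rather than the purely deterministic bounds you invoke.
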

	\begin{proof}	
		We start off by proving Point \emph{\ref{poi1}}. Fix $0\le s\le T$ and $x\in\mathbb{R}^N$; from \eqref{st_semi}, Gronwall's lemma, \cite[Theorem $3.2$]{MR} and the continuity in probability of the Lévy process $W_L$ we deduce that $\mathbb{E}\left[\sup_{ t\in\left[s,T\right]}\left|X^{s,x}_t\right|^p\right]<\infty$ for every $p\in\left(1,2\alpha\right)$, and that the process $X_{\cdot}^{s,x}$ is stochastically continuous in $\left[s,T\right]$, as well. Consider $r\in\left[s,T\right]$ and a sequence $\left(r_n\right)_n\subset \left[s,T\right]$ such that $r_n\to r$ as $n\to \infty$. Given $\phi\in C^2_b\left(\mathbb{R}^N\right)$,
		\begin{multline*}
			P_{s,r_n}A\left(r_n\right)\phi\left(x\right)-P_{s,r}A\left(r\right)\phi\left(x\right)=
			P_{s,r_n}\left(A\left(r_n\right)\phi-A\left(r\right)\phi\right)\left(x\right)+
			\left(P_{s,r_n}A\left(r\right)\phi\left(x\right)-P_{s,r}A\left(r\right)\phi\left(x\right)\right)\\
			\eqqcolon\mathbf{\upperRomannumeral{1}}_n+\mathbf{\upperRomannumeral{2}}_n.
		\end{multline*}
		Since \eqref{operator} entails $\left(A\left(r_n\right)\phi-A\left(r\right)\phi\right)\left(\cdot\right)=\left\langle B_0\left(r_n,\cdot\right)-B_0\left(r,\cdot\right),\nabla^\top\phi\left(\cdot\right)\right\rangle$ we have, by Vitali's and dominated convergence theorems,
		\[
		\left|\mathbf{\upperRomannumeral{1}}_n\right|\le 
		\norm{\nabla^\top\phi}_\infty\left(2\norm{DB_0}_{T,\infty}\mathbb{E}\left[\left|X^{s,x}_{r_{n}}-X^{s,x}_{r}\right|\right]+\mathbb{E}\left[\left|B_0\left(r_n,X^{s,x}_r\right)-B_0\left(r,X^{s,x}_r\right)\right|\right]\right)\underset{n\to \infty}{\longrightarrow} 0,
		\]
	where we denote by $\norm{DB_{0}}_{T,\infty}=\sup_{ 0\le t \le T}\norm{DB_0\left(t,\cdot\right)}_\infty$. 
		As for $\mathbf{\upperRomannumeral{2}}_n$, note that $A\left(r\right)\phi$ is continuous in $\mathbb{R}^N$, and that for every $y\in\mathbb{R}^N$ (see \eqref{operator}),
		\begin{multline}\label{salvare}
			\left|A\left(r\right)\phi\left(y\right)\right|
			\le
			\norm{\nabla^\top\phi}_\infty\left(\left|A\right|\left|y\right|+\norm{B_0}_{0,T}\right)\\
			+\frac{1}{2}\norm{D^2\phi}_{\infty}\int_{\mathbb{R}^N}1_D\left(z\right)\left|\sqrt{Q}z\right|^2\nu\left(dz\right)+2\norm{\phi}_\infty\int_{\mathbb{R}^N}1_{D^c}\left(z\right)\nu\left(dz\right).
		\end{multline}
		Therefore by the continuous mapping and Vitali's convergence theorem we obtain $\mathbf{\upperRomannumeral{2}}_n\to0$ as $n\to\infty$, proving Point \emph{\ref{poi1}}. 
		
		We now move on to Point \emph{\ref{poi2}}, where it is sufficient to require $\alpha\in\left(0,1\right)$. Fix $0\le r \le t \le T$; observe that for every $\psi\in C_b^3\left(\mathbb{R}^N\right)$ one has $A\left(r\right)\psi\in C^1\left(\mathbb{R}^N\right)$, with 
		\begin{multline*}
			\nabla A\left(r\right)\psi\left(x\right)=
			\nabla\psi\left(x\right)
			\left(A+DB_0\left(r,x\right)\right)+\left(Ax+B_0\left(r,x\right)\right)^\top D^2\psi\left(x\right)\\
			+
			\int_{\mathbb{R}^N}\left[\nabla\psi\left(x+\sqrt{Q}z\right)-\nabla\psi\left(x\right)-1_D\left(z\right)\left(\sqrt{Q}z\right)^\top D^2\psi\left(x\right)\right]\nu\left(dz\right),\quad x\in\mathbb{R}^N.
		\end{multline*}
		More specifically, in the previous computation we are allowed to differentiate under the integral sign because
		\[
		\left|\nabla^\top \psi\left(x+\sqrt{Q}z\right)-\nabla^\top \psi\left(x\right)-D^2\psi\left(x\right)\sqrt{Q}z \right|\le \frac{1}{2}N^{\frac32}\norm{\partial^3\psi}_\infty\left|\sqrt{Q}z\right|^2,\quad x\in\mathbb{R}^N,\,z\in D.
		\]
		The hypotheses prescribe $B_0\in C^{0,3}_b\left(\left[0,T\right]\times \mathbb{R}^N; \mathbb{R}^N\right)$ and $\phi\in C^3_b\left(\mathbb{R}^N\right)$,  hence it is sufficient to invoke  Corollary \ref{higher} to complete proof.
	\end{proof}
	We are now in position to prove the following, crucial result concerning Kolmogorov  equations (cfr. \cite[Theorem~$4.5.1$]{Ku19} for an analogous claim in a different setting).
	\begin{theorem}\label{kfb}
		\begin{enumerate}[label=$\left(\emph{\roman*}\right)$]
			Take $\alpha\in\left(\frac{1}{2},1\right)$.
			\item\label{for_k} Let $0\le s \le T$ and $x\in\mathbb{R}^N$. If $B_0\in C^{0,1}_b\left(\left[0,T\right]\times \mathbb{R}^N; \mathbb{R}^N\right)$ and $\phi\in C_b^2\left(\mathbb{R}^N\right)$, then the function $t\mapsto P_{s,t}\phi\left(x\right)$ is continuously differentiable in $\left[s,T\right]$ and satisfies the Kolmogorov forward equation
			\begin{equation}\label{kol_for}
				\partial_t P_{s,t}\phi\left(x\right)=P_{s,t}A\left(t\right)\phi\left(x\right);
			\end{equation}
			\item \label{back_k}Let $0\le t \le T$ and  $x\in\mathbb{R}^N$. If $B_0\in C^{0,3}_b\left(\left[0,T\right]\times \mathbb{R}^N; \mathbb{R}^N\right)$ and $\phi\in C_b^3\left(\mathbb{R}^N\right)$, then the function $s\mapsto P_{s,t}\phi\left(x\right)$ is continuously differentiable in $\left[0,t\right]$ and satisfies the Kolmogorov backward equation
			\begin{equation}\label{kol_back}
				\partial_s P_{s,t}\phi\left(x\right)=-A\left(s\right)P_{s,t}\phi\left(x\right).
			\end{equation}
		\end{enumerate}
	\end{theorem}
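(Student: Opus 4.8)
The plan is to derive \eqref{kol_for} from It\^o's formula and then bootstrap \eqref{kol_back} from it via the flow property of $P$, in the spirit of \cite[Theorem~$4.5.1$]{Ku19}, the technical load coming from the low integrability of the $2\alpha$--stable noise and from the data being only finitely differentiable. Fix $0<t\le T$, $x\in\mathbb{R}^N$ and recall from \cite[Theorem $30.1$]{Sato} that $W_L$ is a symmetric pure--jump Lévy process whose measure $\nu(dz)\asymp|z|^{-(N+2\alpha)}dz$ satisfies $\int_{\{|z|\le1\}}|z|^2\,\nu(dz)<\infty$ (since $2\alpha<2$) and $\nu(\{|z|>1\})<\infty$. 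For Part \ref{for_k} I would first recall, as at the beginning of the proof of Lemma \ref{fond} (via \cite[Theorem $3.2$]{MR}, Gronwall's lemma and stochastic continuity of $W_L$), that $\mathbb{E}\big[\sup_{r\in[s,T]}|X^{s,x}_r|^p\big]<\infty$ for $p\in(1,2\alpha)$ — this is where $\alpha>1/2$ enters — so the drift of \eqref{st_semi} is integrable on $\Omega\times[s,T]$. Applying It\^o's formula for semimartingales with jumps to $\phi\in C^2_b(\mathbb{R}^N)$ along $X^{s,x}$ and using the Lévy--It\^o decomposition of $W_L$, the first--order term together with the compensator of the jumps reassembles exactly the operator $A(\cdot)$ of \eqref{operator}, leaving
\[
\phi(X^{s,x}_r)=\phi(x)+\int_s^r A(\rho)\phi(X^{s,x}_\rho)\,d\rho+M_r,\qquad M_r=\int_s^r\!\!\int_{\mathbb{R}^N}\!\big[\phi(X^{s,x}_{\rho-}+\sqrt{Q}z)-\phi(X^{s,x}_{\rho-})\big]\,\widetilde N(d\rho,dz),
\]
with $\widetilde N$ the compensated jump measure of $W_L$. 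Here $M$ is a square--integrable martingale, since $\mathbb{E}\int_s^T\!\int_{\mathbb{R}^N}\big|\phi(X^{s,x}_{\rho-}+\sqrt{Q}z)-\phi(X^{s,x}_{\rho-})\big|^2\nu(dz)\,d\rho<\infty$ by the bounds $|\phi(y+\sqrt{Q}z)-\phi(y)|\le\|\partial^1\phi\|_\infty|\sqrt{Q}z|$ on $\{|z|\le1\}$ and $\le2\|\phi\|_\infty$ on $\{|z|>1\}$ together with the two integrability properties of $\nu$. Taking expectations — legitimate by Fubini's theorem because $|A(\rho)\phi(y)|\le C(1+|y|)$ as in \eqref{salvare} and $\sup_\rho\mathbb{E}|X^{s,x}_\rho|<\infty$ — gives $P_{s,r}\phi(x)=\phi(x)+\int_s^r P_{s,\rho}A(\rho)\phi(x)\,d\rho$ for $r\in[s,T]$; since $\rho\mapsto P_{s,\rho}A(\rho)\phi(x)$ is continuous on $[s,T]$ by Lemma \ref{fond}\,\ref{poi1}, the fundamental theorem of calculus yields the $C^1$ regularity in $r$ and \eqref{kol_for}.

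For Part \ref{back_k} set $v(\cdot)=P_{\cdot,t}\phi(x)$. By Corollary \ref{higher} (with $n=3$, using $B_0\in C^{0,3}_b$ and $\phi\in C^3_b$) each $P_{s',t'}\phi$ lies in $C^3_b(\mathbb{R}^N)$ with $C^3$--norm bounded uniformly in $(s',t')$; in particular $P_{r,t}\phi\in C^2_b$, so $A(r)P_{r,t}\phi$ is well defined and, by Lemma \ref{fond}\,\ref{poi2}, of class $C^1$ in $x$ with gradient bounded on bounded sets uniformly in $r\in[0,t]$. A path--by--path argument based on the flow properties from \cite{priola} and on Lemma \ref{hoc} — writing $\nabla P_{r,t}\phi(x)=\mathbb{E}\big[\nabla\phi(X^{r,x}_t)\,DX^{r,x}_t\big]$, and analogously for $D^2P_{r,t}\phi(x)$, with the flow derivatives bounded through \eqref{boundala} — together with the absence of fixed times of discontinuity of $W_L$, shows that $r\mapsto P_{r,t}\phi(x)$ and the maps $r\mapsto\nabla P_{r,t}\phi(x)$, $r\mapsto D^2P_{r,t}\phi(x)$ are continuous on $[0,t]$, and that $P_{r',t}\phi\to P_{r,t}\phi$ pointwise on $\mathbb{R}^N$ as $r'\to r$. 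Then, for $s\in[0,t)$ and $h\in(0,t-s]$, the flow identity $P_{s,t}=P_{s,s+h}P_{s+h,t}$ and Part \ref{for_k} applied to $\psi=P_{s+h,t}\phi\in C^3_b$ give
\[
\frac{v(s+h)-v(s)}{h}=-\frac1h\int_s^{s+h}P_{s,\rho}\big(A(\rho)P_{s+h,t}\phi\big)(x)\,d\rho,
\]
with the mirror identity $\frac{v(s)-v(s-h)}{h}=-\frac1h\int_{s-h}^{s}P_{s-h,\rho}\big(A(\rho)P_{s,t}\phi\big)(x)\,d\rho$ for the left quotient when $s\in(0,t]$, $h\in(0,s]$.

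The main obstacle is to pass to the limit $h\to0^+$ in these averages and identify the limit as $-A(s)P_{s,t}\phi(x)$. I would prove that the integrands converge, uniformly in $\rho$ over the shrinking interval, to $A(s)P_{s,t}\phi(x)$, by decomposing (for the first quotient) $P_{s,\rho}(A(\rho)P_{s+h,t}\phi)(x)-A(s)P_{s,t}\phi(x)$ as $\big[P_{s,\rho}g_{h,\rho}(x)-g_{h,\rho}(x)\big]+\big[g_{h,\rho}(x)-A(s)P_{s,t}\phi(x)\big]$, with $g_{h,\rho}=A(\rho)P_{s+h,t}\phi$. The first summand is controlled using that both $g_{h,\rho}$ and $\nabla g_{h,\rho}$ grow at most linearly, with constants independent of $h$ and $\rho$ (from the uniform $C^3$--bound together with \eqref{salvare} and the formula for $\nabla A(\rho)\psi$ in the proof of Lemma \ref{fond}\,\ref{poi2}), plus $X^{s,x}_\rho\to x$ in $L^p$ for some $p>1$ as $\rho\to s^+$ and uniform integrability of $1+\sup_{u\in[s,T]}|X^{s,x}_u|$, through a truncation argument. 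The second summand is bounded, via \eqref{operator}, by $|B_0(\rho,x)-B_0(s,x)|$, by $|\nabla(P_{s+h,t}\phi-P_{s,t}\phi)(x)|$, and by the $\nu$--integral of $P_{s+h,t}\phi-P_{s,t}\phi$; these tend to $0$ as $h\to0^+$ (uniformly in $\rho$ for the first) by continuity of $B_0(\cdot,x)$ and of $\nabla P_{\cdot,t}\phi(x)$, and by dominated convergence with dominating function $C(1\wedge|z|^2)\in L^1(\nu)$ — coming from the uniform $C^0$-- and $C^2$--bounds and the pointwise convergence $P_{s+h,t}\phi\to P_{s,t}\phi$. The left quotient is treated identically, now using $X^{s-h,x}_\rho\to x$ in probability as $h\to0^+$ ($\rho\in[s-h,s]$) and moment bounds uniform for $h$ small. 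This establishes $v'(s)=-A(s)P_{s,t}\phi(x)$ on $[0,t]$; finally, continuity of $s\mapsto A(s)P_{s,t}\phi(x)$ — its drift part from continuity of $B_0(\cdot,x)$ and $\nabla P_{\cdot,t}\phi(x)$, its $\nu$--integral part from dominated convergence as above — upgrades $v$ to $C^1([0,t])$ and gives \eqref{kol_back}.
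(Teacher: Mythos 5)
Your Part \emph{\ref{for_k}} is essentially the paper's argument: Lévy--Itô decomposition, Itô's formula for $\phi\in C^2_b$, expectations plus Fubini to get $P_{s,r}\phi(x)=\phi(x)+\int_s^r P_{s,\rho}A(\rho)\phi(x)\,d\rho$, and Lemma \ref{fond}\,\emph{\ref{poi1}} to differentiate; your packaging of all jumps into a single compensated martingale is only a cosmetic rearrangement of the paper's splitting between $\widetilde N$ and $N$. Part \emph{\ref{back_k}} is where you genuinely diverge. The paper does \emph{not} go through difference quotients of the semigroup: it invokes the backward dynamics of the flow $X^{\cdot,x}_t$ in the initial time (Kunita) and the backward Itô formula to obtain directly the integral identity \eqref{ste1}, $P_{s,t}\phi(x)=\phi(x)+\int_s^t A(r)P_{r,t}\phi(x)\,dr$, from which continuity of $r\mapsto A(r)P_{r,t}\phi(x)$ (via Lemma \ref{fond}\,\emph{\ref{poi2}} and Corollary \ref{higher}) yields \eqref{kol_back}. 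You instead derive the backward equation from the forward one through the Chapman--Kolmogorov identity $P_{s,t}=P_{s,s+h}P_{s+h,t}$ applied to $\psi=P_{s+h,t}\phi\in C^3_b$, and then pass to the limit in the averaged integrals. This is a legitimate classical route; it spares you the backward flow representation and the backward Itô formula, at the price of having to establish independently the continuity in $r$ of $P_{r,t}\phi(y)$ and of $\nabla P_{r,t}\phi(x)$ (which the paper gets for free from \eqref{ste1}). Those continuities are indeed provable with the paper's toolkit, but more cleanly via Chapman--Kolmogorov again (e.g.\ $|P_{s,t}\phi(y)-P_{s+h,t}\phi(y)|\le\sup_h\norm{\nabla P_{s+h,t}\phi}_\infty\,\mathbb{E}\left|X^{s,y}_{s+h}-y\right|$, using Corollary \ref{higher} and forward stochastic continuity) than via the rather vague ``path--by--path, no fixed discontinuities'' argument you sketch, since the flow is only càdlàg in the initial time.

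One step would fail as literally written: your control of $P_{s,\rho}g_{h,\rho}(x)-g_{h,\rho}(x)$ through ``linear growth of $g_{h,\rho}$ and $\nabla g_{h,\rho}$'' plus ``uniform integrability of $1+\sup_u|X^{s,x}_u|$ and truncation'' implicitly requires bounding $\mathbb{E}\left[\left(1+\left|X^{s,x}_\rho\right|\right)\left|X^{s,x}_\rho-x\right|\right]$, i.e.\ essentially a second moment of $X^{s,x}_\rho$, which does not exist for the $2\alpha$--stable noise ($2\alpha<2$; the paper only has moments of order $p<2\alpha$). The repair is to observe that the only unbounded coefficient in $A(\rho)\psi$ is the linear drift, and to split
\[
\left\langle AX^{s,x}_\rho,\nabla^\top\psi\left(X^{s,x}_\rho\right)\right\rangle-\left\langle Ax,\nabla^\top\psi\left(x\right)\right\rangle=\left\langle A\left(X^{s,x}_\rho-x\right),\nabla^\top\psi\left(X^{s,x}_\rho\right)\right\rangle+\left\langle Ax,\nabla^\top\psi\left(X^{s,x}_\rho\right)-\nabla^\top\psi\left(x\right)\right\rangle,
\]
which, together with the uniform $C^2$/$C^3$ bounds of Corollary \ref{higher} for $\psi=P_{s+h,t}\phi$ and the Lipschitz bound on the nonlocal part of $\nabla A(\rho)\psi$, gives $\left|g_{h,\rho}\left(X^{s,x}_\rho\right)-g_{h,\rho}\left(x\right)\right|\le C\left(1+\left|x\right|\right)\left|X^{s,x}_\rho-x\right|$ and requires only first moments. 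With this correction your limiting argument closes, and the rest of the proposal is sound.
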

	\begin{proof}
		Recall that by \cite[Theorem $14.7$ (\lowerRomannumeral{3})]{Sato} the process $W_L$ is centered in $0$ when $\alpha\in\left(\frac{1}{2},1\right)$. As a consequence, denoting by $N$ the Poisson random measure associated with its jumps and by $\widetilde{N}$ the compensated measure, $W_{L}=\int_{0}^{\cdot}\int_{\mathbb{R}^N}1_D\left(z\right)z\widetilde{N}\left(ds,dz\right)+\int_{0}^{\cdot}1_{D^c}\left(z\right)zN\left(ds,\,dz\right)$ up to indistinguishability by \cite[Theorem $2.34$, Chapter~ \upperRomannumeral{2}]{js}. 
		
		As for  Point \emph{\ref{for_k}}, take $0\le s \le T,\,x\in\mathbb{R}^N$ and $\phi\in C^2_b\left(\mathbb{R}^N\right)$; by \eqref{st_semi} an application of It\^o formula 
		ensures that
		\begin{multline*}
			\phi\left(X^{s,x}_t\right)=
			\phi\left(x\right)+\int_{s}^{t}\left\langle A X^{s,x}_r+B_0\left(r,X^{s,x}_r\right),\nabla^\top\phi\left(X_r^{s,x}\right)\right\rangle dr+\int_{s}^{t}\int_{\mathbb{R}^N}1_D\left(z\right)\nabla\phi\left(X^{s,x}_{r-}\right)\sqrt{Q}z\,\widetilde{N}\left(dr,dz\right)\\
			+
			\int_{s}^{t}\int_{\mathbb{R}^N}\left(\phi\left(X_{r-}^{s,x}+\sqrt{Q}z\right)-\phi\left(X_{r-}^{s,x}\right)-1_D\left(z\right)\nabla\phi\left(X^{s,x}_{r-}\right)\sqrt{Q}z\right){N}\left(dr,dz\right),
		\end{multline*}
		which holds true $\mathbb{P}-\text{a.s.}$ for every $t\in\left[s,T\right]$.	Taking expectations in the previous equation and using Fubini's theorem we obtain
		\[
		P_{s,t}\phi\left(x\right)=\phi\left(x\right)+\int_{s}^t\mathbb{E}\left[A\left(r\right)\phi\left(X^{s,x}_r\right)\right]dr=\phi\left(x\right)+\int_{s}^{t}P_{s,r}A\left(r\right)\phi\left(x\right)dr,\quad t\in\left[s,T\right],
		\]
		which in turn implies \eqref{kol_for} by Lemma \ref*{fond} \emph{\ref{poi1}}.
		
		We now focus on Point \emph{\ref{back_k}}. Take $0\le t \le T$ and $x\in\mathbb{R}^N$; arguing as in \cite[Proposition $3.8.2$]{Ku19} we see that $X^{\cdot,x}_t$ follows the backward dynamics ($\mathbb{P}-$a.s.) 
		\begin{multline*}
			X^{s,x}_t=x+\int_{s}^{t}DX^{r,x}_t\left(Ax+B_0\left(r,x\right)\right)dr+\int_{s}^{t}\int_{\mathbb{R}^N}\left[X^{r,x+\sqrt{Q}z}_t-X^{r,x}_t-1_D\left(z\right)DX^{r,x}_t\sqrt{Q}z\right]\nu\left(dz\right)dr\\
			+\int_{s}^{t}\int_{\mathbb{R}^N}\left(X^{r,x+\sqrt{Q}z}_t-X^{r,x}_t\right)\widetilde{N}\left(dr,dz\right),\quad s\in\left[0,t\right].
		\end{multline*}
		Hence invoking the backward It\^o formula (see, e.g., \cite[Theorem $2.7.1$]{Ku19}) we deduce that, for every $\phi\in C_b^2\left(\mathbb{R}^N\right)$ and $s\in\left[0,t\right]$,
		\begin{multline*}
			\!\!\!\!\phi\left(X^{s,x}_t\right)=\phi\left(x\right)+
			\int_{s}^{t}\!\int_{\mathbb{R}^N}\left(\phi\left(X_t^{r,x+\sqrt{Q}z}\right)-\phi\left(X^{r,x}_t\right)\right)\widetilde{N}\left(dr,dz\right)\!
			+\int_{s}^{t}\!\nabla \phi\left(X^{r,x}_t\right)DX^{r,x}_t\left(Ax+B_0\left(r,x\right)\right)dr
			\\+\int_{s}^{t}\int_{\mathbb{R}^N}\left[\phi\left(X_t^{r,x+\sqrt{Q}z}\right)-\phi\left(X^{r,x}_t\right)-1_D\left(z\right)\nabla \phi\left(X^{r,x}_t\right)DX^{r,x}_t\sqrt{Q}z\right]\nu\left(dz\right)dr,
		\end{multline*}
		which holds true $\mathbb{P}-$a.s.
		Taking expectations in the previous equation and using Fubini's theorem (remember Lemma \ref{hoc}) we obtain
		\begin{equation}\label{ste1}
			P_{s,t}\phi\left(x\right)=\phi\left(x\right)+\int_{s}^tA\left(r\right)P_{r,t}\phi\left(x\right)dr,\quad s\in\left[0,t\right].
		\end{equation}
		Since by hypotheses we are working with $\phi\in C_b^3\left(\mathbb{R}^N\right)$ and $B_0\in C^{0,3}_b\left(\left[0,T\right]\times \mathbb{R}^N; \mathbb{R}^N\right)$, by Lemma \ref*{fond} \emph{\ref{poi2}} we can differentiate in $x$ the expression in \eqref{ste1}, showing the continuity of the mapping $r\mapsto \nabla P_{r,t}\phi\left(x\right)$ in $\left[0,t\right]$. This, together with \eqref{operator}, the fact that \eqref{ste1} also provides the continuity of the mapping $r\mapsto P_{r,t}\phi\left(x\right)$ in $\left[0,t\right]$ and a dominated convergence argument based on Corollary \ref{higher}, ensures the continuity of the function $r\mapsto A\left(r\right)P_{r,t}\phi\left(x\right)$ in the same interval. Therefore differentiating  \eqref{ste1} with respect to $s$ we infer \eqref{kol_back}. The proof is now complete.
	\end{proof}
	
	Another  step that we need to prove Theorem \ref{connection} consists in a regularization result for the time--dependent Markov transition semigroup $P_{s,t}$ (see Lemma \ref{bou}) which --at the best of our knowledge-- is not established in literature with this type of noise. We start by recalling the \emph{Bismut--Elworthy--Li's type formula} presented in \cite[Theorem $1.1$]{Z} (see also \cite{Zm} for a related work treating multiplicative Lévy noise); such a formula is adapted to our framework, where we have to account for an initial time $s$ not necessarily equal to $0$.
	\begin{theorem}[\cite{Z}]\label{zhang}
		Let $\alpha\in\left(\frac{1}{2},1\right)$ and $B_0\in C_b^{0,1}\left(\left[0,T\right]\times\mathbb{R}^N ;\mathbb{R}^N\right)$. Then for every $0\le s<t\le T$ and $\phi\in C^1_b\left(\mathbb{R}^N\right)$,  the function $P_{s,t}\phi$ is differentiable at  $x$ in every direction $h\in\mathbb{R}^N$ and 
		\begin{equation}\label{bel}
			\left\langle\nabla^\top P_{s,t}\phi\left(x\right),h\right\rangle
			=
			\mathbb{E}\left[\frac{1}{L_t-L_s}\phi\left({X}_t^{s,x}\right)\int_{s}^{t}\left\langle\left(\sqrt{Q}\right)^{-1}D_h{X}^{s,x}_r, dW_{L_r}\right\rangle\right].
		\end{equation}
		
		Furthermore, there exists a constant ${C}_\alpha>0$ such that the next gradient estimate holds true:
		\begin{equation}\label{est_z}
			\norm{\nabla^\top P_{s,t}\phi}_\infty
			\le{C}_\alpha
			\norm{\phi}_{\infty}\left|\left(\sqrt{Q}\right)^{-1}\right|e^{\left(\left|A\right|+\norm{DB_0 }_{T,\infty}\right)T}\frac{1}{\left(t-s\right)^{1/\left(2\alpha\right)}}
			,\quad 0\le s <t\le T.
		\end{equation}
	\end{theorem}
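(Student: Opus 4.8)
The plan is to deduce the claim from \cite[Theorem $1.1$]{Z}, which proves the analogous Bismut--Elworthy--Li formula for equations of the type \eqref{st_semi} started at time $0$, by means of a deterministic time shift. First I would fix $0\le s<t\le T$ and set, for $u\in\left[0,T-s\right]$,
\[
\widetilde{W}_u:=W_{L_{s+u}}-W_{L_s},\qquad \widetilde{L}_u:=L_{s+u}-L_s,\qquad \widetilde{B}_0\left(u,y\right):=B_0\left(s+u,y\right),
\]
so that $\widetilde{B}_0\in C_b^{0,1}\left(\left[0,T-s\right]\times\mathbb{R}^N;\mathbb{R}^N\right)$ with $\norm{D\widetilde{B}_0}_{T-s,\infty}\le\norm{DB_0}_{T,\infty}$. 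Writing $\widetilde{W}$ as the Brownian motion $v\mapsto W_{L_s+v}-W_{L_s}$ evaluated at $\widetilde{L}_\cdot$, and using that $W$ and $L$ are independent, $W$ has independent increments and $L$ is an $\alpha-$stable subordinator, one checks that $\widetilde{W}$ is again an isotropic $2\alpha-$stable, subordinated Brownian motion with the same law as $W_L$ --its subordinator $\widetilde{L}$ being distributed as $L$-- and that $\big(\widetilde{W},\widetilde{L}\big)$ is independent of $\mathcal{F}_s$. By the flow property and pathwise uniqueness for \eqref{st_semi}, the process $Y_u:=X^{s,x}_{s+u},\,u\in\left[0,T-s\right],$ is, up to indistinguishability, the unique solution of
\[
dY_u=\big(AY_u+\widetilde{B}_0\left(u,Y_u\right)\big)du+\sqrt{Q}\,d\widetilde{W}_u,\qquad Y_0=x,
\]
an equation of the type \eqref{st_semi} with initial time $0$, drift $\widetilde{B}_0$ and driving noise $\widetilde{W}$; in particular $\mathbb{E}\left[\phi\left(Y_{t-s}\right)\right]=P_{s,t}\phi\left(x\right)$ for every $\phi\in\mathcal{B}_b\left(\mathbb{R}^N\right)$.

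Next I would apply \cite[Theorem $1.1$]{Z} to this SDE on the interval $\left[0,t-s\right]$: this gives that $x\mapsto P_{s,t}\phi\left(x\right)=\mathbb{E}\left[\phi\left(Y_{t-s}\right)\right]$ is differentiable at each $x$ in every direction $h\in\mathbb{R}^N$, and yields the Bismut--Elworthy--Li representation of $\left\langle\nabla^\top P_{s,t}\phi\left(x\right),h\right\rangle$ in terms of $Y$, $\widetilde{W}$ and $\widetilde{L}$. Performing the substitution $\rho=s+r$ in the resulting time integral, and noting that $\widetilde{L}_{t-s}=L_t-L_s$, $Y_{t-s}=X^{s,x}_t$, $D_hY_r=D_hX^{s,x}_{s+r}$, and $\int_0^{t-s}\left\langle\big(\sqrt{Q}\big)^{-1}D_hY_r,d\widetilde{W}_r\right\rangle=\int_s^t\left\langle\big(\sqrt{Q}\big)^{-1}D_hX^{s,x}_\rho,dW_{L_\rho}\right\rangle$ after the corresponding change of variables in the stochastic integral, one recovers \eqref{bel}.

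To obtain \eqref{est_z} I would take absolute values in \eqref{bel} and bound $\left|\phi\right|$ by $\norm{\phi}_\infty$. Since $\left(D_hX^{s,x}_r\right)_{r\in\left[s,t\right]}$ solves, pathwise, the linear equation $\frac{d}{dr}D_hX^{s,x}_r=\big(A+DB_0\left(r,X^{s,x}_r\right)\big)D_hX^{s,x}_r$ with $D_hX^{s,x}_s=h$, Gronwall's lemma yields $\sup_{r\in\left[s,t\right]}\left|D_hX^{s,x}_r\right|\le e^{\left(\left|A\right|+\norm{DB_0}_{T,\infty}\right)T}\left|h\right|$; hence the integrand in \eqref{bel} is dominated by $\left|\big(\sqrt{Q}\big)^{-1}\right|e^{\left(\left|A\right|+\norm{DB_0}_{T,\infty}\right)T}\left|h\right|$. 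It then remains to control $\mathbb{E}\big[\left(L_t-L_s\right)^{-1}\big|\int_s^t\langle\cdots\rangle\big|\big]$, which is handled by conditioning on the subordinator (turning $W_L$ into a time--changed Brownian motion), Burkholder--Davis--Gundy's inequality, and the self--similarity of $L$ --through $\mathbb{E}\big[\left(L_t-L_s\right)^{-1/2}\big]\asymp\left(t-s\right)^{-1/\left(2\alpha\right)}$--, exactly as in \cite[Theorem $1.1$]{Z}. Using $T-s\le T$ and the monotonicity of the exponential then yields \eqref{est_z}.

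The delicate point is the reduction carried out in the first step: one must verify carefully that $\widetilde{W}$ has the same law as $W_L$ and is independent of $\mathcal{F}_s$ --a manifestation of the Lévy (strong Markov) property of the subordinated Brownian motion-- and that the augmented natural filtration generated by $\widetilde{W}$ fulfils the hypotheses under which \cite[Theorem $1.1$]{Z} is stated, so that all of its conclusions transfer to $Y$ verbatim. Once this is settled, the remaining steps reduce to the bookkeeping of an affine change of variables.
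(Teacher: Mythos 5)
Your proposal is correct and follows exactly the route the paper intends: Theorem \ref{zhang} is stated as a cited result, with the text before it noting only that the formula of \cite[Theorem $1.1$]{Z} ``is adapted to our framework, where we have to account for an initial time $s$ not necessarily equal to $0$,'' and your time--shift reduction (using the stationary independent increments of $W_L$, the flow property, and the change of variables in the stochastic integral) is precisely that adaptation, with the gradient bound obtained as in \cite{Z} via conditioning on the subordinator, Burkholder--Davis--Gundy and the scaling $\mathbb{E}\big[(L_t-L_s)^{-1/2}\big]\asymp(t-s)^{-1/(2\alpha)}$. The paper supplies no further proof, so there is nothing to compare beyond this; your write--up simply makes explicit the verification the paper leaves to the reader.
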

	We are able to extend  the previous claim to functions $\phi\in C_b\left(\mathbb{R}^N\right)$ with an approximation procedure, effectively making Theorem \ref{zhang} a regularization--by--noise result. We need the next estimate, which derives from \cite[Eq. $(14)$]{2}:
	\begin{equation}\label{sub}
		\mathbb{E}\left[\frac{1}{L_t^{p}}\right]^{\frac{1}{p}}\le c\,t^{-\frac{1}{\alpha}},\quad t>0, \text{ for some $c=c\left({\alpha,p}\right)>0$, for every $p>0$.}
	\end{equation}
	\begin{corollary}\label{cor}
		Let $\alpha\in\left(\frac{1}{2},1\right)$ and $B_0\in C_b^{0,1}\left(\left[0,T\right]\times\mathbb{R}^N ;\mathbb{R}^N\right)$. Then, for every $\phi\in C_b\left(\mathbb{R}^N\right)$ and $0\le s <t\le T$, the function $P_{s,t}\phi$ is differentiable at $x\in\mathbb{R}^N$ in every direction $h\in\mathbb{R}^N$, and the expression in \eqref{bel} holds true. 
	\end{corollary}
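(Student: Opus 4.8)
The strategy is to obtain \eqref{bel} for $\phi\in C_b(\mathbb R^N)$ as a limit of the same identity for smooth approximations of $\phi$, using the gradient bound \eqref{est_z} and the negative--moment estimate \eqref{sub} to justify every passage to the limit. Fix $\phi\in C_b(\mathbb R^N)$ and $0\le s<t\le T$, and set $\phi_k=\phi*\rho_{1/k}$ for a standard mollifier, so that $\phi_k\in C_b^\infty(\mathbb R^N)\subset C_b^1(\mathbb R^N)$, $\norm{\phi_k}_\infty\le\norm{\phi}_\infty$ for every $k$, and $\phi_k\to\phi$ pointwise (in fact locally uniformly). Writing $M_h^{s,y}=\int_s^t\bigl\langle(\sqrt{Q})^{-1}D_hX_r^{s,y},dW_{L_r}\bigr\rangle$ for $y,h\in\mathbb R^N$, Theorem~\ref{zhang} applied to $\phi_k$ gives $\langle\nabla^\top P_{s,t}\phi_k(y),h\rangle=\mathbb E[(L_t-L_s)^{-1}\phi_k(X_t^{s,y})M_h^{s,y}]$; integrating this identity along the segment $[x,x+h]$ yields, for all $x,h\in\mathbb R^N$,
\[
P_{s,t}\phi_k(x+h)-P_{s,t}\phi_k(x)=\int_0^1\bigl\langle\nabla^\top P_{s,t}\phi_k(x+rh),h\bigr\rangle\,dr.
\]

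I would then let $k\to\infty$ in this relation. The left--hand side converges to $P_{s,t}\phi(x+h)-P_{s,t}\phi(x)$ by dominated convergence, since $P_{s,t}\psi(y)=\mathbb E[\psi(X_t^{s,y})]$. For the right--hand side, the crucial integrability fact is that, conditioning on $\mathcal F^L$ and using the independence of $L$ and $W$ together with the path--by--path bound $\lvert D_hX_r^{s,y}\rvert\le e^{(\lvert A\rvert+\norm{DB_0}_{T,\infty})T}\lvert h\rvert$, the conditional It\^o isometry yields $\mathbb E[(M_h^{s,y})^2\mid\mathcal F^L]=\mathbb E\bigl[\int_s^t\lvert(\sqrt{Q})^{-1}D_hX_r^{s,y}\rvert^2\,dL_r\mid\mathcal F^L\bigr]\le\lvert(\sqrt{Q})^{-1}\rvert^2 e^{2(\lvert A\rvert+\norm{DB_0}_{T,\infty})T}\lvert h\rvert^2(L_t-L_s)$; hence, by conditional Cauchy--Schwarz and \eqref{sub} with $p=1/2$ (recall $L_t-L_s\overset{d}{=}L_{t-s}$), the random variable $(L_t-L_s)^{-1}M_h^{s,y}$ lies in $L^1(\mathbb P)$. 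Since $\phi_k(X_t^{s,y})\to\phi(X_t^{s,y})$ a.s. with $\norm{\phi_k}_\infty\le\norm{\phi}_\infty$, dominated convergence gives $\langle\nabla^\top P_{s,t}\phi_k(y),h\rangle\to\ell_h(y):=\mathbb E[(L_t-L_s)^{-1}\phi(X_t^{s,y})M_h^{s,y}]$ for each fixed $y$, while \eqref{est_z} applied to $\phi_k$ bounds the integrand above, uniformly in $k$ and $r\in[0,1]$, by a constant. A further dominated convergence in $r$ then produces
\[
P_{s,t}\phi(x+h)-P_{s,t}\phi(x)=\int_0^1\ell_h(x+rh)\,dr,\qquad x,h\in\mathbb R^N.
\]

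Finally I would show that $y\mapsto\ell_h(y)$ is continuous, which upgrades the last identity to directional differentiability. Decomposing $\ell_h(y)-\ell_h(x)$ into a term involving $M_h^{s,y}-M_h^{s,x}$ and one involving $\phi(X_t^{s,y})-\phi(X_t^{s,x})$: the first is controlled via the conditional isometry together with $\sup_{r\in[s,t]}\lvert D_hX_r^{s,y}-D_hX_r^{s,x}\rvert\to0$ as $y\to x$ (continuity of the flow, of its derivative, and of $DB_0$), multiplied by the integrable factor $(L_t-L_s)^{-1/2}$; the second vanishes by the a.s.\ continuity $X_t^{s,y}\to X_t^{s,x}$, the boundedness of $\phi$, and the $L^1$--integrability of $(L_t-L_s)^{-1}M_h^{s,x}$. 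Granted this, setting $F(\tau)=P_{s,t}\phi(x+\tau h)$, the previous display and the homogeneity $\ell_{\lambda h}=\lambda\ell_h$ give $F(\tau_2)-F(\tau_1)=\int_{\tau_1}^{\tau_2}\ell_h(x+\tau h)\,d\tau$, so $F\in C^1$ with $F'(0)=\ell_h(x)$; that is, $P_{s,t}\phi$ is differentiable at $x$ in the direction $h$ and \eqref{bel} holds. The main technical obstacle is the bookkeeping around the subordinator: since $\mathbb E[L_{t-s}]=\infty$ for $\alpha\in(\tfrac12,1)$, no estimate can discard the compensating negative power of $L_t-L_s$, and every moment computation must be carried out after conditioning on $\mathcal F^L$, relying on the conditional It\^o isometry for stochastic integrals against the subordinated Brownian motion.
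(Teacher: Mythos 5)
Your proposal is correct, and it reaches \eqref{bel} for $\phi\in C_b\left(\mathbb{R}^N\right)$ by a genuinely different mechanism than the paper. The paper approximates $\phi$ \emph{uniformly} by smooth functions, sets $g_n\left(u\right)=P_{s,t}\phi_n\left(x+uh\right)$, and shows via the estimate \eqref{pr_i} (itself obtained from \cite[Theorem $3.2$]{Z}, \eqref{sub} and H\"older with $p\in\left(1,2\alpha\right)$) that $g_n'$ converges \emph{uniformly} on $\mathbb{R}$, with the error controlled linearly by $\norm{\phi_n-\phi}_\infty$; the classical theorem on differentiating a uniform limit then yields both differentiability and the formula in one stroke, with no need to discuss continuity of the limiting expression inside the proof. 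You instead use only bounded pointwise (locally uniform) convergence of mollifications, write the fundamental theorem of calculus for each $P_{s,t}\phi_k$ along the segment (legitimate, since directional differentiability from Theorem \ref{zhang} plus the bound \eqref{est_z} makes $u\mapsto P_{s,t}\phi_k\left(x+uh\right)$ Lipschitz), pass to the limit by dominated convergence on both sides, and then must separately prove continuity of $y\mapsto\ell_h\left(y\right)$ to upgrade the integral identity to differentiability. That last step is exactly the computation the paper carries out in the displayed estimate immediately \emph{after} Corollary \ref{cor} (needed there for Lemma \ref{bou}), so no work is wasted; your route trades the quantitative uniform estimate for a qualitative continuity argument and is slightly more robust, in that it does not rely on uniform approximability of $\phi$. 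Your derivation of the key integrability $\left(L_t-L_s\right)^{-1}M_h^{s,y}\in L^1\left(\mathbb{P}\right)$ via the conditional It\^o isometry, conditional Cauchy--Schwarz and \eqref{sub} with $p=\frac12$ is also a sound, self--contained replacement for the paper's citation of \cite[Theorem $3.2$]{Z}, and it does not even use $\alpha>\frac12$ at that point. Both arguments ultimately rest on the same two pillars: the validity of \eqref{bel}--\eqref{est_z} for smooth data and the negative moments of the subordinator.
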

	\begin{proof}
		Fix $x,h\in\mathbb{R}^N, 0\le s < t\le T$,  and  $\phi\in C_b\left(\mathbb{R}^N\right)$. Since $C_b^{\infty}\left(\mathbb{R}^N\right)$ is dense in $C_b\left(\mathbb{R}^N\right)$, we can take a sequence $\left(\phi_n\right)_n\subset C_b^{\infty}\left(\mathbb{R}^N\right)$ such that $\norm{\phi_n-\phi}_\infty\to 0$ as $n\to\infty$. Denote by $g_n\left(u\right)=P_{s,t}\phi_n\left(x+uh\right),\,u\in\mathbb{R}$; by dominated convergence, for every $u\in\mathbb{R}$,
		\[
		g_n\left(u\right)\to
		P_{s,t}\phi\left(x+uh\right)
		\eqqcolon g\left(u\right),\quad \text{as } n\to\infty.
		\]
		Now we invoke \eqref{bel}  to write 
		\begin{multline*}
			g_n'\left({u}\right)=\lim_{v\to 0}\frac{\mathbb{E}\left[\phi_n\left(X_t^{s,x+uh+vh}\right)\right]-\mathbb{E}\left[\phi_n\left(X_t^{s,x+uh}\right)\right]}{v}
			=\left\langle\nabla^\top P_{s,t}\phi_n\left(x+uh\right),h\right\rangle\\
			=
			\mathbb{E}\left[\frac{1}{L_t-L_s}\phi_n\left(X_t^{s,x+uh}\right)\int_{s}^{t}\left\langle\left(\sqrt{Q}\right)^{-1}D_hX^{s,x+{u}h}_r , dW_{L_r}\right\rangle\right],\quad {u}\in\mathbb{R}.
		\end{multline*}
		Since $\alpha\in\left(\frac{1}{2},1\right),$ an application of \cite[Theorem $3.2$]{Z}, \eqref{sub}, H\"older's inequality with $p\in\left(1,2\alpha\right)$ and Lemma~\ref{hoc} (see \eqref{boundala}) let us  compute
		\begin{multline}\label{pr_i}
			\sup_{u\in\mathbb{R}}	\mathbb{E}\left[\left|\frac{1}{L_t-L_s}\left(\phi_n\left(X_t^{s,x+{u}h}\right)-\phi\left(X_t^{s,x+{u}h}\right)\right)
			\int_{s}^{t}\left\langle\left(\sqrt{Q}\right)^{-1}D_hX^{s,x+{u}h}_r , dW_{L_r}\right\rangle\right|\right]\\\le \left|\left(\sqrt{Q}\right)^{-1}\right|\left|h\right|
			\frac{c_{1}}{\left(t-s\right)^{1/\alpha}}\norm{\phi_n-\phi}_\infty\to0,\quad \text{as }n\to \infty,
		\end{multline}
		where $c_{1}=c_1\left(\alpha,p,A,B_0,T,N\right)>0$. It follows that 
		\[
		g_n'\to \mathbb{E}\left[\dfrac{1}{L_t-L_s}\phi\left(X_t^{s,x+\left(\cdot\right) h}\right)\int_{s}^{t}\left\langle\left(\sqrt{Q}\right)^{-1}D_hX^{s,x+\left(\cdot\right)h}_r, dW_{L_r}\right\rangle\right], \quad \text{uniformly in }\mathbb{R}.
		\]
		This suffices to obtain the desired result, hence the proof is complete.
	\end{proof}
	Note that for every $\phi\in C_b\left(\mathbb{R}^N\right)$ the expression on the right--hand side of \eqref{bel} is continuous in $x$ for every $h\in\mathbb{R}^N$. Indeed, let us fix $x\in\mathbb{R}^N$ and consider $\left(x_n\right)_n\subset\mathbb{R}^N$ such that $x_n\to x$ as $n\to \infty.$ Then, using the same techniques as in the previous proof (cfr. \eqref{pr_i}), together with Lemma \ref{hoc} and a dominated convergence argument, we get (for some $p,q>1$ determined by a generalized Holder's inequality, and $c_{}=c\left(\alpha,p,q,A,B_0,T,N\right)>0$)
	\begin{align*}
		\begin{split}
			\left|\mathbb{E}\left[\frac{1}{L_t-L_s}\left(\phi\left({X}_t^{s,x_n}\right)\int_{s}^{t}\left\langle\left(\sqrt{Q}\right)^{-1}D_h{X}^{s,x_n}_r, dW_{L_r}\right\rangle-\phi\left({X}_t^{s,x}\right)\int_{s}^{t}\left\langle\left(\sqrt{Q}\right)^{-1}D_h{X}^{s,x}_r, dW_{L_r}\right\rangle\right)\right]\right|
			\\\le 
			\norm{\phi}_\infty\mathbb{E}\left[\frac{1}{L_t-L_s}\left|\int_{s}^{t}\left\langle\left(\sqrt{Q}\right)^{-1}\left(D_h{X}^{s,x_n}_r-D_h{X}^{s,x}_r\right), dW_{L_r}\right\rangle\right|\right]\\
			+
			\mathbb{E}\left[\frac{1}{L_t-L_s}\left|\int_{s}^{t}\left\langle\left(\sqrt{Q}\right)^{-1}D_h{X}^{s,x}_r, dW_{L_r}\right\rangle\right|\left|\phi\left({X}^{s,x_n}_t\right)-\phi\left({X}^{s,x}_t\right)\right|\right] 
			\le
			\frac{c_{}}{\left(t-s\right)^{1/\alpha}}\left|\left(\sqrt{Q}\right)^{-1}\right|\\
			\times\!\left[\norm{\phi}_\infty\! \left(\int_{s}^{t}\!\mathbb{E}\left[\left|D_h{X}^{s,x_n}_r-D_h{X}^{s,x}_r\right|^{2\alpha}\right]\!dr\right)^{\frac{1}{2\alpha}}+ \left|h\right|\mathbb{E}\left[\left|\phi\left({X}^{s,x_n}_t\right)-\phi\left({X}^{s,x}_t\right)\right|^q\right]^{\frac{1}{q}}\right]\underset{n\to\infty}{\longrightarrow} 0.
		\end{split}
	\end{align*}
	Therefore, $P_{s,t}\phi\in C_b^1\left(\mathbb{R}^N\right)$ for every $\phi\in C_b\left(\mathbb{R}^N\right).$ At this point, the next result is a straightforward consequence of the Chapman--Kolmogorov equations, the mean value theorem  and \cite[Lemma $7.1.5$]{DPZ2}.
	\begin{lemma}\label{bou}
		Let $\alpha\in\left(\frac{1}{2},1\right)$ and $B_0\in C_b^{0,1}\left(\left[0,T\right]\times\mathbb{R}^N ;\mathbb{R}^N\right)$. Then, for every $\phi\in \mathcal{B}_b\left(\mathbb{R}^N\right)$ and $0\le s<t\le T$, one has $P_{s,t}\phi\in C_b^1\left(\mathbb{R}^N\right)$, and the gradient estimate in \eqref{est_z} holds true.
	\end{lemma}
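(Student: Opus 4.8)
The plan is to combine the regularization result already available for continuous terminal data with an approximation procedure and a bootstrap through the Chapman--Kolmogorov equations. Recall from the discussion preceding the statement that, by Corollary~\ref{cor} and the continuity in $x$ of the right--hand side of \eqref{bel}, one has $P_{s,t}\psi\in C_b^1\left(\mathbb{R}^N\right)$ for every $\psi\in C_b\left(\mathbb{R}^N\right)$ and $0\le s<t\le T$; moreover, letting the smooth approximants converge as in the proof of Corollary~\ref{cor}, the gradient estimate \eqref{est_z} remains valid for such $\psi$. Fix now $\phi\in\mathcal{B}_b\left(\mathbb{R}^N\right)$. By a standard functional (monotone) class argument there is a sequence $\left(\phi_n\right)_n\subset C_b\left(\mathbb{R}^N\right)$ with $\sup_n\norm{\phi_n}_\infty\le\norm{\phi}_\infty$ (truncating if necessary) such that $\phi_n\left(x\right)\to\phi\left(x\right)$ for every $x\in\mathbb{R}^N$; by dominated convergence applied to $\phi_n\left(X^{s,x}_t\right)$, this gives $P_{s,t}\phi_n\left(x\right)\to P_{s,t}\phi\left(x\right)$ for every $x\in\mathbb{R}^N$ and $0\le s\le t\le T$.

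First I would establish the strong Feller property, namely $P_{s,t}\phi\in C_b\left(\mathbb{R}^N\right)$ whenever $\phi\in\mathcal{B}_b\left(\mathbb{R}^N\right)$ and $0\le s<t\le T$. Since $\phi_n\in C_b\left(\mathbb{R}^N\right)$, each $P_{s,t}\phi_n$ belongs to $C_b^1\left(\mathbb{R}^N\right)$, so the mean value theorem together with \eqref{est_z} yields
\[
\left|P_{s,t}\phi_n\left(x\right)-P_{s,t}\phi_n\left(y\right)\right|\le\norm{\nabla^\top P_{s,t}\phi_n}_\infty\left|x-y\right|\le\frac{C_\alpha\left|\left(\sqrt{Q}\right)^{-1}\right|e^{\left(\left|A\right|+\norm{DB_0}_{T,\infty}\right)T}}{\left(t-s\right)^{1/\left(2\alpha\right)}}\norm{\phi}_\infty\left|x-y\right|,\quad x,y\in\mathbb{R}^N,
\]
so the family $\left(P_{s,t}\phi_n\right)_n$ is equibounded and equi--Lipschitz with a constant independent of $n$. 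Letting $n\to\infty$ --- this is the point at which \cite[Lemma $7.1.5$]{DPZ2} is invoked --- the pointwise limit $P_{s,t}\phi$ inherits the same Lipschitz bound, and in particular $P_{s,t}\phi\in C_b\left(\mathbb{R}^N\right)$.

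Then I would bootstrap. Fix $\phi\in\mathcal{B}_b\left(\mathbb{R}^N\right)$, $0\le s<t\le T$ and an intermediate time $r\in\left(s,t\right)$. By the Chapman--Kolmogorov equations for $P$ (a consequence of the flow and Markov properties of $X^{s,x}_t$ recorded in Section~\ref{a_conn}) one has $P_{s,t}\phi=P_{s,r}\left(P_{r,t}\phi\right)$. The previous step gives $P_{r,t}\phi\in C_b\left(\mathbb{R}^N\right)$ with $\norm{P_{r,t}\phi}_\infty\le\norm{\phi}_\infty$, hence applying to the continuous datum $P_{r,t}\phi$ the result recalled in the first paragraph we obtain $P_{s,t}\phi\in C_b^1\left(\mathbb{R}^N\right)$ and
\[
\norm{\nabla^\top P_{s,t}\phi}_\infty=\norm{\nabla^\top P_{s,r}\left(P_{r,t}\phi\right)}_\infty\le\frac{C_\alpha\left|\left(\sqrt{Q}\right)^{-1}\right|e^{\left(\left|A\right|+\norm{DB_0}_{T,\infty}\right)T}}{\left(r-s\right)^{1/\left(2\alpha\right)}}\norm{\phi}_\infty.
\]
Since the left--hand side is independent of $r$, letting $r\uparrow t$ turns this inequality into exactly \eqref{est_z}, which completes the proof.

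The main obstacle is precisely this first reduction. The Bismut--Elworthy--Li formula \eqref{bel} produces a \emph{continuous} gradient only when the terminal datum is continuous --- that is exactly where the continuity in $x$ of its right--hand side was used --- so for merely bounded measurable $\phi$ one cannot differentiate $P_{s,t}\phi$ directly and obtain \eqref{est_z} through a single passage to the limit. Splitting $P_{s,t}=P_{s,r}\circ P_{r,t}$ overcomes this, since $P_{r,t}$ has already turned $\phi$ into a continuous (in fact Lipschitz) function; making the resulting gradient bound sharp then only amounts to optimizing over the intermediate time $r$.
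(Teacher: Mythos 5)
Your argument is correct and is essentially the paper's own proof, which is stated in one line as a consequence of the Chapman--Kolmogorov equations, the mean value theorem and \cite[Lemma $7.1.5$]{DPZ2}: you have simply unpacked those three ingredients, including the final optimization over the intermediate time $r$. The one imprecision is in your first step: an arbitrary $\phi\in\mathcal{B}_b\left(\mathbb{R}^N\right)$ need not be the everywhere--pointwise limit of a single uniformly bounded sequence in $C_b\left(\mathbb{R}^N\right)$ (only Baire class one functions arise that way), so the equi--Lipschitz bound has to be transferred to general Borel $\phi$ by the functional monotone class / $\pi$--$\lambda$ argument you allude to --- which is precisely the content of the cited \cite[Lemma $7.1.5$]{DPZ2} --- rather than along one approximating sequence.
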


	Finally we are  in position to prove Theorem \ref{connection}.
	\begin{myproof}{Theorem}{\ref{connection}}
		Fix $\alpha\in\left(\frac12,1\right),\,0<t\le T,\,f\in C\left(\left[0,T\right];\mathbb{R}^N\right),\,B_0\in C^{0,3}_b\left(\left[0,T\right]\times \mathbb{R}^N;\mathbb{R}^N\right)$ and define $B=B_0-f$; we first consider $\phi\in C_b^3\left(\mathbb{R}^N\right).$ Recalling \eqref{OU}, we introduce the family of integro--differential operators $\big(\widetilde{A}\left(s\right)\big)_{0\le s\le T}$, defined for every $\psi\in C^2_b\left(\mathbb{R}^N\right)$ by
		\begin{equation*}
			\widetilde{A}\left(s\right)\psi\left(x\right)=\left\langle Ax+f\left(s\right), \nabla^\top \psi \left(x\right)\right\rangle + \int_{\mathbb{R}^N}\left[\psi\left(x+\sqrt{Q}z\right)-\psi\left(x\right)-1_D\left(z\right)\nabla \psi\left(x\right)\sqrt{Q}z\right]\nu\left(dz\right),
		\end{equation*}
		where $x\in\mathbb{R}^N$. Let us take $0\le s < t,\,x\in\mathbb{R}^N,$ and observe that by the definition in \eqref{operator} and Corollary \ref{higher}  there exists a constant $C>0$ such that, for every $r_1,\,r_2\in\left[s,t\right]$,
		\begin{multline}\label{functional}
			\sup_{u\in\left[s,t\right]}\left|A\left(u\right)P_{u,t}\phi\left(Z^{s,x}_{r_2}\right)-A\left(u\right)P_{u,t}\phi\left(Z^{s,x}_{r_1}\right)\right|
			\\\le  C
			\left[\left(\left|A\right|\left(1+\left|Z^{s,x}_{r_1}\right|\right)+\norm{B_0}_{1,T}\right)
			+\int_{\mathbb{R}^N}\left(1_D\left(z\right)\left|\sqrt{Q}z\right|^2+1_{D^c}\left(z\right)\right)\nu\left(dz\right)\right]\left|Z^{s,x}_{r_2}-Z^{s,x}_{r_1}\right|.
		\end{multline}
		We study the mapping $\left[s,t\right]\ni r\mapsto R_{s,r}\left(P_{r,t}\phi\right)\left(x\right)$: using \eqref{ste1} and \eqref{functional}, it is easy to argue that it is continuous in its domain by Theorem \ref*{kfb} \emph{\ref{back_k}} coupled with Vitali's and dominated convergence theorems. It is also differentiable, with
		\begin{multline}\label{finally}
			\partial_rR_{s,r}\left(P_{r,t}\phi\right)\left(x\right)=R_{s,r}\left(\widetilde{A}\left(r\right)P_{r,t}\phi\right)\left(x\right)
			-R_{s,r}\left(A\left(r\right)P_{r,t}\phi\right)\left(x\right)\\
			=-
			R_{s,r}\left(\left\langle B\left(r,\cdot\right) ,\nabla^\top P_{r,t}\phi \right\rangle \right)\left(x\right)
			,\quad r\in\left[s,t\right].
		\end{multline}
		Indeed, take  $r\in\left[s,t\right]$ and a generic sequence $\left(r_n\right)_n\subset\left[s,t\right]\setminus\left\{r\right\}$ such that $r_n\to r$ as $n\to \infty$; then 
		\begin{multline*}
			\frac{R_{s,r_n}\left(P_{r_n,t}\phi\right)\left(x\right)-R_{s,r}\left(P_{r,t}\phi\right)\left(x\right)}{r_n-r}=
			R_{s,r_n}\left(\frac{P_{r_n,t}\phi-P_{r,t}\phi}{r_n-r}\right)\left(x\right)+
			\mathbb{E}\left[\frac{P_{r,t}\phi\left(Z^{s,x}_{r_n}\right)-P_{r,t}\phi\left(Z^{s,x}_{r}\right)}{r_n-r}\right]
			\\
			\eqqcolon \mathbf{\upperRomannumeral{1}}_n+\mathbf{\upperRomannumeral{2}}_n.
		\end{multline*}
		We immediately notice that $\mathbf{\upperRomannumeral{2}}_n\to R_{s,r}\left(\widetilde{A}\left(r\right)P_{r,t}\phi\right)\left(x\right)$ as $n\to\infty$ by Theorem \ref*{kfb} \emph{\ref{for_k}} and Corollary \ref{higher}. As for $\mathbf{\upperRomannumeral{1}}_n$, we split it again as follows:
		\[
		\mathbf{\upperRomannumeral{1}}_n=R_{s,r}\left(\frac{P_{r_n,t}\phi-P_{r,t}\phi}{r_n-r}\right)\left(x\right)+
		\mathbb{E}\left[\frac{P_{r_n,t}\phi-P_{r,t}\phi}{r_n-r}\left(Z^{s,x}_{r_n}\right)-\frac{P_{r_n,t}\phi-P_{r,t}\phi}{r_n-r}\left(Z^{s,x}_r\right)\right]\eqqcolon 
		\mathbf{\upperRomannumeral{3}}_n+\mathbf{\upperRomannumeral{4}}_n.
		\]
		By a dominated convergence argument based on \eqref{salvare}, \eqref{ste1}, Corollary \ref{higher} and Theorem \ref*{kfb} \emph{\ref{back_k}} we have $\mathbf{\upperRomannumeral{3}}_n\to-R_{s,r}\left(A\left(r\right)P_{r,t}\phi\right)\left(x\right)$ as $n\to\infty$. Finally we focus on $\mathbf{\upperRomannumeral{4}}_n$, estimating by \eqref{ste1}
		\begin{equation*}
			\left|\mathbf{\upperRomannumeral{4}}_n\right|\le \mathbb{E}\left[\sup_{u\in\left[s,t\right]}\left|A\left(u\right)P_{u,t}\phi\left(Z^{s,x}_{r_n}\right)-A\left(u\right)P_{u,t}\phi\left(Z^{s,x}_{r}\right)\right|\right].
		\end{equation*}
		Notice that the random variables inside the expected value in the previous inequality  converge to $0$ in probability as $n\to \infty$ by \eqref{functional}. Such a convergence is true also in the $L^1-$sense, thanks to the estimates in  \eqref{salvare} and Vitali's convergence theorem. Thus, $\mathbf{\upperRomannumeral{4}}_n\to 0$ as $n\to\infty$, fact which completely shows \eqref{finally}. Observe that $\partial_rR_{s,r}\left(P_{r,t}\phi\right)\left(x\right)$ is continuous in $\left[s,t\right]$ by Vitali's and dominated convergence theorems, the mean value theorem, Corollary \ref{higher} and the continuity of the mapping $r\mapsto \nabla P_{r,t}\phi\left(x\right)$ in $\left[s,t\right]$ (see \eqref{ste1} and the subsequent sentence). Therefore we can integrate it with respect to $r$ on the interval $\left[s,t\right]$ and infer that
		\begin{equation}\label{pr_1}
			P_{s,t}\phi\left(x\right)=R_{s,t}\phi\left(x\right)+\int_{s}^{t}R_{s,r}\left(\left\langle B\left(r,\cdot\right),\nabla^\top P_{r,t}\phi\right\rangle\right)\left(x\right)dr,
		\end{equation}
		which coincides with \eqref{Kolm_mild}.

		Next, we take $\phi\in C_b\left(\mathbb{R}^N\right)$ and consider a sequence $\left(\phi_n\right)_n\subset C^3_b\left(\mathbb{R}^N\right)$ such that $\norm{\phi_n-\phi}_\infty\to 0$ as $n\to \infty$. Since by \eqref{est_z} and Lemma \ref{bou} (for some constant ${C}_\alpha>0$)
		\begin{multline*}
			\left|\int_{s}^{t}R_{s,r}\left(\left\langle B\left(r,\cdot\right), \nabla^\top P_{r,t}\left(\phi_n-\phi\right)\right\rangle\right)\left(x\right)dr\right|\\\le 
			{C}_\alpha
			\norm{B}_{0,T}\norm{\phi_n-\phi}_\infty
			\left|\left(\sqrt{Q}\right)^{-1}\right|e^{\left(\left|A\right|+\norm{DB_0}_{T,\infty}\right)T}\left(\int_{s}^{t}\frac{dr}{\left(t-r\right)^{1/\left(2\alpha\right)}}\right)
			\underset{n\to\infty}{\longrightarrow}0,
		\end{multline*}
		by dominated convergence it is immediate to get the validity of \eqref{pr_1} for $\phi$, as well.
		
		Finally, we tackle the case $\phi\in\mathcal{B}_b\left(\mathbb{R}^N\right)$. We consider $\phi$ to be the indicator function of an open set  to begin with. Then, by Urysohn’s lemma there exists a sequence $\left(\phi_n\right)_n\subset C_b\left(\mathbb{R}^N\right)$ such that $0\le \phi_n\le \phi$ and $\phi_n\to\phi$ pointwise as $n\to\infty$. By construction and dominated convergence we have
		\begin{equation}\label{pr_2}
			\lim_{n\to \infty}\left(P_{s,t}\phi_n\left(x\right)-R_{s,t}\phi_n\left(x\right)\right)= P_{s,t}\phi\left(x\right)-R_{s,t}\phi\left(x\right).
		\end{equation} 
		Now we focus on the integral term in \eqref{pr_1}. Let us fix $y,h\in\mathbb{R}^N,\,r\in\left(s,t\right)$ and $u\in\left(r,t\right)$. Then, exploiting the Chapman--Kolmogorov equations and \eqref{bel}, we write ($n\in\mathbb{N}$)
		\begin{multline}\label{pr_3}
			\left\langle \nabla^\top P_{r,t}\phi_n\left(y\right),h\right\rangle
			=
			\left\langle \nabla^\top\left(P_{r,u}\left(P_{u,t}\phi_n\right)\right)\left(y\right),h\right\rangle\\=
			\mathbb{E}\left[\frac{1}{L_u-L_r}P_{u,t}\phi_n\left(X^{r,y}_u\right)\int_{r}^{u}\left\langle\left(\sqrt{Q}\right)^{-1} D_hX_v^{r,y}, dW_{L_v}\right\rangle \right].
		\end{multline}
		Since, with the same argument as in \eqref{pr_2}, $P_{u,t}\phi_n\to P_{u,t}\phi$ pointwise in $\mathbb{R}^N$ as $n\to\infty,$ and (see, e.g., \eqref{pr_i}) 
		\begin{equation*}
			\sup_{n\in\mathbb{N}}\left|\frac{P_{u,t}\phi_n\left(X^{r,y}_u\right)}{L_u-L_r}\int_{r}^{u}\left\langle\left(\sqrt{Q}\right)^{-1} D_hX_v^{r,y}, dW_{L_v}\right\rangle\right|\le \frac{1}{L_u-L_r}\left|\int_{r}^{u}\left\langle\left(\sqrt{Q}\right)^{-1}D_hX_v^{r,y}, dW_{L_v}\right\rangle\right|\in L^1\left(\mathbb{P}\right),
		\end{equation*}
		we can pass to the limit in \eqref{pr_3} to obtain, by dominated convergence,
		\begin{multline*}
			\lim_{n\to \infty}\left\langle \nabla^\top P_{r,t}\phi_n\left(y\right),h\right\rangle=
			\mathbb{E}\left[\frac{1}{L_u-L_r}P_{u,t}\phi\left(X^{r,y}_u\right)\int_{r}^{u}\left\langle\left(\sqrt{Q}\right)^{-1} D_hX_v^{r,y}, dW_{L_v}\right\rangle \right]\\
			=\left\langle \nabla^\top\left(P_{r,u}\left(P_{u,t}\phi\right)\right)\left(y\right),h\right\rangle
			=	\left\langle \nabla^\top  P_{r,t}\phi\left(y\right),h\right\rangle.
		\end{multline*}
		Observe that the second--to--last equality in the previous equation is due to \eqref{bel} and  Lemma \ref{bou}. As a consequence, for every $r\in\left(s,t\right)$ we infer that
		\begin{equation*}
			\lim_{n\to \infty}R_{s,r}\left(\left\langle B\left(r,\cdot\right),\nabla^\top P_{r,t}\phi_n\right\rangle\right)\left(x\right)=
			R_{s,r}\left(\left\langle B\left(r,\cdot\right),\nabla^\top P_{r,t}\phi\right\rangle\right)\left(x\right),
		\end{equation*}
		where we use once again the dominated convergence theorem, thanks to the next bound that we get using \eqref{est_z} and Lemma \ref{bou}:
		\[
		\norm{\left\langle B\left(r,\cdot\right),\nabla^\top P_{r,t}\phi_n\right\rangle}_\infty
		\le {C}_\alpha 
		\norm{B}_{0,T}\left|\left(\sqrt{Q}\right)^{-1}\right|e^{\left(\left|A\right|+\norm{DB_0}_{T,\infty}\right)T}\frac{1}{\left(t-r\right)^{1/\left(2\alpha\right)}}.
		\]
		Moreover, this inequality also allows to pass the limit under the integral sign, so that we end up with
		\begin{equation}\label{pr_f}
			\lim_{n\to \infty}\int_{s}^{t}R_{s,r}\left(\left\langle B\left(r,\cdot\right),\nabla^\top P_{r,t}\phi_n\right\rangle\right)\left(x\right)dr=\int_{s}^{t}R_{s,r}\left(\left\langle B\left(r,\cdot\right),\nabla^\top P_{r,t}\phi\right\rangle\right)\left(x\right)dr.
		\end{equation}
		Combining \eqref{pr_2}-\eqref{pr_f} we conclude that \eqref{pr_1} holds true for $\phi$, i.e., for every indicator function of an open set. \\
		Note that the passages of the previous step do not require the continuity of the approximating functions $\left(\phi_n\right)_n$, as long as they are equibounded, satisfy \eqref{pr_1} and converge pointwise to $\phi$. Therefore, we can state that \eqref{pr_1} holds true for every $\phi\in\mathcal{B}_b\left(\mathbb{R}^N\right)$ by the functional monotone class theorem (see, e.g., \cite[Theorem $2.12.9$]{boga}).
		
		We notice that, from \eqref{pr_1}, the continuity of $P_{\cdot, t}\phi\left(x\right),\,x\in\mathbb{R}^N$, in the interval $\left[0,t\right)$ can be argued by dominated convergence (see \eqref{continuity} below for an analogous computation). Furthermore, the measurability of $P_{s,t}\phi\left(x\right)$ with respect to $\left(s,x\right)$ is a consequence of the measurability of the stochastic flow $X^{s,x}_t\left(\omega\right)$ and Tonelli's theorem.
		These facts, together with Lemma \ref{bou}  and the gradient estimate in \eqref{est_z}, entail that $P_{t-\diamond,t}\phi\left(\cdot\right)\in\Lambda^\gamma_1\left(0,t\right],\,\gamma=1/\left(2\alpha\right).$ Recalling Theorem \ref{well_pos} the proof is complete.
	\end{myproof}
	\begin{rem}\label{rem1}
		Suppose that the requirements of Theorem \ref{connection} are satisfied. Given $0\le s<t\le T$ and $\phi\in \mathcal{B}_b\left(\mathbb{R}^N\right)$, we consider $r\in\left(s,t\right)$ and call $\widetilde{\phi}=P_{r,t}\phi$. By Theorem~\ref{connection} and the Chapman--Kolmogorov equations,
		\[
		P_{s,t}\phi\left(x\right)=P_{s,r}\widetilde{\phi}\left(x\right)
		=
		u_s^{\widetilde{\phi}}\left(r,x\right),\quad x\in\mathbb{R}^N,
		\]  
		where $u_s^{\widetilde{\phi}}\left(r,x\right)$ is the unique solution of \eqref{Kolm_mild} such that $u^{\widetilde{\phi}}_{r-\diamond}\left(r,\cdot\right)\in\Lambda^\gamma_1\left(0,r\right],\,\gamma=1/\left(2\alpha\right)$. 
		Observing that $\widetilde{\phi}\in C_b^1\left(\mathbb{R}^N\right)$ by Lemma \ref{bou}, we invoke Corollary \ref{cor2} to say that $P_{s,t}\phi\in C^2_b\left(\mathbb{R}^N\right).$ An iteration of this argument shows that $P_{s,t}\phi\in C^4_b\left(\mathbb{R}^N\right)$. In particular, the Kolmogorov backward equation \eqref{kol_back} holds true in the interval $\left[0,t\right)$ for every $\phi\in\mathcal{B}_b\left(\mathbb{R}^N\right)$.
	\end{rem}
	\section{The iteration scheme}\label{ite_sc}
	Let $\alpha\in\left(\frac{1}{2},1\right),\,t\in\left(0,T\right],\,u_0\in\mathcal{B}_b\left(\mathbb{R}^N\right)$ and consider $B_0\in C_b^{0,3}\left(\left[0,T\right]\times \mathbb{R}^N;\mathbb{R}^N\right),\,f\in C\left(\left[0,T\right];\mathbb{R}^N\right)$, so that Theorem \ref{connection} holds true. The proof of Theorem \ref{well_pos} (see, in particular, \eqref{Gamma}-\eqref{u}) suggests to approximate the unique solution $u^{u_0}_s\left(t,x\right)(=P_{s,t}u_0\left(x\right))$ of \eqref{Kolm_mild} such that $u^{u_0}_{t-\diamond}\left(t,\cdot\right)\in \Lambda^\gamma_1\left(0,t\right],\,\gamma=1/\left(2\alpha\right),$ with the iterates
	\begin{equation*}
		\begin{cases}
			u^{n+1}_s\left(t,x\right)=R_{s,t}u_0\left(x\right)+\int_{s}^{t}R_{s,r}\left(\left\langle B\left(r,\cdot\right),\nabla^\top u_r^{n}\left(t,\cdot\right)\right\rangle\right)\left(x\right)dr\\
			u^0_s\left(t,x\right)=R_{s,t}u_0\left(x\right)
		\end{cases},\quad x\in\mathbb{R}^N,\,s\in\left[0,t\right],\,n\in\mathbb{N}\cup \left\{0\right\}.
	\end{equation*}
Here we recall that $B=B_0-f.$  
	If we define $v^0_s\left(t,x\right)= u^0_s\left(t,x\right)$ and $v^{n+1}_s\left(t,x\right)= u^{n+1}_s\left(t,x\right)-u^{n}_s\left(t,x\right),\, n\in\mathbb{N}\cup \left\{0\right\}$, then these new functions satisfy the iteration scheme
	\begin{equation}\label{it_scheme}
		\begin{cases}
			v_s^{n+1}\left(t,x\right)=\int_{s}^{t}R_{s,u}k_{u,t}^{n}\left(x\right)du\\
			k^{n}_{u,t}\left(x\right)=\left\langle B\left(u,x\right), \nabla^\top v_u^{n}\left(t,x\right)\right\rangle\\
			v_s^{0}\left(t,x\right)=R_{s,t}u_0\left(x\right)
		\end{cases},
	\quad x\in\mathbb{R}^N,\,\,s\in\left[0,t\right],\,u\in\left[0,t\right),\,n\in\mathbb{N}\cup\left\{0\right\}.
	\end{equation}
	In the Brownian case, \eqref{it_scheme} has been investigated in \cite{LRF2}. In order to study the convergence of $\sum_{n=0}^{\infty}v^n_s\left(t,x\right)$ to $u^{u_0}_s\left(t,x\right)$ (in a sense that will be clarified later on), we need the next, preliminary result.
	\begin{lemma}\label{cont_k}
		Let $\alpha\in\left(\frac{1}{2},1\right),\,t\in \left(0,T\right],\,n\in\mathbb{N}\cup \left\{0\right\}$ and denote by $\gamma=1/\left(2\alpha\right)$. Then $k^n_{u,t}\in C_b\left(\mathbb{R}^N\right)$ and   $v^n_s\left(t,\cdot\right)\in C_b^1\left(\mathbb{R}^N\right)$ for every $u,\,s\in\left[0,t\right)$.
		
		Moreover, there exists a constant $C=C\left(\alpha,A,Q\right)>0$  such that, for every $n\in\mathbb{N}$ and $s\in\left[0,t\right)$,
		\begin{equation}\label{v_n_est}
			\norm{v^n_{s}\left(t,\cdot\right)}_{\infty}\le C^{n}\norm{B}_{0,T}^{n}\norm{u_0}_{\infty}
			\int_{0}^{t-s}ds_n\int_{0}^{s_n}ds_{n-1}\dots\int_{0}^{s_{2}}ds_1
			\prod_{i=1}^{n}\frac{1}{\left(s_{i+1}-s_{i}\right)^{\gamma}},
		\end{equation}
		and
		\begin{equation}\label{v^n_gr_est}
			\norm{\nabla^\top v^n_{s}\left(t,\cdot\right)}_{\infty}\le C^{n+1}\norm{B}_{0,T}^{n}\norm{u_0}_{\infty}
			\int_{0}^{t-s}ds_n\int_{0}^{s_n}ds_{n-1}\dots\int_{0}^{s_{2}}ds_1
				\prod_{i=0}^{n}\frac{1}{\left(s_{i+1}-s_{i}\right)^{\gamma}},
		\end{equation}
		where $s_0=0$ and $s_{n+1}=t-s$.
	\end{lemma}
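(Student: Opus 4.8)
The plan is to prove both claims simultaneously by induction on $n$, using at each step only the recursion \eqref{it_scheme} and the three properties of the Ornstein--Uhlenbeck semigroup $R$ recorded in Section~\ref{pre}: for $\psi\in\mathcal{B}_b(\mathbb{R}^N)$ and $0\le s<u\le T$ one has $R_{s,u}\psi\in C^1_b(\mathbb{R}^N)$, $\norm{R_{s,u}\psi}_\infty\le\norm{\psi}_\infty$, and $\norm{\nabla^\top R_{s,u}\psi}_\infty\le C\norm{\psi}_\infty(u-s)^{-\gamma}$ by \eqref{est12}, with $C=C(\alpha,A,Q)$. The inductive hypothesis I carry along is the conjunction of all four assertions of the lemma at level $n$; at $n=0$ it reads $v^0_s(t,\cdot)=R_{s,t}u_0\in C^1_b(\mathbb{R}^N)$ for $s<t$ (since $u_0\in\mathcal{B}_b$), together with $\norm{v^0_s(t,\cdot)}_\infty\le\norm{u_0}_\infty$ and $\norm{\nabla^\top v^0_s(t,\cdot)}_\infty\le C\norm{u_0}_\infty(t-s)^{-\gamma}$, i.e.\ \eqref{v_n_est}--\eqref{v^n_gr_est} with the empty product interpreted as $1$, so the base case is immediate.

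For the inductive step I would first observe that, because $B=B_0-f$ with $B_0\in C^{0,3}_b$ and $f$ continuous, $B(u,\cdot)$ is continuous and bounded; combined with $v^n_u(t,\cdot)\in C^1_b(\mathbb{R}^N)$ this gives $k^n_{u,t}=\langle B(u,\cdot),\nabla^\top v^n_u(t,\cdot)\rangle\in C_b(\mathbb{R}^N)$ for $u\in[0,t)$, with $\norm{k^n_{u,t}}_\infty\le\norm{B}_{0,T}\norm{\nabla^\top v^n_u(t,\cdot)}_\infty$. Then, for $s<u<t$, applying the properties of $R$ to $k^n_{u,t}\in\mathcal{B}_b(\mathbb{R}^N)$ gives $R_{s,u}k^n_{u,t}\in C^1_b(\mathbb{R}^N)$ with $\norm{R_{s,u}k^n_{u,t}}_\infty\le\norm{B}_{0,T}\norm{\nabla^\top v^n_u(t,\cdot)}_\infty$ and $\norm{\nabla^\top R_{s,u}k^n_{u,t}}_\infty\le C\norm{B}_{0,T}(u-s)^{-\gamma}\norm{\nabla^\top v^n_u(t,\cdot)}_\infty$. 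Inserting \eqref{v^n_gr_est} for $\norm{\nabla^\top v^n_u(t,\cdot)}_\infty$ (with upper endpoint $t-u$) and substituting $u=t-s_{n+1}$, so that $u-s=(t-s)-s_{n+1}$ and $t-u=s_{n+1}$, turns both right-hand sides into explicit functions of $u$ that are integrable on $(s,t)$; I would then deduce, by dominated convergence, that $v^{n+1}_s(t,x)=\int_s^t R_{s,u}k^n_{u,t}(x)\,du$ is well defined and bounded, that it is continuously differentiable in $x$ with $\nabla^\top v^{n+1}_s(t,x)=\int_s^t\nabla^\top R_{s,u}k^n_{u,t}(x)\,du$ — hence $v^{n+1}_s(t,\cdot)\in C^1_b(\mathbb{R}^N)$, and then $k^{n+1}_{u,t}\in C_b(\mathbb{R}^N)$ as above — and, finally, integrating the two displayed bounds over $u\in(s,t)$ and performing the same change of variables, that \eqref{v^n_gr_est} holds verbatim at level $n+1$ and \eqref{v_n_est} holds at level $n+1$ after the reflection $s_i\mapsto(t-s)-s_{n+2-i}$ (with $s_0=0$, $s_{n+2}=t-s$), under which the nested integral is invariant. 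This closes the induction.

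The hard part is the integrability on $(s,t)$ of the dominating functions: near $u=s$ this is just the factor $(u-s)^{-\gamma}$, harmless since $\gamma<1$, but near $u=t$ one must control the nested integral over $(0,t-u)$ appearing in \eqref{v^n_gr_est}. Iterating the elementary identity $\int_0^L(L-r)^{-\gamma}r^a\,dr=B(1-\gamma,a+1)L^{a+1-\gamma}$, which holds precisely when $a>-1$, one checks that this nested integral vanishes at $u=t$ like a power of $t-u$ whose exponent stays $>-1$ at every integration step — again thanks to $\gamma<1$ — and that multiplying by $(u-s)^{-\gamma}$ at the next level preserves integrability; this is exactly where the standing assumption $\alpha\in(\tfrac12,1)$, i.e.\ $\gamma=1/(2\alpha)\in(0,1)$, enters, in the same spirit as \eqref{frac_est}--\eqref{integrala}. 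A secondary, routine point is the joint measurability in $(u,x)$ of $R_{s,u}k^n_{u,t}(x)$ and of its gradient, needed to invoke Fubini and dominated convergence; it is propagated along the induction from the explicit representation \eqref{explica} of the OU process, the Bismut-type formula \eqref{no_bel}, and the joint measurability of $B_0$.
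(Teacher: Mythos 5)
Your proposal is correct and follows essentially the same route as the paper: an induction driven by the gradient estimate \eqref{est12} for $R_{s,u}$, with the singular time--integrals controlled by the Beta--function identity exactly as in \eqref{integrala}, which is where $\gamma=1/(2\alpha)<1$ enters. The only (immaterial) difference is bookkeeping: the paper carries the induction on a bound for $\norm{k^n_{u,t}}_\infty$ written with nested integrals increasing towards $t$, and then obtains \eqref{v_n_est}--\eqref{v^n_gr_est} by the change of variables you call the reflection, whereas you induct directly on \eqref{v_n_est}--\eqref{v^n_gr_est}.
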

	\noindent We notice that the constant $C$ in \eqref{v_n_est}-\eqref{v^n_gr_est} is the same as the one appearing in the gradient estimate \eqref{est12}.
	\begin{proof}
		We proceed  by induction to prove that, for every $u,s\in\left[0,t\right)$ and $n\in\mathbb{N}\cup\left\{0\right\}$, one has  $v^{n}_{s}\left(t,\cdot\right)\in C_b^1\left(\mathbb{R}^N\right), \,k^{n}_{u,t}\in C_b\left(\mathbb{R}^N\right)$ and 
		\begin{equation}\label{k^n_gr_est}
			\norm{k^n_{u,t}}_{\infty}\le C^{n+1}\norm{B}_{0,T}^{n+1}\norm{u_0}_{\infty}
			\int_{u}^{t}ds_1\int_{s_1}^{t}ds_{2}\dots\int_{s_{n-1}}^{t}ds_n
			\prod_{i=0}^{n}\frac{1}{\left(s_{i+1}-s_{i}\right)^{\gamma}},
		\end{equation}
		where $C=C\left(\alpha,A,Q\right)>0$ is the same constant as in \eqref{est12}. In \eqref{k^n_gr_est}, $s_0=u$ and $s_{n+1}=t.$ The estimates in \eqref{v_n_est}-\eqref{v^n_gr_est} are an immediate consequence of \eqref{k^n_gr_est} upon shifting the domain of integration and applying Tonelli's theorem.
		\\For $n=0$, the smoothing effect of the time--dependent Markov semigroup $R$ guarantees that $v^0_s\left(t,\cdot\right)\in C_b^1\left(\mathbb{R}^N\right)$, which combined with the continuity of $B$ yields $k^0_{u,t}\in C_b\left(\mathbb{R}^N\right),$ with 
		\begin{equation}\label{l3_1}
			\norm{k^0_{u,t}}_\infty\le C\norm{B}_{0,T}\norm{u_0}_\infty\frac{1}{\left(t-u\right)^\gamma}.
		\end{equation}
		To fix the ideas, consider the case $n=1$. Since $k^0_{u,t}\in C_b\left(\mathbb{R}^N\right)$ for every $0\le u<t$, the dominated convergence theorem, \eqref{it_scheme} and \eqref{l3_1} imply that $v^1_s\left(t,\cdot\right)\in C^1_b\left(\mathbb{R}^N\right),$ with $\nabla v_s^{1}\left(t,x\right)=\int_{s}^{t}\nabla R_{s,u}k_{u,t}^{0}\left(x\right)du,\,x\in\mathbb{R}^N$. Hence $k^1_{u,t}\in C_b\left(\mathbb{R}^N\right)$, and by \eqref{est12}-\eqref{l3_1} we get
		\[
		\norm{k^1_{u,t}}_\infty\le C^2\norm{B}_{0,T}^2\norm{u_0}_\infty
		\int_{u}^{t}ds_1\frac{1}{\left(s_1-u\right)^\gamma\left(t-s_1\right)^\gamma}.
		\]
		Suppose now that our statement holds true at step $n\in\mathbb{N}$. Then by the same argument as before and \eqref{k^n_gr_est} $v^{n+1}_s\left(t,\cdot\right)\in C_b^1\left(\mathbb{R}^N\right)$, with $\nabla v_s^{n+1}\left(t,x\right)=\int_{s}^{t}\nabla R_{s,u}k_{u,t}^{n}\left(x\right)du$. Therefore $k^{n+1}_{u,t}\in C_b\left(\mathbb{R}^N\right)$, with
		\begin{multline*}
			\norm{k^{n+1}_{u,t}}_{\infty}\le C \norm{B}_{0,T} \int_{u}^{t}ds_{1}\frac{1}{\left(s_{1}-u\right)^{\gamma}}\norm{k^n_{s_{1},t}}_{\infty}\\
			\le
			C^{n+2}\norm{B}^{n+2}_{0,T}\norm{u_0}_{\infty}\int_{u}^{t}ds_{1}\int_{s_{1}}^{t}ds_2\dots\int_{s_n}^{t}ds_{n+1}\prod_{i=0}^{n+1}\frac{1}{\left(s_{i+1}-s_{i}\right)^{\gamma}},
		\end{multline*} 
		where in the last inequality we apply the inductive hypothesis and consider $s_0=u,\,s_{n+2}=t$.
		Thus, the claim is completely proved.
	\end{proof}
	Another important property of the functions $v^n_\cdot\left(t,x\right),\,x\in\mathbb{R}^N$, is the continuity in the interval $\left[0,t\right)$. In the case $n=0$, this follows from the property of $R$ discussed in Section \ref{pre}\,; for a generic $n\in\mathbb{N},$ it can be argued by \eqref{k^n_gr_est} and dominated convergence writing
	\begin{equation}\label{continuity}
		v^n_s\left(t,x\right)=\int_{0}^{t}1_{\left\{u>s\right\}}R_{s,u}k^{n-1}_{u,t}\left(x\right)du.
	\end{equation}
	Thanks to the estimates in \eqref{v_n_est}-\eqref{v^n_gr_est},  the convergence of the iteration scheme \eqref{it_scheme} is proved in the same way as in the Brownian case with no time--shift, see \cite[Section $2.4$]{LRF1}. Overall, the next result is true.
	\begin{theorem}
		For every $\alpha\in\left(\frac{1}{2},1\right)$ and $0<t\le T$, the series $\sum_{n=0}^\infty v^n_s\left(t,x\right)$ converges uniformly in $\left[0,t\right]\times \mathbb{R}^N$, and the series $\sum_{n=0}^\infty \nabla^\top v^n_s\left(t,x\right)$ converges uniformly in $\left[0,t_0\right]\times \mathbb{R}^N$, for every $t_0\in\left(0,t\right)$. In particular,
		\begin{equation*}
			\sum_{n=0}^\infty v^n_s\left(t,x\right)=
			u^{u_0}_s\left(t,x\right),\quad s\in\left[0,t\right],\,x\in\mathbb{R}^N,
		\end{equation*}
		where $u^{u_0}_s\left(t,x\right)$ is the unique solution  of \eqref{Kolm_mild} such that $u^{u_0}_{t-\diamond}\left(t,\cdot\right)\in \Lambda^\gamma_1\left(0,t\right],\,\gamma=1/\left(2\alpha\right)$.
	\end{theorem}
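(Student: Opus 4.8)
The plan is to convert the pointwise bounds of Lemma~\ref{cont_k} into a convergent majorant series by evaluating \emph{exactly} the iterated simplex integrals on the right-hand sides of \eqref{v_n_est} and \eqref{v^n_gr_est}. Throughout write $\gamma=1/(2\alpha)\in(0,1)$. Integrating successively in $s_1,\dots,s_n$ over the simplex $0<s_1<\dots<s_n<t-s$ and using at each step the Euler integral $\int_0^x r^{a-1}(x-r)^{b-1}\,dr=B(a,b)\,x^{a+b-1}$ together with $B(a,b)=\Gamma(a)\Gamma(b)/\Gamma(a+b)$, the arising product of Beta functions telescopes. For the value estimate one obtains, with $s_0=0$ and $s_{n+1}=t-s$,
\[
\int_{0}^{t-s}\!\! ds_n\int_{0}^{s_n}\!\! ds_{n-1}\cdots\int_{0}^{s_2}\!\! ds_1\;\prod_{i=1}^{n}\frac{1}{(s_{i+1}-s_i)^{\gamma}}
=\frac{\Gamma(1-\gamma)^{n}}{\Gamma\big(n(1-\gamma)+1\big)}\,(t-s)^{n(1-\gamma)},
\]
so that, since $0<t-s\le T$,
\[
\norm{v^n_s(t,\cdot)}_\infty\le\norm{u_0}_\infty\,\frac{\big(C\norm{B}_{0,T}\,\Gamma(1-\gamma)\,T^{1-\gamma}\big)^{n}}{\Gamma\big(n(1-\gamma)+1\big)},\qquad s\in[0,t),\ n\ge0,
\]
with $C$ the constant of \eqref{est12} (for $s=t$ the left-hand side equals $\norm{u_0}_\infty$ when $n=0$ and $0$ when $n\ge1$). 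The same computation applied to \eqref{v^n_gr_est}, where the product carries the additional singular factor $(s_1-s_0)^{-\gamma}=s_1^{-\gamma}$ (so the first integration yields $B(1-\gamma,1-\gamma)\,s_2^{1-2\gamma}$ and the Beta arguments become $k(1-\gamma)$, $k=1,\dots,n$), gives the value $\Gamma(1-\gamma)^{n+1}\Gamma\big((n+1)(1-\gamma)\big)^{-1}(t-s)^{n(1-\gamma)-\gamma}$; hence, factoring $(t-s)^{n(1-\gamma)-\gamma}=(t-s)^{-\gamma}(t-s)^{n(1-\gamma)}\le(t-t_0)^{-\gamma}T^{n(1-\gamma)}$ for $s\in[0,t_0]$ with $t_0\in(0,t)$,
\[
\norm{\nabla^\top v^n_s(t,\cdot)}_\infty\le\frac{C\,\Gamma(1-\gamma)\,\norm{u_0}_\infty}{(t-t_0)^{\gamma}}\,\frac{\big(C\norm{B}_{0,T}\,\Gamma(1-\gamma)\,T^{1-\gamma}\big)^{n}}{\Gamma\big((n+1)(1-\gamma)\big)},\qquad s\in[0,t_0],\ n\ge0.
\]

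By Stirling's formula the denominators $\Gamma\big(n(1-\gamma)+1\big)$ and $\Gamma\big((n+1)(1-\gamma)\big)$ grow faster than any geometric progression (the two majorant series are Mittag--Leffler functions, hence entire), so both converge regardless of the size of $C\norm{B}_{0,T}\Gamma(1-\gamma)T^{1-\gamma}$. This immediately yields uniform convergence of $\sum_{n\ge0}v^n_s(t,x)$ on $[0,t]\times\mathbb{R}^N$ and of $\sum_{n\ge0}\nabla^\top v^n_s(t,x)$ on $[0,t_0]\times\mathbb{R}^N$ for every $t_0\in(0,t)$. Denote the first sum by $u^{u_0}_s(t,x)$. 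Since each $v^n_s(t,\cdot)$ lies in $C^1_b(\mathbb{R}^N)$ and both the functions and their gradients converge uniformly in $x$, we get $u^{u_0}_s(t,\cdot)\in C^1_b(\mathbb{R}^N)$ with $\nabla^\top u^{u_0}_s(t,\cdot)=\sum_n\nabla^\top v^n_s(t,\cdot)$ for $s\in[0,t)$; the continuity of each $v^n_\cdot(t,x)$ on $[0,t)$ (cf.\ \eqref{continuity}) together with the local-in-$s$ uniform convergence gives continuity of $u^{u_0}_\cdot(t,x)$ on $[0,t)$; and since the $n=0$ summand of the gradient bound is $C\norm{u_0}_\infty(t-s)^{-\gamma}$, summing the two displays above yields $(t-s)^\gamma\norm{u^{u_0}_s(t,\cdot)}_1\le K\norm{u_0}_\infty$ for a constant $K=K(\alpha,A,Q,\norm{B}_{0,T},T)$, i.e.\ $u^{u_0}_{t-\diamond}(t,\cdot)\in\Lambda^\gamma_1(0,t]$.

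It then remains to verify that $u^{u_0}_s(t,x)$ solves \eqref{Kolm_mild}. Summing the recursion \eqref{it_scheme} over $n=0,\dots,M$ gives
\[
\sum_{n=0}^{M+1}v^n_s(t,x)=R_{s,t}u_0(x)+\int_{s}^{t}R_{s,u}\Big(\big\langle B(u,\cdot),\nabla^\top\!\sum_{n=0}^{M}v^n_u(t,\cdot)\big\rangle\Big)(x)\,du;
\]
to pass to the limit under the integral one invokes the bound $\sum_{n\ge0}\norm{\langle B(u,\cdot),\nabla^\top v^n_u(t,\cdot)\rangle}_\infty\le\norm{B}_{0,T}\sum_n\norm{\nabla^\top v^n_u(t,\cdot)}_\infty\le C'(t-u)^{-\gamma}$, which is integrable on $(s,t)$ because $\gamma<1$, so that dominated convergence produces
\[
u^{u_0}_s(t,x)=R_{s,t}u_0(x)+\int_{s}^{t}R_{s,u}\big(\langle B(u,\cdot),\nabla^\top u^{u_0}_u(t,\cdot)\rangle\big)(x)\,du,
\]
which is \eqref{Kolm_mild}. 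Combined with $u^{u_0}_{t-\diamond}(t,\cdot)\in\Lambda^\gamma_1(0,t]$, the uniqueness statement of Theorem~\ref{well_pos} identifies $u^{u_0}_s(t,x)$ as \emph{the} solution of \eqref{Kolm_mild} in $\Lambda^\gamma_1(0,t]$, and Theorem~\ref{connection} then gives $u^{u_0}_s(t,x)=P_{s,t}u_0(x)$, completing the argument.

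The one step that needs genuine care, and which I expect to be the main obstacle, is the endpoint $u=t$: already the $n=0$ gradient $\nabla^\top v^0_u(t,\cdot)$ blows up like $(t-u)^{-\gamma}$ there, so uniform convergence of the gradient series cannot be expected up to $t$ and the interchange of $\sum$ and $\int$ in the last display must be justified through the \emph{integrable singular majorant} $C'(t-u)^{-\gamma}$ rather than via uniform convergence. The only other delicate ingredient is the exact evaluation of the Dirichlet-type iterated integrals: one must track the exponents through the successive integrations and check that the telescoping of $\prod_{k}B\big((k-1)(1-\gamma)+1,\,1-\gamma\big)$ (resp.\ $\prod_k B\big(k(1-\gamma),\,1-\gamma\big)$ in the gradient case) collapses precisely to the reciprocal Gamma factor displayed above — it is this super-geometrically growing Gamma denominator that makes the whole scheme summable.
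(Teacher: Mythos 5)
Your proposal is correct and follows essentially the same route as the paper, which itself only cites the estimates \eqref{v_n_est}--\eqref{v^n_gr_est} and defers the convergence argument to \cite[Section 2.4]{LRF1}; your Beta--Gamma evaluation of the simplex integrals yielding a Mittag--Leffler majorant, the summation of the recursion \eqref{it_scheme}, the dominated-convergence passage through the integrable singularity $(t-u)^{-\gamma}$, and the appeal to the uniqueness in Theorem \ref{well_pos} are precisely the details that reference supplies.
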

	\section{The first term of the iteration scheme}\label{first}
	Let $\alpha\in\left(\frac{1}{2},1\right)$. The goal of this section is to study the first term $v_s^1\left(t,x\right)=\int_{s}^{t}R_{s,u}k^0_{u,t}\left(x\right)du$ of \eqref{it_scheme} for every $0\le s <t\le T$ and $x\in\mathbb{R}^N$. In particular, starting from 
	\begin{equation}\label{k^0_def}
		k^0_{u,t}\left(y\right)=\left\langle B\left(u,y\right),\nabla^\top R_{u,t}u_0\left(y\right)\right\rangle,\quad y\in \mathbb{R}^N,\,u\in\left(s,t\right),
	\end{equation}
	we want to find an alternative, explicit expression (see Lemma \ref{l13}) for 
	\begin{equation}\label{1goal}
		R_{s,u}k^0_{u,t}\left(x\right)=\mathbb{E}\left[k^0_{u,t}\left(Z_{u}^{s,x}\right)\right].
	\end{equation} 
In order to do this, we propose an approach which at first analyzes a deterministic time--shift, and then allows to recover the subordinated Brownian motion case by conditioning with respect to $\mathcal{F}^L$. The results of this part represent the base case for the induction argument that we will develop to compute the general term $v^{n+1}_s \left(t, x\right), \,n\ge 1$ (see Section \ref{sec_3}).
	
	\subsection{Deterministic time--shift}\label{sub1.1}	
	Denote by $\mathbb{S}$ the set of real--valued, strictly increasing càdlàg functions defined on $\mathbb{R}_+$ and  starting at\,$0$. Take $\ell\in\mathbb{S}$ and note that $W_\ell=\left(W_{\ell_t}\right)_{t\ge0 }$ is a càdlàg martingale with respect to the filtration $\left(\mathcal{F}^W_{\ell_t}\right)_{t\ge0}$, where $\left(\mathcal{F}^W_t\right)_{t\ge0}$ is the minimal augmented filtration generated by $W$. For every $x\in\mathbb{R}^N$ and $0\le s <T$, the  OU process $\left(Z_t^\ell\left(s,x\right)\right)_{t\in\left[s,T\right]}$ is the unique, càdlàg solution of the linear SDE 
	\[
	dZ^{\ell}_t\left(s,x\right)=\left(AZ^{\ell}_t\left(s,x\right)+f\left(t\right)\right)dt+\sqrt{Q}\,dW_{\ell_t},
	\quad Z^{\ell}_s\left(s,x\right)=x.
	\]
	It can be expressed with a variation of constants formula as follows: 
	\begin{equation*}
		Z^\ell_t\left(s,x\right)=e^{\left(t-s\right)A}x+\int_{s}^{t}e^{\left(t-r\right)A}f\left(r\right)dr+\int_{s}^{t}e^{\left(t-r\right)A}\sqrt{Q}\,dW_{\ell_r},\quad t\in\left[s,T\right].
	\end{equation*}
	For every $0\le s<t\le T$, define 
	$I^\ell_{s,t}=\int_{s}^te^{2\left(t-r\right)A}Q\,d\ell_r\in\mathbb{R}^{N\times N}$. It is possible to  argue as in \cite[Equation~$\left(12\right)$]{AB} to deduce that 
	\[
	Z^\ell_{t}\left(s,x\right)\sim\mathcal{N}\left(e^{\left(t-s\right)A}x+F_{s,t},\,I^\ell_{s,t}\right).
	\] 
	Note that, for every $0\le s< u< t\le T$, 
	\[
	Z^\ell_t\left(s,x\right)=e^{\left(t-u\right)A}Z^\ell_{u}\left(s,x\right)
	+F^{}_{u,t}
	+\int_{u}^{t}e^{\left(t-r\right)A}\sqrt{Q}\,dW_{\ell_r},\quad \mathbb{P}-\text{a.s.,}
	\]
	therefore $\left(Z^\ell\left(s,x\right)\right)_{x\in\mathbb{R}^N}$ is a family of  $\left(\mathcal{F}_{\ell_t}\right)_{t\in\left[s,T\right]}-$Markov processes as $s$ varies in $\left[0,T\right)$. In particular, its transition probability kernels $\mu^{\ell}_{u,t}\colon \mathbb{R}^N\times \mathcal{B}\left(\mathbb{R}^N\right)\to\left[0,1\right]$ are  
	\begin{equation}\label{trans}
		\mu_{u,t}^{\ell}\left(y,\cdot\right)=\mathcal{N}\left(e^{\left(t-u\right)A}y+F^{}_{u,t},I^{\ell}_{u,t}\right),\quad y\in\mathbb{R}^N.
	\end{equation}
	In the sequel, we denote by $\phi_{u,t}^\ell\left(y,\cdot\right)$ the density of $\mu^\ell_{u,t}\left(y,\cdot\right).$
	Straightforward changes to  \cite[Theorem $4$]{AB} ensure that, for any $0\le s<t\le T,$ the function $\mathbb{E}\left[u_0\left(Z^\ell_t\left(s,\cdot\right)\right)\right]\in C_b^1\left(\mathbb{R}^N\right)$, with  derivative at any point $x\in\mathbb{R}^N$ in every direction $h\in\mathbb{R}^N$ given by 
	\begin{equation}\label{der_deter}
		\left\langle\nabla^\top\mathbb{E}\left[u_0\left(Z^\ell_t\left(s,x\right)\right)\right],h\right\rangle
		=
		\mathbb{E}\left[u_0\left(Z_t^\ell\left(s,x\right)\right)\left\langle \left(I_{s,t}^\ell\right)^{-1}e^{\left(t-s\right)A}h,Z^\ell_t\left(s,x\right)-e^{\left(t-s\right)A}x-F^{}_{s,t}\right\rangle\right].
	\end{equation}
	
	With all these preliminaries in mind, we fix $\ell^0\in\mathbb{S},\,0\le u<t\le T$ and define --by analogy with \eqref{k^0_def}-- the function 
	\begin{equation}\label{k0,1}
		k^{\ell^0}_{u,t}\left(y\right)=
		\left\langle B\left(u,y\right),\nabla^\top\mathbb{E}\left[u_0\left(Z^{\ell^0}_t\left(u,y\right)\right)\right]\right\rangle,\quad y\in\mathbb{R}^N.
	\end{equation}
	Note that $k^{\ell^0}_{u,t}\in C_b\left(\mathbb{R}^N\right)$ because $B\left(u,\cdot\right)$ is continuous and bounded, as well. The next claim provides us an analogue of \eqref{1goal} in this framework. 
	\begin{lemma}\label{det_time_1}
		Consider $0\le s<t\le T$. Then for every $x\in\mathbb{R}^N,\,u\in\left(s,t\right)$ and $\ell^0,\ell^1\in\mathbb{S}$, one has, $\mathbb{P}-$a.s.,
			\begin{align}
			\begin{split}\label{cond_k01}
				k^{\ell^0}_{u,t}\left(Z^{\ell^1}_u\left(s,x\right)\right)\\
				=\mathbb{E}\bigg[u_0\left(\left(I_{u,t}^{\ell^0}\right)^{\frac{1}{2}}\left(I_{u,t}^{\ell^1}\right)^{-\frac{1}{2}}\left(Z^{\ell^1}_{t}\left(s,x\right)-e^{\left(t-u\right)A}Z^{\ell^1}_{u}\left(s,x\right)-F^{}_{u,t}\right)+F^{}_{u,t}+e^{\left(t-u\right)A}Z^{\ell^1}_{u}\left(s,x\right)\right)\\
				\!\times\!\left\langle \left(I^{\ell^0}_{u,t}\right)^{-\frac{1}{2}}e^{\left(t-u\right)A}B\left(u,Z_{u}^{\ell^1}\left(s,x\right)\right),\left(I^{\ell^1}_{u,t}\right)^{\!\!-\frac{1}{2}}\!\!\left(Z^{\ell^1}_{t}\left(s,x\right)\!-e^{\left(t-u\right)A}Z^{\ell^1}_{u}\left(s,x\right)-F^{}_{u,t}\right)\right\rangle\!\Big|
				\sigma\left(Z^{\ell^1}_{u}\left(s,x\right)\right)\!\bigg].
			\end{split}
		\end{align}
	\end{lemma}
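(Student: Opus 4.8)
The plan is to deduce \eqref{cond_k01} from the Bismut--type differentiation formula \eqref{der_deter} by expressing both sides as Gaussian integrals against a standard normal vector and matching them term by term.

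First I would obtain a closed form for the deterministic map $k^{\ell^0}_{u,t}$. Applying \eqref{der_deter} (for the OU process started at $u$) in the direction $h=B\left(u,y\right)$, and recalling the definition \eqref{k0,1}, gives
\[
k^{\ell^0}_{u,t}\left(y\right)=\mathbb{E}\left[u_0\!\left(Z^{\ell^0}_{t}\left(u,y\right)\right)\left\langle\left(I^{\ell^0}_{u,t}\right)^{-1}e^{\left(t-u\right)A}B\left(u,y\right),\,Z^{\ell^0}_{t}\left(u,y\right)-e^{\left(t-u\right)A}y-F_{u,t}\right\rangle\right].
\]
By \eqref{trans} the vector $Z^{\ell^0}_{t}\left(u,y\right)-e^{\left(t-u\right)A}y-F_{u,t}$ is $\mathcal{N}\!\left(0,I^{\ell^0}_{u,t}\right)$--distributed, so I would write it as $\left(I^{\ell^0}_{u,t}\right)^{1/2}\eta$ with $\eta\sim\mathcal{N}\!\left(0,\text{Id}\right)$; using that $I^{\ell^0}_{u,t}$ is diagonal, hence symmetric positive--definite (so $\left(I^{\ell^0}_{u,t}\right)^{1/2}$ is symmetric and $\left(I^{\ell^0}_{u,t}\right)^{1/2}\left(I^{\ell^0}_{u,t}\right)^{-1}=\left(I^{\ell^0}_{u,t}\right)^{-1/2}$), and moving this factor across the scalar product, yields
\[
k^{\ell^0}_{u,t}\left(y\right)=\mathbb{E}\left[u_0\!\left(\left(I^{\ell^0}_{u,t}\right)^{\frac12}\eta+e^{\left(t-u\right)A}y+F_{u,t}\right)\left\langle\left(I^{\ell^0}_{u,t}\right)^{-\frac12}e^{\left(t-u\right)A}B\left(u,y\right),\,\eta\right\rangle\right],\qquad y\in\mathbb{R}^N.
\]

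Next I would set $Y=Z^{\ell^1}_{u}\left(s,x\right)$ and $\zeta=Z^{\ell^1}_{t}\left(s,x\right)-e^{\left(t-u\right)A}Y-F_{u,t}$, and exploit the Markov structure of $Z^{\ell^1}$. The variation of constants formula gives $\zeta=\int_{u}^{t}e^{\left(t-r\right)A}\sqrt{Q}\,dW_{\ell^1_r}$, which is $\mathcal{N}\!\left(0,I^{\ell^1}_{u,t}\right)$--distributed and, since $W_{\ell^1}$ has independent increments with respect to $\left(\mathcal{F}^W_{\ell^1_t}\right)_{t\ge0}$ and $Y$ is $\mathcal{F}^W_{\ell^1_u}$--measurable by the variation of constants formula, independent of $\sigma\!\left(Y\right)$ (indeed of the whole $\mathcal{F}^W_{\ell^1_u}$). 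By the substitution rule for conditional expectations (freezing lemma), the right--hand side of \eqref{cond_k01} therefore equals $g\!\left(Y\right)$, where $g\!\left(y\right)$ is the expectation of the very same integrand with $Y$ replaced by the deterministic point $y$ and $\zeta$ integrated out. Changing variables $\eta=\left(I^{\ell^1}_{u,t}\right)^{-1/2}\zeta\sim\mathcal{N}\!\left(0,\text{Id}\right)$, the argument of $u_0$ becomes $\left(I^{\ell^0}_{u,t}\right)^{1/2}\eta+e^{\left(t-u\right)A}y+F_{u,t}$ and the scalar product becomes $\left\langle\left(I^{\ell^0}_{u,t}\right)^{-1/2}e^{\left(t-u\right)A}B\left(u,y\right),\eta\right\rangle$; hence $g\!\left(y\right)$ coincides with the closed form of $k^{\ell^0}_{u,t}\left(y\right)$ found above, so that $g\!\left(Y\right)=k^{\ell^0}_{u,t}\!\left(Z^{\ell^1}_{u}\left(s,x\right)\right)$ almost surely, which is exactly \eqref{cond_k01}.

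I expect the delicate points to be the independence of $\zeta$ from $\sigma\!\left(Z^{\ell^1}_{u}\left(s,x\right)\right)$ --which hinges on $W_{\ell^1}$ being a martingale with independent increments for $\left(\mathcal{F}^W_{\ell^1_t}\right)$ and on the $\mathcal{F}^W_{\ell^1_u}$--measurability of $Z^{\ell^1}_{u}\left(s,x\right)$-- together with the clean application of the freezing lemma; everything else is routine bookkeeping with Gaussian integrals, where the only subtlety is keeping track of the symmetric square roots of $I^{\ell}_{u,t}$ and of the transposes when matrices are moved across $\left\langle\cdot,\cdot\right\rangle$. The invertibility of $I^{\ell^0}_{u,t}$ and $I^{\ell^1}_{u,t}$ needed throughout is guaranteed by $\ell^0,\ell^1\in\mathbb{S}$ being strictly increasing.
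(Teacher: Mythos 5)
Your proposal is correct and follows essentially the same route as the paper: both start from the derivative formula \eqref{der_deter} applied in the direction $h=B\left(u,y\right)$, convert the resulting $\mathcal{N}\left(0,I^{\ell^0}_{u,t}\right)$ integral into an $\mathcal{N}\left(0,I^{\ell^1}_{u,t}\right)$ integral via the linear map $\left(I^{\ell^0}_{u,t}\right)^{1/2}\left(I^{\ell^1}_{u,t}\right)^{-1/2}$, and then identify the conditional expectation. The only (cosmetic) difference is that you standardize both Gaussians to $\mathcal{N}\left(0,\mathrm{Id}\right)$ and invoke independence of the increment plus the freezing lemma, whereas the paper manipulates the transition densities $\phi^{\ell^0}_{u,t},\phi^{\ell^1}_{u,t}$ explicitly and concludes via the disintegration formula for the Markov kernel; these are equivalent mechanisms here.
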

\begin{proof}
Fix $x\in\mathbb{R}^N,\,0\le s < u< t\le T$ and $\ell^0,\,\ell^1\in\mathbb{S}$; by \eqref{der_deter} we have 
\begin{multline}\label{det_1}
	k^{\ell^0}_{u,t}\left(Z^{\ell^1}_u\left(s,x\right)\right)
	=\restr{k^{\ell^0}_{u,t}\left(y\right)}{y=Z^{\ell^1}_u\left(s,x\right)}
	\\
	=\restr
	{\mathbb{E}\left[u_0\left(Z^{\ell^0}_t\left(u,y\right)\right)\left\langle \left(I^{\ell^0}_{u,t}\right)^{-1}e^{\left(t-u\right)A}B\left(u,y\right),Z^{\ell^0}_t\left(u,y\right)-e^{\left(t-u\right)A}y-F^{}_{u,t}\right\rangle\right]}{y=Z^{\ell^1}_u\left(s,x\right)}.
\end{multline}
Note that  $Z^{\ell^0}_t\left(u,y\right)\sim\mu^{\ell^0}_{u,t}\left(y,\cdot\right),\,y\in\mathbb{R}^N$; furthermore, direct computations show that, for every $y,\,\xi\in\mathbb{R}^N$, 
\begin{equation*}
	\phi^{\ell^1}_{u,t}\left(y,\xi\right)
	=\det\left(\left(I_{u,t}^{\ell^0}\right)^{\frac{1}{2}}\left(I_{u,t}^{\ell^1}\right)^{-\frac{1}{2}}\right)\phi_{u,t}^{\ell^0}\left(y,\left(I_{u,t}^{\ell^0}\right)^{\frac{1}{2}}\left(I_{u,t}^{\ell^1}\right)^{-\frac{1}{2}}\left(\xi-e^{\left(t-u\right)A}y-F^{}_{u,t}\right)+e^{\left(t-u\right)A}y+F^{}_{u,t}\right).
\end{equation*}
Going back to \eqref{det_1} we write, with the substitution $\xi=\left(I_{u,t}^{\ell^0}\right)^{\frac{1}{2}}\left(I_{u,t}^{\ell^1}\right)^{-\frac{1}{2}}\left(\xi'-e^{\left(t-u\right)A}y-F^{}_{u,t}\right)+e^{\left(t-u\right)A}y+F^{}_{u,t}$ suggested by the previous calculations,
\begin{align*}
	&\restr{k^{\ell^0}_{u,t}\left(y\right)}{y=Z^{\ell^1}_u\left(s,x\right)}
	=\restr{\int_{\mathbb{R}^N}u_0\left(\xi\right)\left\langle \left(I^{\ell^0}_{u,t}\right)^{-1}e^{\left(t-u\right)A}B\left(u,y\right),\xi-e^{\left(t-u\right)A}y-F^{}_{u,t}\right\rangle\phi_{u,t}^{\ell^0}\left(y,\xi\right)d\xi\,}{y=Z^{\ell^1}_u\left(s,x\right)}
	\\&\qquad\quad\,=
	\int_{\mathbb{R}^N}u_0\left(\left(I_{u,t}^{\ell^0}\right)^{\frac{1}{2}}\left(I_{u,t}^{\ell^1}\right)^{-\frac{1}{2}}\left(\xi'-e^{\left(t-u\right)A}y-F^{}_{u,t}\right)+e^{\left(t-u\right)A}y+F^{}_{u,t}\right)\\
	&\qquad\qquad\qquad\times \restr{\left\langle \left(I^{\ell^0}_{u,t}\right)^{-\frac{1}{2}}e^{\left(t-u\right)A}B\left(u,y\right),\left(I^{\ell^1}_{u,t}\right)^{-\frac{1}{2}}\left(\xi'-e^{\left(t-u\right)A}y-F^{}_{u,t}\right)\right\rangle\phi_{u,t}^{\ell^1}\left(y,\xi'\right)d\xi'\,}{y=Z^{\ell^1}_u\left(s,x\right)}.
\end{align*}
At this point we invoke the disintegration formula of the conditional expectation (see, e.g., \cite[Theorem $5.4$]{ola}) and \eqref{trans} to deduce \eqref{cond_k01}, completing the proof.
\end{proof}
\begin{rem}\label{cambia}
	The function $k_{u,t}^{\ell^0},\,\ell^0\in\mathbb{S},\,0\le u<t\le T,$ does not depend on the probability space where the underlying OU processes $Z^{\ell^0}_t\left(u,x\right),\,{x\in\mathbb{R}^N},$ are defined.
\end{rem}
	\subsection{Random time--shift}\label{sub1.2}
	Here we investigate the subordinated Brownian motion case (see Lemma \ref{l13}) after some further preparation. In what follows, we denote by $\Omega_k,\,k\in\mathbb{N}\cup \left\{0\right\},$ copies of the probability space $\Omega$. Let $\mathbb{W}$ be the space of continuous functions from $\mathbb{R}_+$ to $\mathbb{R}^N$ vanishing at $0$ and endow it with the Borel $\sigma$--algebra $\mathcal{B}\left(\mathbb{W}\right)$ associated with the topology of locally uniform convergence. The pushforward probability measure generated by  $W\left(\cdot\right)\colon\left(\Omega,\mathcal{F},\mathbb{P}\right)\to\left(\mathbb{W}, \mathcal{B}\left(\mathbb{W}\right) \right)$ is denoted by $\mathbb{P}^\mathbb{W}$ and makes the canonical process $\mathfrak{x}=\left(x_t\right)_{t\ge 0}$ a Brownian motion. 
	We work with the usual completion $\left(\mathbb{W}, \overline{\mathcal{B}\left(\mathbb{W}\right)}, \overline{\mathbb{P}^\mathbb{W}} \right)$ of this probability space:  $\mathfrak{x}$ is still a Brownian motion with respect to its minimal augmented filtration (cfr. \cite[Theorem $7.9$]{KS}). The completeness of the space $\left(\Omega,\mathcal{F},\mathbb{P}\right)$ implies the measurability of $W\left(\cdot\right)\colon\left(\Omega,\mathcal{F},\mathbb{P}\right)\to\left(\mathbb{W}, \overline{\mathcal{B}\left(\mathbb{W}\right)} \right)$ and the fact that $\overline{\mathbb{P}^\mathbb{W}}$ is still the pushforward probability measure generated by $W\left(\cdot\right)$. Since $W\left(\cdot\right)$ is independent from $\mathcal{F}^L$, a regular conditional distribution of $W\left(\cdot\right)$ given $\mathcal{F}^L$ is $\overline{\mathbb{P}^\mathbb{W}}\left(A\right),\,A\in\overline{\mathcal{B}\left(\mathbb{W}\right)}$. 	Moreover, we denote by (coherently with Subsection~\ref{sub1.1})
	\[
	\widebar{Z}^{\ell}_t\left(u,y\right)=e^{\left(t-u\right)A}y+F^{}_{u,t}+\int_{u}^{t}e^{\left(t-r\right)A}\sqrt{Q}\,dx_{\ell_r}\colon \mathbb{W}\to\mathbb{R}^N,\quad  0\le u\le t\le T,\,y\in\mathbb{R}^N,\,\ell\in\mathbb{S},
	\]
	and by $\mathbb{E}_k\left[\cdot\right]$ [resp., $\mathbb{E}_{\mathbb{W}}\left[\cdot\right]$] the expectation of a random variable defined on $\Omega_k$ [resp., $\mathbb{W}$].
	We are now in position to prove the next claim, which is the analogue of \cite[Corollary $2.2$]{LRF2}. 
	\begin{lemma}\label{l13}
		For every $x\in\mathbb{R}^N$ and $0\le s<t\le T$ one has
		\begin{align}
			\begin{split}\label{k^0}
				v^1_s\left(t,x\right)=\int_{s}^{t}du\left(\mathbb{E}_0\otimes\mathbb{E}_1\right)\Bigg[
				\\
				u_0\left(\left(I_{u,t}^{L}\left(\omega_0\right)\right)^{\frac{1}{2}}\left(\left(I_{u,t}^{L}
				\right)^{-\frac{1}{2}}\left(Z^{s,x}_{t}-e^{\left(t-u\right)A}Z^{s,x}_{u}-F_{u,t}\right)\right)\left(\omega_1\right)+F_{u,t}+e^{\left(t-u\right)A}Z^{s,x}_{u}\left(\omega_1\right)\right)\\
				\times\left\langle \left(I^{L}_{u,t}\left(\omega_0\right)\right)^{-\frac{1}{2}}e^{\left(t-u\right)A}B\left(u,Z_{u}^{s,x}\left(\omega_1\right)\right),\left(I^{L}_{u,t}\right)^{-\frac{1}{2}}
				\left(Z^{s,x}_{t}-e^{\left(t-u\right)A}Z^{s,x}_{u}-F_{u,t}\right)\left(\omega_1\right)\right\rangle\Bigg].
			\end{split}
		\end{align}
	\end{lemma}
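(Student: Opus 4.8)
The plan is to reduce the subordinated Brownian motion setting to the deterministic time--shift framework of Subsection~\ref{sub1.1} by conditioning on $\mathcal{F}^L$, and then to invoke Lemma~\ref{det_time_1}. Fix $0\le s<t\le T$, $x\in\mathbb{R}^N$ and $u\in\left(s,t\right)$; by \eqref{1goal} it suffices to find an explicit expression for $\mathbb{E}\left[k^0_{u,t}\left(Z^{s,x}_u\right)\right]$ and to integrate in $u$ over $\left(s,t\right)$ at the end, the use of Tonelli's theorem being legitimate since \eqref{est12} makes $\left(u,\omega\right)\mapsto k^0_{u,t}\left(Z^{s,x}_u\left(\omega\right)\right)$ absolutely integrable on $\left(s,t\right)\times\Omega$.

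The first step is the identity $k^0_{u,t}\left(y\right)=\mathbb{E}\left[k^{L}_{u,t}\left(y\right)\right]$ for every $y\in\mathbb{R}^N$, where $k^\ell_{u,t}$, $\ell\in\mathbb{S}$, is the deterministic--time--shift function of \eqref{k0,1} (by Remark~\ref{cambia} we may realize the underlying OU processes on $\mathbb{W}$, so that $k^\ell_{u,t}\left(y\right)=\left\langle B\left(u,y\right),\nabla^\top\mathbb{E}_{\mathbb{W}}\left[u_0\left(\widebar{Z}^\ell_t\left(u,y\right)\right)\right]\right\rangle$). To see this, recall the Bismut--type formula \eqref{no_bel}: for $h\in\mathbb{R}^N$ one has $\left\langle\nabla^\top R_{u,t}u_0\left(y\right),h\right\rangle=\mathbb{E}\left[u_0\left(Z^{u,y}_t\right)\left\langle\left(I^L_{u,t}\right)^{-1}e^{\left(t-u\right)A}h,Z^{u,y}_t-e^{\left(t-u\right)A}y-F_{u,t}\right\rangle\right]$. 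Since $W\left(\cdot\right)$ is independent of $\mathcal{F}^L$ with regular conditional distribution $\overline{\mathbb{P}^{\mathbb{W}}}$, conditioning on $\mathcal{F}^L$ replaces $\left(Z^{u,y}_t,I^L_{u,t}\right)$ by $\left(\widebar{Z}^{L}_t\left(u,y\right),I^{L}_{u,t}\right)$ (the matrix being $\mathcal{F}^L$--measurable), and a comparison with \eqref{der_deter} identifies the conditional expectation of the integrand above with $\left\langle\nabla^\top\mathbb{E}_{\mathbb{W}}\left[u_0\left(\widebar{Z}^{L}_t\left(u,y\right)\right)\right],h\right\rangle$. Taking the outer expectation and letting $h$ range over $\mathbb{R}^N$ gives $\nabla^\top R_{u,t}u_0\left(y\right)=\mathbb{E}\left[\nabla^\top\mathbb{E}_{\mathbb{W}}\left[u_0\left(\widebar{Z}^{L}_t\left(u,y\right)\right)\right]\right]$, and pairing with the deterministic vector $B\left(u,y\right)$ yields the claimed identity.

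Next I would plug this in and separate the sources of randomness. Realizing the subordinator inside $k^0_{u,t}$ on an independent copy $\Omega_0$ and the process $Z^{s,x}$ on a copy $\Omega_1$, Fubini gives $\mathbb{E}\left[k^0_{u,t}\left(Z^{s,x}_u\right)\right]=\left(\mathbb{E}_0\otimes\mathbb{E}_1\right)\left[k^{L}_{u,t}\left(Z^{s,x}_u\right)\right]$, the integrability being controlled, uniformly in $u$ away from $t$, by the moment bound \eqref{sub} and the gradient estimate \eqref{est}, which make $\mathbb{E}_0\left[\norm{k^L_{u,t}}_\infty\right]$ finite. Conditioning the $\Omega_1$--expectation on $\mathcal{F}^L$ and again invoking the regular conditional distribution $\overline{\mathbb{P}^{\mathbb{W}}}$, the joint conditional law of $\left(Z^{s,x}_u,Z^{s,x}_t\right)$ given $\mathcal{F}^L$ is that of $\left(\widebar{Z}^{L}_u\left(s,x\right),\widebar{Z}^{L}_t\left(s,x\right)\right)$; hence for fixed realizations $\ell^0=L\left(\omega_0\right)$, $\ell^1=L\left(\omega_1\right)$ one is reduced to evaluating $\mathbb{E}_{\mathbb{W}}\left[k^{\ell^0}_{u,t}\left(\widebar{Z}^{\ell^1}_u\left(s,x\right)\right)\right]$. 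Here Lemma~\ref{det_time_1}, applicable on $\mathbb{W}$ thanks to Remark~\ref{cambia}, expresses $k^{\ell^0}_{u,t}\left(\widebar{Z}^{\ell^1}_u\left(s,x\right)\right)$ through \eqref{cond_k01} as the conditional expectation, given $\sigma\left(\widebar{Z}^{\ell^1}_u\left(s,x\right)\right)$, of precisely the integrand occurring in \eqref{k^0} (with $I^L_{u,t}\left(\omega_0\right)$, $I^L_{u,t}\left(\omega_1\right)$, $Z^{s,x}_u\left(\omega_1\right)$, $Z^{s,x}_t\left(\omega_1\right)$ in place of the deterministic objects); applying $\mathbb{E}_{\mathbb{W}}$ and using the tower property removes the inner conditioning. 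Re--assembling — the $\Omega_0$--expectation over $\ell^0$, and $\mathbb{E}_1\left[\,\cdot\mid\mathcal{F}^L\right]$ followed by $\mathbb{E}_1$ recombining into the plain $\Omega_1$--expectation — and finally integrating in $u$ over $\left(s,t\right)$ produces \eqref{k^0}.

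The main obstacle, beyond the bookkeeping of which process lives on which copy of $\Omega$, is the identity $k^0_{u,t}=\mathbb{E}\left[k^L_{u,t}\right]$: one must argue carefully that conditioning the Bismut formula \eqref{no_bel} on $\mathcal{F}^L$ reproduces exactly the deterministic--time--shift Bismut formula \eqref{der_deter}, which relies on the measurability and regular--conditional--distribution properties of $W\left(\cdot\right)$ given $\mathcal{F}^L$ recorded in Subsection~\ref{sub1.2}, together with the integrability of the weight $\left(I^L_{u,t}\right)^{-1}$ — finite here because $\alpha\in\left(\frac12,1\right)$, cf.\ \eqref{sub} and \eqref{est}. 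Once this is in place, the remaining manipulations (Fubini, the conditioning on $\mathcal{F}^L$, the tower property) are routine given the uniform bounds on $u_0$ and $B$ and the $L^1$--control in $\left(u,\omega\right)$ coming from \eqref{est12}.
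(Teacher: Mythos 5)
Your proposal is correct and follows essentially the same route as the paper: you establish the identity $k^0_{u,t}\left(y\right)=\mathbb{E}_0\big[k^{\ell^0}_{u,t}\left(y\right)\big|_{\ell^0=L\left(\omega_0\right)}\big]$ by conditioning the derivative formula \eqref{no_bel} on $\mathcal{F}^L$ and matching it with \eqref{der_deter}, then condition $R_{s,u}k^0_{u,t}\left(x\right)$ on $\mathcal{F}^L$, combine via Fubini, invoke Lemma~\ref{det_time_1} together with the tower property, and integrate in $u$ — which is exactly the chain \eqref{m}–\eqref{1}–\eqref{2} in the paper's proof. The integrability justifications you supply (via \eqref{sub}, \eqref{est}, \eqref{est12}) are consistent with what the paper uses.
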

\begin{proof}
	Fix $0\le s <t\le T$; combining the definition in \eqref{k^0_def} and the expression in \eqref{no_bel} we get, by the law of total expectation, for every $u\in\left(s,t\right),$
	\begin{equation}\label{m}
		k^0_{u,t}\left(y\right)
		=\mathbb{E}_0\left[\mathbb{E}_0\left[u_0\left(Z_{t}^{u,y}\right)\left\langle \left(I_{u,t}^L\right)^{-1}e^{\left(t-u\right)A}B\left(u,y\right),Z^{u,y}_{t}-e^{\left(t-u\right)A}y-F_{u,t}\right\rangle\Big|\mathcal{F}^L\right]\right],\quad y\in\mathbb{R}^N.
	\end{equation}
The discussion preceding this lemma together with the usual rules of change of probability space (see, e.g., \cite[§X-$2$]{JA}) and the substitution formula in  \cite[Lemma $5$]{AB} lets us apply the disintegration formula for the conditional expectation to get, from \eqref{det_1}-\eqref{m} and Remark \ref{cambia},
\begin{multline}\label{1}
	k^0_{u,t}\left(y\right)=\mathbb{E}_0\left[\restr{\mathbb{E}_{\mathbb{W}}\left[u_0\left(\widebar{Z}^{\ell^0}_{t}\left(u,y\right)\right)\left\langle \left(I_{u,t}^{\ell^0}\right)^{-1}e^{\left(t-u\right)A}B\left(u,y\right),\widebar{Z}^{\ell^0}_t\left(u,y\right)-e^{\left(t-u\right)A}y-F^{}_{u,t}\right\rangle\right]}{\ell^0=L\left(\omega_0\right)}\right]\\
	=\mathbb{E}_0\left[\restr{k^{\ell^0}_{u,t}\left(y\right)}{\ell^0=L\left(\omega_0\right)}\right],\quad y\in\mathbb{R}^N.
\end{multline}
Since we aim to compute \eqref{1goal}, for a generic $x\in\mathbb{R}^N$ we focus on
\begin{equation}\label{2}
	R_{s,u}k^0_{u,t}\left(x\right)
	=
	\mathbb{E}_1\left[\mathbb{E}_1\left[k^0_{u,t}\left(Z^{s,x}_{u}\right)\Big|\mathcal{F}^L\right]\right]
	=\mathbb{E}_1\left[\restr{\mathbb{E}_{\mathbb{W}}\left[k^0_{u,t}\left(\widebar{Z}^{\ell^1}_{u}\left(s,x\right)\right)\right]}{\ell^1=L\left(\omega_1\right)}\right],
\end{equation}
with the last equality which is obtained by the same argument as in \eqref{1}. At this point we combine \eqref{1} and \eqref{2} to write, using Fubini's theorem,
\begin{align*}
	R_{s,u}k^0_{u,t}\left(x\right)
	=
	\mathbb{E}_0\left[\restr{\mathbb{E}_1\left[
		\restr{\mathbb{E}_{\mathbb{W}}\left[k^{\ell^0}_{u,t}\left(\widebar{Z}^{\ell^1}_{u}\left(s,x\right)\right)\right]}{\ell^1=L\left(\omega_1\right)}\right]}{\ell^0=L\left(\omega_0\right)}\right].
\end{align*} 
Recalling that \eqref{cond_k01} in Lemma \ref{det_time_1} provides us with an expression for $k^{\ell^0}_{u,t}\left(\widebar{Z}^{\ell^1}_u\left(s,x\right)\right)$, we can use the law of total expectation and reason backwards with the conditioning in $\mathcal{F}^L$ to conclude that 
\begin{align*}
	R_{s,u}k^0_{u,t}\left(x\right)
	\\=\mathbb{E}_0\Bigg[\mathbb{E}_1
	\bigg[u_0\left(\left(I_{u,t}^{\ell^0}\right)^{\frac{1}{2}}\left(\left(I_{u,t}^{L}\right)^{-\frac{1}{2}}\left(Z^{s,x}_{t}-e^{\left(t-u\right)A}Z^{s,x}_{u}-F_{u,t}\right)\right)\left(\omega_1\right)+F^{}_{u,t}+e^{\left(t-u\right)A}Z^{s,x}_{u}\left(\omega_1\right)\right)\\
	\times\left\langle \left(I^{\ell^0}_{u,t}\right)^{-\frac{1}{2}}e^{\left(t-u\right)A}B\left(u,Z_{u}^{s,x}\left(\omega_1\right)\right),\left(I^{L}_{u,t}\right)^{-\frac{1}{2}}
	\left(Z^{s,x}_{t}-e^{\left(t-u\right)A}Z^{s,x}_{u}-F_{u,t}\right)\left(\omega_1\right)\right\rangle\bigg]\bigg|_{\ell^0=L\left(\omega_0\right)}\Bigg].
\end{align*}
Integrating the previous expression in the interval $\left(s,t\right)$ with respect to $u$ we obtain \eqref{k^0} completing the proof.
\end{proof}
	\section{The general term of the iteration scheme}\label{sec_3}
	Let $\alpha\in\left(\frac{1}{2},1\right).$ We want to analyze the general term $v^{n+1}_s\left(t,x\right)=\int_{s}^{t}R_{s,u}k^{n}_{u,t}\left(x\right)du,\,0\le s <t\le T,$ of the iteration \eqref{it_scheme} for an integer $n\ge1$. Therefore we search for an explicit expression of
	\begin{equation}\label{n_goal}
		R_{s,u}k^n_{u,t}\left(x\right)=\mathbb{E}\left[k^n_{u,t}\left(Z_{u}^{s,x}\right)\right],\quad x\in \mathbb{R}^N,\,u\in\left(s,t\right).
	\end{equation}
	\subsection{Deterministic time--shift}
	We continue the construction carried out in Subsection \ref{sub1.1}. Specifically,  fix an integer $n\ge 1$ and $t\in\left(0,T\right]$; for every $i=1,\dots,n$, $\left(i+1\right)-$tuple $\left(s_{n-i+1},s_{n-i+2},\dots ,s_{n+1}\right)$ such that $0\le s_{n-i+1}<s_{n-i+2}<\dots<s_{n+1}<t$ and  $\ell^0,\dots,\ell^{i}\in\mathbb{S}$  we define (see \eqref{k0,1})
	\begin{equation}\label{it_det_def}
		k^{\ell^0,\dots,\ell^i}_{s_{n-i+1},\dots,s_{n+1},t}\left(y\right)=
		\left\langle
		B\left(s_{n-i+1},y\right),\nabla^\top\mathbb{E}\left[k^{\ell^0,\dots,\ell^{i-1}}_{s_{n-i+2},\dots,s_{n+1},t}\left(Z_{s_{n-i+2}}^{\ell^{i}}\left(s_{n-i+1},y\right)\right)\right]
		\right\rangle,\quad y\in\mathbb{R}^N.
	\end{equation}
	Note that, by the continuity and boundedness of $B$, an induction argument shows that all these functions are well defined and in $C_b\left(\mathbb{R}^N\right).$ Moreover, as in Remark \ref{cambia} we observe that their value does not depend on the probability space where the underlying OU processes are constructed. By  \eqref{der_deter} we have ($y\in\mathbb{R}^N$)
	\begin{multline}\label{it_det}
		k^{\ell^0,\dots,\ell^i}_{s_{n-i+1},\dots,s_{n+1},t}\left(y\right)=\mathbb{E}\bigg[k^{\ell^0,\dots,\ell^{i-1}}_{s_{n-i+2},\dots, s_{n+1},t}\left(Z_{s_{n-i+2}}^{\ell^{i}}\left(s_{n-i+1},y\right)\right)\\
		\times \left\langle  \left(I_{s_{n-i+1},s_{n-i+2}}^{\ell^{i}}\right)^{-1}e^{\left(s_{n-i+2}-s_{n-i+1}\right)A}B\left(s_{n-i+1},y\right),Z^{\ell^{i}}_{s_{n-i+2}}\left(s_{n-i+1},0\right)-F^{}_{s_{n-i+1},s_{n-i+2}}\right\rangle\bigg].
	\end{multline}
	To shorten the notation, in what follows we set $n_i= n-i$.
	Once again, motivated by \eqref{n_goal} we want to find  an explicit formula for the term $k^{\ell^0,\dots,\ell^n}_{s_1,\dots,s_{n+1},t}\left(Z^{\ell^{n+1}}_{s_1}\left(s,x\right)\right)$, where $\ell^{n+1}\in\mathbb{S},\,0\le s<s_1<\dots<s_{n+1}<t$ and $x\in\mathbb{R}^N$.  A candidate for such an expression is given by \eqref{cond_k01} in Lemma \ref{det_time_1}, from which we deduce the next claim.
	\begin{lemma}\label{L8}
		Consider $0\le s <t\le T$ and an integer $n\ge 1$. Then, for every $x\in\mathbb{R}^N$, $i=0,\dots,n$,  $\left(i+2\right)-$tuple $\left(s_{n_i},s_{n_i+1},\dots,s_{n+1}\right)$ such that $s\le s_{n_i}< s_{n_{i}+1}<\dots<s_{n+1}<t$ and $\ell^0,\dots,\ell^{i+1}\in\mathbb{S}$, one has
	\begin{align}\label{conde_kn}
		 \notag k^{\ell^0,\dots,\ell^i}_{s_{n_i+1},\dots,s_{n+1},t}\left(Z_{s_{n_i+1}}^{\ell^{i+1}}\left(s_{n_i},x\right)\right)
		 =
		 \mathbb{E}\Bigg[
		 u_0\bigg(\sum_{j=1}^{i+1}e^{\left(t-s_{n_j+3}\right)A}\bigg[F^{}_{s_{n_j+2},s_{n_j+3}}+\left(I^{\ell^{j-1}}_{s_{n_j+2},s_{n_j+3}}\right)^{\frac{1}{2}}\left(I^{\ell^{i+1}}_{s_{n_j+2},s_{n_j+3}}\right)^{-\frac{1}{2}}\\
		 \notag\left(Z_{s_{n_j+3}}^{\ell^{i+1}}\left(s_{n_i},x\right)-e^{\left(s_{n_j+3}-s_{n_j+2}\right)A}Z_{s_{n_j+2}}^{\ell^{i+1}}\left(s_{n_i},x\right)-F^{}_{s_{n_j+2},s_{n_j+3}}\right)\bigg]
		 +e^{\left(t-s_{n_i+1}\right)A}Z_{s_{n_i+1}}^{\ell^{i+1}}\left(s_{n_i},x\right)\bigg)
		 	\\\notag\times\prod_{j=1}^{i+1}	
		 	\Big\langle \left(I^{\ell^{j-1}}_{s_{n_j+2},s_{n_j+3}}\right)^{-\frac{1}{2}}e^{\left(s_{n_j+3}-s_{n_j+2}\right)A}B\bigg(s_{n_j+2},
		 	\sum_{k=j+1}^{i+1}			{\color{black}{e^{\left(s_{n_j+2}-s_{n_k+3}\right)A}}}\Big[\left(I^{\ell^{k-1}}_{s_{n_k+2},s_{n_k+3}}\right)^{\frac{1}{2}}\left(I^{\ell^{i+1}}_{s_{n_k+2},s_{n_k+3}}\right)^{-\frac{1}{2}}\\\notag
		 	\left(Z_{s_{n_k+3}}^{\ell^{i+1}}\left(s_{n_i},x\right)-e^{\left(s_{n_k+3}-s_{n_k+2}\right)A}Z_{s_{n_k+2}}^{\ell^{i+1}}\left(s_{n_i},x\right)-F^{}_{s_{n_k+2},s_{n_k+3}}\right)+F^{}_{s_{n_k+2},s_{n_k+3}}\Big]
		 	+e^{\left(s_{n_j+2}-s_{n_i+1}\right)A}\\\notag Z_{s_{n_i+1}}^{\ell^{i+1}}\left(s_{n_i},x\right)\bigg),
		 	\left(I^{\ell^{i+1}}_{s_{n_j+2},s_{n_j+3}}\right)^{-\frac{1}{2}}
		 	\left(Z_{s_{n_j+3}}^{\ell^{i+1}}\left(s_{n_i},x\right)-e^{{\left(s_{n_j+3}-s_{n_j+2}\right)}A}Z_{s_{n_j+2}}^{\ell^{i+1}}\left(s_{n_i},x\right)-F^{}_{s_{n_j+2},s_{n_j+3}}\right)\Big\rangle 
		 	\\\bigg|\,\sigma\left(Z^{\ell^{i+1}}_{s_{n_i+1}}\left(s_{n_i},x\right)\right)\Bigg],\quad \mathbb{P}-\text{a.s.},
		\end{align}
	where $s_{n+2}=t$.
	\end{lemma}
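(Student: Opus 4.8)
The plan is to argue by induction on $i\in\{0,1,\dots,n\}$, peeling off one layer of the recursive definition \eqref{it_det_def} at each step. The base case $i=0$ \emph{is} Lemma \ref{det_time_1}: the single-index object $k^{\ell^0}_{s_{n+1},t}$ and the shifts $\ell^0,\ell^1$ play the roles of $k^{\ell^0}_{u,t}$, $\ell^0$, $\ell^1$ with $u=s_{n+1}$, $s=s_n$, and using $s_{n_1+2}=s_{n+1}$, $s_{n_1+3}=s_{n+2}=t$, $s_{n_0+1}=s_{n+1}$ together with the vanishing of the empty inner sum $\sum_{k=2}^{1}$, one checks that the argument of $u_0$ and the single factor $\prod_{j=1}^{1}\langle\cdot\rangle$ in \eqref{conde_kn} coincide term by term with those in \eqref{cond_k01}.

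For the inductive step $i-1\Rightarrow i$, the first observation is that, by \eqref{it_det_def}, the function $y\mapsto k^{\ell^0,\dots,\ell^i}_{s_{n_i+1},\dots,s_{n+1},t}(y)$ is exactly of the form \eqref{k0,1} with the bounded continuous function $\widetilde k:=k^{\ell^0,\dots,\ell^{i-1}}_{s_{n_i+2},\dots,s_{n+1},t}$ in place of $u_0$, with $\ell^i$ in place of $\ell^0$, and with the interval $[s_{n_i+1},s_{n_i+2}]$ in place of $[u,t]$. Applying Lemma \ref{det_time_1} to this data (with $\ell^{i+1}$ in place of $\ell^1$ and $s_{n_i}$ in place of $s$) yields
\[
k^{\ell^0,\dots,\ell^i}_{s_{n_i+1},\dots,s_{n+1},t}\big(Z^{\ell^{i+1}}_{s_{n_i+1}}(s_{n_i},x)\big)=\mathbb{E}\Big[\widetilde k(\Xi)\,\langle\cdot\rangle_{i+1}\,\Big|\,\sigma\big(Z^{\ell^{i+1}}_{s_{n_i+1}}(s_{n_i},x)\big)\Big],
\]
where $\Xi$ is the Gaussian expression appearing as the argument of $u_0$ in \eqref{cond_k01} for this data, and $\langle\cdot\rangle_{i+1}$ is, after the identifications $n_{i+1}+2=n_i+1$ and $n_{i+1}+3=n_i+2$, precisely the $j=i+1$ factor of the product in \eqref{conde_kn}.

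It remains to expand $\widetilde k(\Xi)$ via the inductive hypothesis. The key point is that \eqref{conde_kn} at level $i-1$ exhibits $\widetilde k\big(Z^{\ell^i}_{s_{n_i+2}}(s_{n_i+1},x_0)\big)$ as a conditional expectation given $\sigma\big(Z^{\ell^i}_{s_{n_i+2}}(s_{n_i+1},x_0)\big)$; by the Markov property of the Gaussian OU family this quantity depends on $x_0$ only through the endpoint $Z^{\ell^i}_{s_{n_i+2}}(s_{n_i+1},x_0)$, hence it is a fixed Borel function of that endpoint, and since the latter has a non-degenerate Gaussian law and $\widetilde k$ is continuous, the function agrees with $\widetilde k$ everywhere. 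Evaluating it at $z=\Xi$ on a fresh independent copy carrying an $\ell^i$-OU process (legitimate by Remark \ref{cambia}) with increments on the nodes $s_{n_i+3}<\dots<s_{n+1}<t$, I would then run the Gaussian change of variables of the proof of Lemma \ref{det_time_1} — licit because $A$, $Q$ and hence all the matrices $I^{\ell}_{\cdot,\cdot}$ are diagonal, so they commute — to replace each $\ell^i$-increment by the corresponding increment of $Z^{\ell^{i+1}}_{\bullet}(s_{n_i},x)$ and to turn every factor $(I^{\ell^{j-1}})^{1/2}(I^{\ell^i})^{-1/2}$ into $(I^{\ell^{j-1}})^{1/2}(I^{\ell^{i+1}})^{-1/2}$. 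Using the flow identity $Z^{\ell^{i+1}}_r(s_{n_i},x)=Z^{\ell^{i+1}}_r\big(s_{n_i+1},Z^{\ell^{i+1}}_{s_{n_i+1}}(s_{n_i},x)\big)$ and the relation $(I^{\ell^i}_{s_{n_i+1},s_{n_i+2}})^{1/2}(I^{\ell^{i+1}}_{s_{n_i+1},s_{n_i+2}})^{-1/2}\big(Z^{\ell^{i+1}}_{s_{n_i+2}}(s_{n_i},x)-e^{(s_{n_i+2}-s_{n_i+1})A}Z^{\ell^{i+1}}_{s_{n_i+1}}(s_{n_i},x)-F_{s_{n_i+1},s_{n_i+2}}\big)=\Xi-F_{s_{n_i+1},s_{n_i+2}}-e^{(s_{n_i+2}-s_{n_i+1})A}Z^{\ell^{i+1}}_{s_{n_i+1}}(s_{n_i},x)$, the ``$e^{(\cdot-s_{n_i+2})A}z$''-type terms of the level-$(i-1)$ formula (in the $u_0$-argument and inside each $B(s_{n_j+2},\cdot)$) split, upon $z=\Xi$, into the $j=i+1$ summand of \eqref{conde_kn} plus the trailing terms $e^{(\cdot-s_{n_i+1})A}Z^{\ell^{i+1}}_{s_{n_i+1}}(s_{n_i},x)$. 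Collapsing the resulting nested expectations into a single conditional expectation given $\sigma\big(Z^{\ell^{i+1}}_{s_{n_i+1}}(s_{n_i},x)\big)$ by the law of total expectation and the disintegration formula — just as at the end of the proofs of Lemmas \ref{det_time_1} and \ref{l13} — and matching index by index with \eqref{conde_kn} at level $i$ closes the induction.

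The main obstacle is the combinatorial bookkeeping: one must check that adjoining the new layer at time $s_{n_i+1}$ (which becomes the $j=i+1$ factor) together with the substitution $\ell^i\rightsquigarrow\ell^{i+1}$ reproduces, term for term, the telescoping of the nested sums $\sum_{k=j+1}^{i+1}$ and of the product $\prod_{j=1}^{i+1}$ in \eqref{conde_kn}; and, more delicately, one must make rigorous the passage ``the level-$(i-1)$ conditional expectation is a deterministic function of the endpoint and may therefore be evaluated at the random point $\Xi$'', which requires the same care with auxiliary probability spaces (a fresh copy carrying the $\ell^i$-OU process, conditionally independent of the $\ell^{i+1}$-data given the endpoint) as in Subsection \ref{sub1.2}.
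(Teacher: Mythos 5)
Your proposal is correct in substance and relies on the same toolkit as the paper --- induction on $i$ with Lemma \ref{det_time_1} as base case, the Gaussian change of variables trading $\ell^i$-transition densities for $\ell^{i+1}$-ones, and the disintegration formula --- but you organize the inductive step in the opposite order. You peel the recursion from the outside in: you first apply Lemma \ref{det_time_1} with $\widetilde k = k^{\ell^0,\dots,\ell^{i-1}}_{s_{n_i+2},\dots,s_{n+1},t}$ in place of $u_0$ to produce the $j=i+1$ factor, and only then expand $\widetilde k$ at the random point $\Xi$. The paper works inside out: for a \emph{deterministic} starting point $y$ it substitutes the inductive hypothesis into \eqref{it_det} and applies the law of total expectation to obtain the unconditional representation \eqref{omg} of $k^{\ell^0,\dots,\ell^m}_{s_{n_m+1},\dots,s_{n+1},t}\left(y\right)$ as an expectation of a functional of $\left(Z^{\ell^m}_{s_{n_m+2}}\left(s_{n_m+1},y\right),\dots,Z^{\ell^m}_t\left(s_{n_m+1},y\right)\right)$, rewrites it as the iterated integral \eqref{dn_1}, performs one global change of variables to reach \eqref{si}, and composes with the random initial datum only at the very last step via disintegration. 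The payoff of that ordering is precisely that it bypasses the two steps you flag as delicate: one never has to evaluate a function defined by a conditional expectation at the random intermediate point $\Xi$, nor to recenter the $\ell^{i+1}$-kernels started at $\Xi$ onto the actual chain issuing from $Z^{\ell^{i+1}}_{s_{n_i+2}}\left(s_{n_i},x\right)$. Your route can be completed, but note one soft spot: the Borel function given by the level-$\left(i-1\right)$ conditional expectation agrees with $\widetilde k$ only Lebesgue-a.e.\ (a conditional expectation is defined up to null sets and is not a priori continuous, so ``agrees everywhere by continuity'' is not immediate); what saves you is either that $\Xi$ has a density, so the two agree at $\Xi$ almost surely, or --- cleaner --- that the iterated-integral representation of $\widetilde k\left(y\right)$ holds pointwise for every $y$, which is exactly what the paper's passage \eqref{omg}--\eqref{dn_1} establishes.
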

	In the previous expression, we interpret the empty sum to be $0$: we adopt this convention hereafter.
	\begin{proof}
		Fix $0\le s < t\le T$ and  an integer $n\ge 1$. We proceed by induction on $i$, observing that the base case $i=0$ has been proven in \eqref{cond_k01}, where $s_n=s$ and $s_{n+1}=u$.
		
		For the induction step, suppose that the statement is valid for $i=m-1$, for some  $m=1,\dots,n$: our goal is to show that it holds true for $i=m$, as well. Take an $\left(m+2\right)-$tuple $\left(s_{n_m},s_{n_m+1},\dots,s_{n+1}\right)$ such that $s\le s_{n_m}<s_{n_m+1}<\dots<s_{n+1}<t$ and $\ell^0,\dots,\ell^{m+1}\in\mathbb{S}$; recalling \eqref{it_det} and denoting  $s_{n+2}=t$, we apply the inductive hypothesis and the law of total expectation to write, for every $y\in\mathbb{R}^N$,
		\begin{align}\label{omg}
			\notag k^{\ell^0,\dots,\ell^{m}}_{s_{n_m+1},\dots,s_{n+1},t}\left(y\right)
			=	
			\mathbb{E}\Bigg[
			u_0\bigg(\sum_{j=1}^{m}e^{\left(t-s_{n_j+3}\right)A}\bigg[F^{}_{s_{n_j+2},s_{n_j+3}}+\left(I^{\ell^{j-1}}_{s_{n_j+2},s_{n_j+3}}\right)^{\frac{1}{2}}\left(I^{\ell^{m}}_{s_{n_j+2},s_{n_j+3}}\right)^{-\frac{1}{2}}\\
			\notag
			\left(Z_{s_{n_j+3}}^{\ell^{m}}\left(s_{n_m+1},y\right)-e^{\left(s_{n_j+3}-s_{n_j+2}\right)A}Z_{s_{n_j+2}}^{\ell^{m}}\left(s_{n_m+1},y\right)-F^{}_{s_{n_j+2},s_{n_j+3}}\right)\bigg]
			+e^{\left(t-s_{n_m+2}\right)A}Z_{s_{n_m+2}}^{\ell^{m}}\left(s_{n_m+1},y\right)\bigg)\notag
			\\\notag\times\prod_{j=1}^{m}	
			\Big\langle \left(I^{\ell^{j-1}}_{s_{n_j+2},s_{n_j+3}}\right)^{-\frac{1}{2}}e^{\left(s_{n_j+3}-s_{n_j+2}\right)A}B\bigg(s_{n_j+2},
			\sum_{k=j+1}^{m}	e^{\left(s_{n_j+2}-s_{n_k+3}\right)A}\Big[\left(I^{\ell^{k-1}}_{s_{n_k+2},s_{n_k+3}}\right)^{\frac{1}{2}}\left(I^{\ell^{m}}_{s_{n_k+2},s_{n_k+3}}\right)^{-\frac{1}{2}}\\\notag
			\left(Z_{s_{n_k+3}}^{\ell^{m}}\left(s_{n_m+1},y\right)-e^{\left(s_{n_k+3}-s_{n_k+2}\right)A}Z_{s_{n_k+2}}^{\ell^{m}}\left(s_{n_m+1},y\right)-F^{}_{s_{n_k+2},s_{n_k+3}}\right)+F^{}_{s_{n_k+2},s_{n_k+3}}\Big]+e^{\left(s_{n_j+2}-s_{n_m+2}\right)A}\\\notag
			Z_{s_{n_m+2}}^{\ell^{m}}\!\left(s_{n_m+1},y\right)\!\!\bigg),\!
			\left(I^{\ell^{m}}_{s_{n_j+2},s_{n_j+3}}\right)^{-\frac{1}{2}}\!\!
			\left(Z_{s_{n_j+3}}^{\ell^{m}}\left(s_{n_m+1},y\right)-e^{{\left(s_{n_j+3}-s_{n_j+2}\right)}A}Z_{s_{n_j+2}}^{\ell^{m}}\left(s_{n_m+1},y\right)-F^{}_{s_{n_j+2},s_{n_j+3}}\right)\!\Big\rangle 
			\notag\\\times
			\left\langle  \left(I_{s_{n_m+1},s_{n_m+2}}^{\ell^{m}}\right)^{-1}e^{\left(s_{n_m+2}-s_{n_m+1}\right)A}B\left(s_{n_m+1},y\right),Z^{\ell^{m}}_{s_{n_m+2}}\left(s_{n_m+1},0\right)-F^{}_{s_{n_m+1},s_{n_m+2}}\right\rangle
			\Bigg],
		\end{align}
		where we also consider the $\sigma\left(Z^{\ell^{m}}_{s_{n_m+2}}\left(s_{n_m+1},y\right)\right)-$measurability of the random variable 
		\[
			\left\langle  \left(I_{s_{n_m+1},s_{n_m+2}}^{\ell^{m}}\right)^{-1}e^{\left(s_{n_m+2}-s_{n_m+1}\right)A}B\left(s_{n_m+1},y\right),Z^{\ell^{m}}_{s_{n_m+2}}\left(s_{n_m+1},0\right)-F^{}_{s_{n_m+1},s_{n_m+2}}\right\rangle.
		\]
		To shorten the notation we  write ($y\in\mathbb{R}^N$)
		\[
		k^{\ell^0,\dots,\ell^{m}}_{s_{n_m+1},\dots,s_{n+1},t}\left(y\right)=
		\mathbb{E}\left[f\left(Z^{\ell^{m}}_{s_{n_m+2}}\!\left(s_{n_m+1},y\right),Z^{\ell^{m}}_{s_{n_m+3}}\!\left(s_{n_m+1},y\right),\dots, Z^{\ell^{m}}_{s_{n+1}}\!\left(s_{n_m+1},y\right),Z^{\ell^{m}}_t\!\left(s_{n_m+1},y\right)\right)\right] .
		\] 
		Since $Z_r^{\ell^{m}}\left(s_{n_m+1},y\right),\,{r\in\left[s_{n_m+1},t\right]}$, is a Markov process,  we know that  (cfr.  \cite[Proposition $7.2$]{ola})
		\begin{multline*}
			\left(Z^{\ell^{m}}_{s_{n_m+2}}\left(s_{n_m+1},y\right),Z^{\ell^{m}}_{s_{n_m+3}}\left(s_{n_m+1},y\right),\dots, 
			Z^{\ell^{m}}_{s_{n+1}}\left(s_{n_m+1},y\right)
			,Z^{\ell^{m}}_t\left(s_{n_m+1},y\right)\right)
			\\\sim
			\mu^{\ell^{m}}_{s_{n_{m}+1},s_{n_m+2}}\left(y\right)\otimes \mu^{\ell^{m}}_{s_{n_{m}+2},s_{n_m+3}}\otimes \dots
			\otimes 
			\mu^{\ell^{m}}_{s_{n_{}}, s_{n+1}}
			\otimes\mu^{\ell^{m}}_{s_{n+1},t}.
		\end{multline*}
		Hence, using the same notation as in the previous section,
		\begin{multline}\label{dn_1}
			k^{\ell^0,\dots,\ell^{m}}_{s_{n_m+1},\dots,s_{n+1},t}\left(y\right)
			=\int_{\mathbb{R}^N}\phi^{\ell^{m}}_{s_{n_{m+1}},s_{n_m+2}}\left(y,\xi_1\right)\Bigg(\int_{\mathbb{R}^N}\phi^{\ell^{m}}_{s_{n_{m}+2},s_{n_m+3}}\left(\xi_1,\xi_2\right)\bigg(\dots\bigg(
			\\
			\int_{\mathbb{R}^N}\phi^{\ell^{m}}_{s_{n+1},t}\left(\xi_m,\xi_{m+1}\right)
			f
			\left(\xi_1,\dots,\xi_{m+1}\right)d\xi_{m+1}\bigg)
			\dots
			\bigg)d\xi_2\Bigg)d\xi_1.
		\end{multline}
		We wish to rewrite the expression in \eqref{dn_1} as an integral with respect to
		\begin{equation*}
			\mu^{\ell^{m+1}}_{s_{n_{m}+1},s_{n_m+2}}\left(y\right)\otimes \mu^{\ell^{m+1}}_{s_{n_{m}+2},s_{n_m+3}}\otimes \dots
			\otimes\mu^{\ell^{m+1}}_{s_{n},s_{n+1}}
			\otimes\mu^{\ell^{m+1}}_{s_{n+1},t}.
		\end{equation*}
		In order to do so, we sequentially perform the following substitutions:
		\begin{equation*}
			\begin{cases}
				\xi_1=\left(I_{s_{n_m+1},s_{n_m+2}}^{\ell^{m}}\right)^{\frac{1}{2}}
				\left(I_{s_{n_m+1},s_{n_m+2}}^{\ell^{m+1}}\right)^{-\frac{1}{2}}
				\left(\xi_1'-e^{\left(s_{n_m+2}-s_{n_m+1}\right)A}y-F^{}_{s_{n_m+1},s_{n_m+2}}\right)\\
				\qquad\qquad\qquad\qquad\qquad\qquad\qquad\qquad	+e^{\left(s_{n_m+2}-s_{n_m+1}\right)A}y+F^{}_{s_{n_m+1},s_{n_m+2}}\eqqcolon g_1\left(\xi_1'\right);
				\\
				\xi_{h}=\left(I_{s_{n_m+h},s_{n_m+h+1}}^{\ell^{m}}\right)^{\frac{1}{2}}
				\left(I_{s_{n_m+h},s_{n_m+h+1}}^{\ell^{m+1}}\right)^{-\frac{1}{2}}
				\left(\xi_h'-e^{\left(s_{n_m+h+1}-s_{n_m+h}\right)A}\xi'_{h-1}-F^{}_{s_{n_m+h},s_{n_m+h+1}}\right)\\
				\qquad\quad	+e^{\left(s_{n_m+h+1}-s_{n_m+h}\right)A}g_{h-1}\left(\xi'_1,\dots\xi'_{h-1}\right)+F^{}_{s_{n_m+h},s_{n_m+h+1}}\eqqcolon  g_{h}\left(\xi'_1,\dots,\xi'_{h}\right),\quad h=2,\dots,m+1.
			\end{cases}
		\end{equation*}
	In this way, \eqref{dn_1} becomes
		\begin{multline*}
			k^{\ell^0,\dots,\ell^{m}}_{s_{n_m+1},\dots,s_{n+1},t}\left(y\right)
			=
			\int_{\mathbb{R}^N}
			\phi^{\ell^{m+1}}_{s_{n_{m}+1},s_{n_m+2}}\left(y,\xi'_1\right)
			\Bigg(\int_{\mathbb{R}^N}\phi^{\ell^{m+1}}_{s_{n_{m}+2},s_{n_m+3}}\left(\xi'_1,\xi'_2\right)\bigg(\dots\bigg(
			\\
			\int_{\mathbb{R}^N}\phi^{\ell^{m+1}}_{s_{n+1},t}\left(\xi'_m,\xi'_{m+1}\right)
			f
			\left(g_1\left(\xi'_1\right),\dots,g_{m+1}\left(\xi'_1,\dots\xi'_{m+1}\right)\right)d\xi'_{m+1}\bigg)
			\dots
			\bigg)d\xi'_2\Bigg)d\xi'_1.
		\end{multline*}
		Expanding the notation for $f$ contained in \eqref{omg}, we can exploit several cancellations to get
		\begin{align}\label{si}
			\notag	k^{\ell^0,\dots,\ell^m}_{s_{n_m+1},\dots,s_{n+1},t}\left(y\right)
			=\int_{\mathbb{R}^N}	\phi^{\ell^{m+1}}_{s_{n_{m}+1},s_{n_m+2}}\left(y,\xi'_1\right)\Bigg(\int_{\mathbb{R}^N}
			\phi^{\ell^{m+1}}_{s_{n_{m}+2},s_{n_m+3}}\left(\xi'_1,\xi'_2\right)\bigg(\dots\bigg(	\int_{\mathbb{R}^N}\phi^{\ell^{m+1}}_{s_{n+1},t}\left(\xi_m',\xi_{m+1}'\right)\\\notag
			u_0\bigg(\sum_{j=1}^{m+1}e^{\left(t-s_{n_j+3}\right)A}\bigg[F^{}_{s_{n_j+2},s_{n_j+3}}+\left(I^{\ell^{j-1}}_{s_{n_j+2},s_{n_j+3}}\right)^{\frac{1}{2}}\left(I^{\ell^{m+1}}_{s_{n_j+2},s_{n_j+3}}\right)^{-\frac{1}{2}}\\
			\notag
			\left(\xi'_{m_j+2}-e^{\left(s_{n_j+3}-s_{n_j+2}\right)A}\xi'_{m_j+1}-F^{}_{s_{n_j+2},s_{n_j+3}}\right)\bigg]
			+e^{\left(t-s_{n_m+1}\right)A}y\bigg)\\
			\notag
			\times\prod_{j=1}^{m+1}	
			\Big\langle \left(I^{\ell^{j-1}}_{s_{n_j+2},s_{n_j+3}}\right)^{-\frac{1}{2}}e^{\left(s_{n_j+3}-s_{n_j+2}\right)A}B\bigg(s_{n_j+2},
			\sum_{k=j+1}^{m+1}{\color{black}{e^{\left(s_{n_j+2}-s_{n_k+3}\right)A}}}\Big[\left(I^{\ell^{k-1}}_{s_{n_k+2},s_{n_k+3}}\right)^{\frac{1}{2}}\left(I^{\ell^{m+1}}_{s_{n_k+2},s_{n_k+3}}\right)^{-\frac{1}{2}}\\\notag
			\left(\xi'_{m_k+2}-e^{\left(s_{n_k+3}-s_{n_k+2}\right)A}\xi'_{m_k+1}-F^{}_{s_{n_k+2},s_{n_k+3}}\right)+F^{}_{s_{n_k+2},s_{n_k+3}}\Big]
			+e^{\left(s_{n_j+2}-s_{n_m+1}\right)A}y\bigg),
			\\
			\left(I^{\ell^{m+1}}_{s_{n_j+2},s_{n_j+3}}\right)^{-\frac{1}{2}}
			\left(\xi'_{m_j+2}-e^{{\left(s_{n_j+3}-s_{n_j+2}\right)}A}\xi'_{m_j+1}-F^{}_{s_{n_j+2},s_{n_j+3}}\right)\Big\rangle 
			d\xi_{m+1}'\bigg)\dots\bigg)d\xi_{2}'\Bigg)d\xi_1',
		\end{align}
		where we denote by $\xi_0'=y$.
		Noticing that
		$
		\delta_{Z^{\ell^{m+1}}_{s_{n_m+1}}\left(s_{n_m},x\right)}\otimes 	\mu^{\ell^{m+1}}_{s_{n_{m}+1},s_{n_m+2}}\otimes \dots
		\otimes \mu^{\ell^{m+1}}_{s_{n},s_{n+1}}
		\otimes\mu^{\ell^{m+1}}_{s_{n+1},t},\,x\in\mathbb{R}^N,
		$ is a regular conditional distribution for
		\[
		\mathbb{P}\left(\left(Z^{\ell^{m+1}}_{s_{n_m+1}}\left(s_{{n_m}},x\right),Z^{\ell^{m+1}}_{s_{n_m+2}}\left(s_{{n_m}},x\right),\dots,Z^{\ell^{m+1}}_{s_{n+1}}\left(s_{{n_m}},x\right),Z^{\ell^{m+1}}_{t}\left(s_{{n_m}},x\right)\right)\in\cdot\,\Big|\,\sigma\left(Z^{\ell^{m+1}}_{s_{n_m+1}}\left(s_{{n_m}},x\right)\right)\right)
		\]
		thanks to  \cite[Propositions $5.6$-$7.2$]{ola}, 
		\eqref{si} yields \eqref{conde_kn} by the disintegration formula of the conditional expectation. The proof is now complete.
	\end{proof}
	\subsection{Random time--shift}
	We argue  by conditioning with respect to $\mathcal{F}^L$ as in Subsection \ref{sub1.2}. First, we present a result which generalizes \eqref{1} in the proof of Lemma \ref{l13}.  
	\begin{lemma}\label{8}
	Consider $0\!\le\!s <\!t \le\!T$\,and an integer $n\ge1$.\,Then for every $i=0,\dots,n,\, s_{1}\in\left(s,t\right)$ and~$y\in\mathbb{R}^N$, 
		\begin{equation}\label{relation_n}
			k^i_{s_1,t}\left(y\right)=\int_{s_1}^{t}ds_{2}\int_{s_2}^{t}ds_{3}\dots\int_{s_{i}}^{t}ds_{i+1}
			\mathbb{E}_{i}\left[\dots\left[\mathbb{E}_0\left[
			k^{\ell^0,\dots,\ell^i}_{s_1,\dots,s_{i+1},t}\left(y\right)
			\Big|_{\ell^0=L\left(\omega_0\right)}\right]\dots\right]\bigg|_{\ell^{i}=L\left(\omega_{i}\right)}
			\right].
		\end{equation}
	\end{lemma}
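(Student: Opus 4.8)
The plan is to argue by induction on $i$, in the spirit of the proof of Lemma~\ref{l13}. The base case $i=0$ is nothing but \eqref{1} with $u=s_1$: there it was shown that $k^0_{s_1,t}(y)=\mathbb{E}_0\big[\,k^{\ell^0}_{s_1,t}(y)\big|_{\ell^0=L(\omega_0)}\big]$, which is exactly the right--hand side of \eqref{relation_n} when $i=0$, the chain of time--integrals being empty in that case.

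For the inductive step, assume that \eqref{relation_n} holds for $i-1$ with an arbitrary first time--variable in $(s,t)$, and fix $i\ge 1$, $s_1\in(s,t)$ and $y\in\mathbb{R}^N$. By the iteration scheme \eqref{it_scheme} and Lemma~\ref{cont_k} (which ensures $v^i_{s_1}(t,\cdot)\in C_b^1(\mathbb{R}^N)$ and allows one to differentiate under the integral sign),
\[
k^i_{s_1,t}(y)=\big\langle B(s_1,y),\nabla^\top v^i_{s_1}(t,y)\big\rangle=\int_{s_1}^{t}\big\langle B(s_1,y),\nabla^\top R_{s_1,s_2}k^{i-1}_{s_2,t}(y)\big\rangle\,ds_2 .
\]
The integrand is then handled by applying the Bismut--type identity \eqref{no_bel} with $\psi=k^{i-1}_{s_2,t}$ and direction $h=B(s_1,y)$, conditioning with respect to $\mathcal{F}^L$ and changing probability space exactly as in the passages \eqref{1}--\eqref{2} (the substitution formula of \cite[Lemma~5]{AB} turns $I^L_{s_1,s_2}$ into the deterministic matrix $I^\ell_{s_1,s_2}$ and $Z^{s_1,y}_{s_2}$ into $\widebar{Z}^{\ell}_{s_2}(s_1,\cdot)$ on $\mathbb{W}$). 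Together with the deterministic time--shift formula \eqref{der_deter} this gives
\[
\big\langle B(s_1,y),\nabla^\top R_{s_1,s_2}k^{i-1}_{s_2,t}(y)\big\rangle
=\mathbb{E}_i\Big[\big\langle B(s_1,y),\nabla^\top_z\,\mathbb{E}_{\mathbb{W}}\big[k^{i-1}_{s_2,t}\big(\widebar{Z}^{\ell^i}_{s_2}(s_1,z)\big)\big]\big|_{z=y}\big\rangle\Big|_{\ell^i=L(\omega_i)}\Big].
\]

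Next, into the inner conditional expectation I would substitute the inductive hypothesis for $k^{i-1}_{s_2,t}$ (whose first time--variable is now $s_2$), expressing it as $\int_{s_2}^{t}ds_3\cdots\int_{s_i}^{t}ds_{i+1}\,\mathbb{E}_{i-1}[\cdots\mathbb{E}_0[k^{\ell^0,\dots,\ell^{i-1}}_{s_2,\dots,s_{i+1},t}(\cdot)\,|_{\ell^0=L(\omega_0)}]\cdots]\,|_{\ell^{i-1}=L(\omega_{i-1})}$. Since $\Omega_0,\dots,\Omega_{i-1}$ are independent of $\Omega_i$ and of $\mathbb{W}$, and since the integrands are dominated by integrable functions in view of the gradient bounds \eqref{no_bel}, \eqref{est12}, \eqref{der_deter} and the moment estimate \eqref{sub} (as in the proof of Lemma~\ref{cont_k}), Fubini's theorem allows to pull the integrals $\int_{s_2}^{t}ds_3\cdots\int_{s_i}^{t}ds_{i+1}$ and the expectations $\mathbb{E}_0,\dots,\mathbb{E}_{i-1}$ out of $\mathbb{E}_{\mathbb{W}}$, while dominated convergence moves $\nabla^\top_z$ past all of them. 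Recalling the definition \eqref{it_det_def} and its probability--space independence (cfr.\ Remark~\ref{cambia}),
\[
\big\langle B(s_1,y),\nabla^\top_z\,\mathbb{E}_{\mathbb{W}}\big[k^{\ell^0,\dots,\ell^{i-1}}_{s_2,\dots,s_{i+1},t}\big(\widebar{Z}^{\ell^i}_{s_2}(s_1,z)\big)\big]\big|_{z=y}\big\rangle
=k^{\ell^0,\dots,\ell^i}_{s_1,s_2,\dots,s_{i+1},t}(y),
\]
so that, applying Fubini once more to move the remaining $s$--integrals outside the outermost $\mathbb{E}_i$, one obtains precisely \eqref{relation_n} for $i$, closing the induction.

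The main obstacle is not a single hard estimate but the bookkeeping: keeping track of which copy $L(\omega_j)$ of the subordinator is plugged into which slot $\ell^j$, in what order the (conditional) expectations $\mathbb{E}_0,\dots,\mathbb{E}_i$ and $\mathbb{E}_{\mathbb{W}}$ are nested, and checking at each stage that the integrands are integrable so that the Fubini interchanges and the differentiation under the integral sign are legitimate --- all this has to dovetail with the change--of--probability--space machinery set up before Lemma~\ref{l13} and with the deterministic time--shift identity \eqref{der_deter}.
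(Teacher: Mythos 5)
Your proposal is correct and follows essentially the same route as the paper: induction on $i$ with base case \eqref{1}, the derivative formula for the OU semigroup, conditioning on $\mathcal{F}^L$ with the change of probability space to $\mathbb{W}$, substitution of the inductive hypothesis, and Fubini to reorder the nested expectations and time--integrals before identifying the inner term via \eqref{it_det_def}. The only (harmless) cosmetic difference is that you keep the gradient form \eqref{der_deter}/\eqref{it_det_def} and so must also interchange $\nabla_z$ with the integrals, whereas the paper works with the explicit weight representation \eqref{no_bel}/\eqref{it_det}, for which Fubini alone suffices.
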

In this expression,  we ignore the time--integrals when $i=0$.
	\begin{proof}
		Take an integer $n\ge 1$ and proceed by induction on $i$. For $i=0$, there are no integrals in time in \eqref{relation_n}, which then reduces to \eqref{1} with $s_1=u$.
		
		Suppose that the statement holds true for $i=m-1$, for some  $m=1,\dots,n$: we want to prove its validity also for $i=m$. In order to do so, let us fix $y\in\mathbb{R}^N$ and $s_1\in\left(s,t\right)$; recalling the definition of $k^m_{s_1,t}$ in \eqref{it_scheme}, by  Lemma \ref{cont_k} we can apply    \eqref{no_bel} to get
		\begin{multline*}
			k^{m}_{s_{1},t}\left(y\right)
			=
			\int_{s_1}^{t}ds_2\, \mathbb{E}_{m}\Bigg[
			\mathbb{E}_{\mathbb{W}}\bigg[
			k^{m-1}_{s_2,t}\left(\widebar{Z}^{\ell^{m}}_{s_2}\left(s_1,y\right)\right)\\
			\times \left\langle \left(I_{s_1,s_2}^{\ell^{m}}\right)^{-1}e^{\left(s_{2}-s_1\right)A}B\left(s_1,y\right), \widebar{Z}^{\ell^{m}}_{s_2}\left(s_1,y\right)
			-e^{\left(s_{2}-s_1\right)A}y-F_{s_1,s_2}^{}
			\right\rangle\bigg]\bigg|_{\ell^{m}=L\left(\omega_{m}\right)}\Bigg].
		\end{multline*}
		By the inductive hypothesis, we substitute the expression for $k_{s_2,t}^{m-1},\,s_2\in\left(s_1,t\right),$ in the previous equality to obtain (ignoring the inner time--integral when $m=1$)
		\begin{align*}
			k^{m}_{s_{1},t}\left(y\right)
			=
			\int_{s_1}^{t}ds_2\,
			\mathbb{E}_{m}\Bigg[\mathbb{E}_{\mathbb{W}_{}}\Bigg[\\
			\int_{s_2}^{t}ds_{3}\dots\int_{s_m}^{t}ds_{m+1}\
			\mathbb{E}_{m-1}\Bigg[\dots\left[\mathbb{E}_0\left[
			k^{\ell^0,\dots,\ell^{m-1}}_{s_2,\dots,s_{m+1},t}\left(\widebar{Z}^{\ell^{m}}_{s_{2}}\left(s_1,y\right)\right)
			\bigg|_{\ell^0=L\left(\omega_0\right)}\right]\dots\right]\Bigg|_{\ell^{m-1}=L\left(\omega_{m-1}\right)}
			\Bigg]			
			\\
			\times\left\langle \left(I_{s_1,s_2}^{\ell^{m}}\right)^{-1}e^{\left(s_{2}-s_1\right)A}B\left(s_1,y\right), \widebar{Z}^{\ell^{m}}_{s_2}\left(s_1,y\right)
			-e^{\left(s_{2}-s_1\right)A}y-F^{}_{s_1,s_2}\right\rangle
			\Bigg]\Bigg|_{\ell^{m}=L\left(\omega_{m}\right)}\Bigg]
			,
		\end{align*}
		which we rewrite by Fubini's theorem --whose application is guaranteed by \cite[Lemma $2.12$]{LRF1}, upon carrying out computations similar to those in the proof of \cite[Theorem $6$]{AB} (see also \cite[Proposition $3.2$]{BB})-- as follows:
		\begin{align*}
			k^{m}_{s_{1},t}\left(y\right)
			=
			\int_{s_1}^{t}ds_2\int_{s_2}^{t}ds_3\dots\int_{s_m}^{t}ds_{m+1}
			\mathbb{E}_{m}\Bigg[		
			\mathbb{E}_{m-1}\bigg[\dots\bigg[\mathbb{E}_0\bigg[\mathbb{E}_{\mathbb{W}}\bigg[\\
				k^{\ell^0,\dots,\ell^{m-1}}_{s_2,\dots,s_{m+1},t}\left(\widebar{Z}^{\ell^{m}}_{s_{2}}\left(s_1,y\right)\right)
				\left\langle \left(I_{s_1,s_2}^{\ell^{m}}\right)^{-1}e^{\left(s_{2}-s_1\right)A}B\left(s_1,y\right), \widebar{Z}^{\ell^{m}}_{s_2}\left(s_1,y\right)
				-e^{\left(s_{2}-s_1\right)A}y-F^{}_{s_1,s_2}\right\rangle\\
				\bigg]\bigg|_{\ell^0=L\left(\omega_0\right)}\bigg]\dots\bigg]\bigg|_{\ell^{m-1}=L\left(\omega_{m-1}\right)}	
				\bigg]\bigg|_{\ell^{m}=L\left(\omega_{m}\right)}\Bigg]
				.
			\end{align*}
			This provides us with \eqref{relation_n}, once we plug in the expression of $k^{\ell^0,\dots,\ell^m}_{s_1,\dots,s_{m+1},t}\left({y}\right)$ in \eqref{it_det}. Thus, the proof is complete.
		\end{proof}
		According to \eqref{n_goal}, given $0\le s <s_1<t\le T$ we are interested in
		\begin{multline}\label{3_1}
			R_{s,s_1}k^n_{s_1,t}\left(x\right)=\mathbb{E}_{n+1}\left[\restr{\mathbb{E}_{\mathbb{W}_{}}\left[k^n_{s_1,t}\left(\widebar{Z}^{\ell^{n+1}}_{s_1}\left(s,x\right)\right)\right]}{\ell^{n+1}=L\left(\omega_{n+1}\right)}\right]
			=
			\int_{s_1}^{t}ds_{2}\int_{s_2}^{t}ds_{3}\dots\int_{s_{n}}^{t}ds_{n+1}\\
			\mathbb{E}_{0}\left[\dots\left[\mathbb{E}_{n+1}\left[\mathbb{E}_{\mathbb{W}_{}}
			\left[k^{\ell^0,\dots,\ell^{n}}_{s_1,\dots,s_{n+1},t}\left(\widebar{Z}^{\ell^{n+1}}_{s_1}\left(s,x\right)\right)\right]
			\Big|_{\ell^{n+1}=L\left(\omega_{n+1}\right)}\right]\dots\right]\bigg|_{\ell^{0}=L\left(\omega_{0}\right)}
			\right],
		\end{multline}
where we use Lemma \ref{8} and Fubini's theorem for the second equality.	Since Lemma \ref{L8} in the previous subsection gives us a formula for $k^{\ell^0,\dots,\ell^n}_{s_1,\dots,s_{n+1},t}\left(\widebar{Z}^{\ell^{n+1}}_{s_1}\left(s,x\right)\right)$ (see \eqref{conde_kn} with $s_0=s$ and $i=n$), we just plug it into \eqref{3_1}, apply the law of total expectation and reason backwards with the conditioning in $\mathcal{F}^L$ to deduce the next result (cfr. \cite[Theorem $2.3$]{LRF2}).
		\begin{theorem}\label{general}
			For every $x\in\mathbb{R}^N$ and $0\le s<t\le T$ one has
			\begin{align}\label{gene_ite}
				\notag		v^{n+1}_s\left(t,x\right)=\int_{s}^{t}ds_1\int_{s_1}^{t}ds_2\dots\int_{s_n}^{t}ds_{n+1}\left(\mathbb{E}_0\otimes \mathbb{E}_1\otimes\dots\otimes  \mathbb{E}_{n+1}\right)\Bigg[
				\\\notag
				u_0\bigg(\sum_{j=1}^{n+1}e^{\left(t-s_{n_j+3}\right)A}\bigg[F_{s_{n_j+2},s_{n_j+3}}+\left(I^{L}_{s_{n_j+2},s_{n_j+3}}\left(\omega_{j-1}\right)\right)^{\frac{1}{2}}\\\left(\left(I^{L}_{s_{n_j+2},s_{n_j+3}}\right)^{-\frac{1}{2}}
				\notag\left(Z_{s_{n_j+3}}^{s,x}-e^{\left(s_{n_j+3}-s_{n_j+2}\right)A}Z_{s_{n_j+2}}^{s,x}-F_{s_{n_j+2},s_{n_j+3}}\right)\right)\left(\omega_{n+1}\right)\bigg]
				+e^{\left(t-s_{1}\right)A}Z_{s_1}^{s,x}\left(\omega_{n+1}\right)\bigg)
				\\\notag\times\prod_{j=1}^{n+1}	
				\Big\langle \left(I^{L}_{s_{n_j+2},s_{n_j+3}}\left(\omega_{j-1}\right)\right)^{-\frac{1}{2}}e^{\left(s_{n_j+3}-s_{n_j+2}\right)A}B\bigg(s_{n_j+2},
				\sum_{k=j+1}^{n+1}			{\color{black}{e^{\left(s_{n_j+2}-s_{n_k+3}\right)A}}}\Big[\left(I^{L}_{s_{n_k+2},s_{n_k+3}}\left(\omega_{k-1}\right)\right)^{\frac{1}{2}}
				\\\notag
				\left(\left(I^{L}_{s_{n_k+2},s_{n_k+3}}\right)^{-\frac{1}{2}}
				\left(Z_{s_{n_k+3}}^{s,x}-e^{\left(s_{n_k+3}-s_{n_k+2}\right)A}Z_{s_{n_k+2}}^{s,x}-F_{s_{n_k+2},s_{n_k+3}}\right)\right)\left(\omega_{n+1}\right)
				+F_{s_{n_k+2},s_{n_k+3}}\Big]
				\\+e^{\left(s_{n_j+2}-s_{1}\right)A}
				Z_{s_1}^{s,x}\left(\omega_{n+1}\right)\bigg),
				\left(I^{L}_{s_{n_j+2},s_{n_j+3}}\right)^{-\frac{1}{2}}\!
				\left(Z_{s_{n_j+3}}^{s,x}-e^{{\left(s_{n_j+3}-s_{n_j+2}\right)}A}Z_{s_{n_j+2}}^{s,x}-F_{s_{n_j+2},s_{n_j+3}}\right)\left(\omega_{n+1}\right)\Big\rangle 
				\Bigg],
			\end{align}
			where $s_{n+2}=t.$
		\end{theorem}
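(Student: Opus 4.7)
The plan is to combine the two main ingredients already established in this subsection---equation \eqref{3_1}, which follows from Lemma \ref{8} and Fubini's theorem, and Lemma \ref{L8}, which furnishes an explicit expression for $k^{\ell^0,\dots,\ell^n}_{s_1,\dots,s_{n+1},t}$---and then to reverse the passage from the deterministic time-shift back to the subordinated Brownian motion by conditioning on $\mathcal{F}^L$ on each copy $\Omega_j$ of $\Omega$.

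First, I would write $v^{n+1}_s(t,x)=\int_s^t R_{s,s_1}k^n_{s_1,t}(x)\,ds_1$ and use \eqref{3_1} to reduce the problem to the evaluation of an iterated expectation whose innermost random variable is $k^{\ell^0,\dots,\ell^n}_{s_1,\dots,s_{n+1},t}(\widebar{Z}^{\ell^{n+1}}_{s_1}(s,x))$, after multiple time-integrals $\int_{s_1}^t ds_2\cdots\int_{s_n}^t ds_{n+1}$ have been pulled to the outside.

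Next, I would apply Lemma \ref{L8} with $i=n$ and $s_0=s$, replacing this random variable with its conditional-expectation formula \eqref{conde_kn} (in which every OU increment is driven by the single deterministic time-shift $\ell^{n+1}$). Since the conditioning $\sigma$-algebra in \eqref{conde_kn} is a sub-$\sigma$-algebra of the one with respect to which $\mathbb{E}_\mathbb{W}$ integrates in \eqref{3_1}, by the tower property the inner conditional expectation is absorbed as soon as $\mathbb{E}_\mathbb{W}$ is taken, yielding a plain integral on $\mathbb{W}$ of the explicit integrand of \eqref{conde_kn}.

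Third, I would reverse the change of probability space. By the same independence and regular-conditional-distribution argument used in the proof of Lemma \ref{l13}, each operation $\mathbb{E}_j\left[\mathbb{E}_\mathbb{W}\left[\,\cdot\,\right]\big|_{\ell^j=L(\omega_j)}\right]$ equals the expectation on $\Omega_j$ of the same integrand with $\ell^j$ replaced by $L(\omega_j)$ and every $\widebar{Z}^{\ell^j}$ replaced by the corresponding OU process on $\Omega_j$. Crucially, in \eqref{conde_kn} the subordinators $\ell^0,\dots,\ell^n$ appear only through the matrices $I^{L}_{\cdot,\cdot}$---all OU increments are driven by the outer $\ell^{n+1}$---so after the substitutions $\omega_0,\dots,\omega_n$ contribute only via $I^{L}_{s_{n_j+2},s_{n_j+3}}(\omega_{j-1})$, whereas $\omega_{n+1}$ carries both its own subordinator and the standardized increments $(I^{L}_{s_{n_j+2},s_{n_j+3}})^{-1/2}\bigl(Z^{s,x}_{s_{n_j+3}}-e^{(s_{n_j+3}-s_{n_j+2})A}Z^{s,x}_{s_{n_j+2}}-F_{s_{n_j+2},s_{n_j+3}}\bigr)$. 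By independence of the copies, the resulting iterated expectations collapse into the product $\mathbb{E}_0\otimes\cdots\otimes\mathbb{E}_{n+1}$ of \eqref{gene_ite}.

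The anticipated obstacle is essentially bookkeeping: tracking, in the long integrand produced by \eqref{conde_kn}, which $\ell^{j-1}$ sits in which factor, and then verifying that after the substitutions $\ell^{j-1}\mapsto L(\omega_{j-1})$ each matrix $I^{L}$ sits on the right copy $\Omega_{j-1}$ while the Gaussian-type standardized increments sit on $\Omega_{n+1}$, as displayed in \eqref{gene_ite}. The only genuine analytic issue is the application of Fubini's theorem when interchanging the time-integrals $\int_{s_1}^t ds_2\cdots\int_{s_n}^t ds_{n+1}$ with the expectations on the copies; this is justified by the uniform bounds of Lemma \ref{cont_k} together with the estimates of \cite[Lemma $2.12$]{LRF1}, exactly as already invoked in the proof of Lemma \ref{8}.
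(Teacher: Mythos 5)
Your proposal is correct and follows essentially the same route as the paper: starting from \eqref{3_1}, plugging in the formula of Lemma \ref{L8} with $i=n$ and $s_0=s$, applying the law of total expectation to absorb the conditioning, and reasoning backwards with the conditioning in $\mathcal{F}^L$ to return to the subordinated setting before integrating in $s_1$. The paper states this in one sentence after \eqref{3_1}; your additional remarks on the tower property, the independence of the copies, and the Fubini justification via Lemma \ref{cont_k} and \cite[Lemma $2.12$]{LRF1} simply make explicit what the paper leaves implicit.
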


			\section{Numerical simulations}\label{simulations}
			In this section we report on the results obtained by implementing the iterative scheme described above for two choices of the nonlinear vector field $B_0$. We interpret the SDE in \eqref{st_semi} as a finite--dimensional approximation of the reaction--diffusion SPDE
			\begin{equation*}
				\begin{cases}
					dX\left(t,\xi\right)=\left(\Delta X\left(t,\xi\right)+B_0\left(t,X\left(t,\xi\right)\right)\right)dt+\sigma \,dW_{L_t},&t\ge s,\\
					{X}\left(s,\xi\right)=x\left(\xi\right),&\xi\in\mathbb{T}^1,
				\end{cases}
			\end{equation*}
			where  $\mathbb{T}^1=\mathbb{R}^1/\mathbb{Z}^1$ is the one--dimensional torus (we refer to \cite[Example 1]{AB} for an accurate description of this framework). Hence we consider $\lambda_k=\left|k\right|^2,\,k=1,\dots,N$, and we take $Q=\sigma^2 \text{Id}.$ Here $\sigma>0$ is a parameter describing the strength of the noise. Before moving to the application of the model, we have to determine the \emph{time--shift} function $f\in C\left(\left[0,T\right];\mathbb{R}^N\right)$ appearing in the OU process $Z^{s,x},\,x\in\mathbb{R}^N$ (see \eqref{OU}). Since we are dealing with a rotation--invariant noise and $\alpha\in\left(\frac12,1\right)$, $\mathbb{E}\left[W_{L_t}\right]=0,\,t\ge0$. As a consequence, the choice of $f$ can be motivated as in \cite[Introduction]{LRF2} for the Brownian case. In brief, we consider 
			\[
				f\left(t\right)= B_0\left(t,x\left(t\right)\right),\quad t\in\left[0,T\right],
			\]
			where $x\left(\cdot\right)\colon \left[0,T\right]\to\mathbb{R}^N$ is the unique solution of the integral equation
			\begin{equation}\label{shift}
				x\left(t\right)=x+\int_{s}^{t}\left(Ax\left(r\right)+B_0\left(r,x\left(r\right)\right)\right)dr,\quad t\in\left[s,T\right],
			\end{equation}
		and $x\left(t\right)=x,\,t\in\left[0,s\right].$ 	Of course,  $x\left(\cdot\right)$ is computed numerically.  Note that \eqref{shift} is the deterministic counterpart of the semilinear SDE \eqref{st_semi}, and that the expected value function of the OU process coincides with $x\left(\cdot\right)$ in the interval $\left[s,T\right]$ by the choice of $f$. The intuition is that, at least when the noise is weak, the trajectories of the semilinear solutions are ``close'' to $x\left(\cdot\right),$ allowing the $0-$th iterate to perform better than it would do with $f\equiv0$. Figure \ref{fig2} clearly displays this idea in the case of (bounded) cubic nonlinearity treated below (see \eqref{bcn}). Furthermore, in the sequel we monitor the effect of the time--shift on the first order approximation provided by our scheme. All the simulations are carried out using the High Performance Computing Center of the Scuola Normale Superiore (\url{https://hpccenter.sns.it}).
			\begin{figure}[b]
			\centering
			{\includegraphics[height=.21\textheight,width=.49\textwidth]{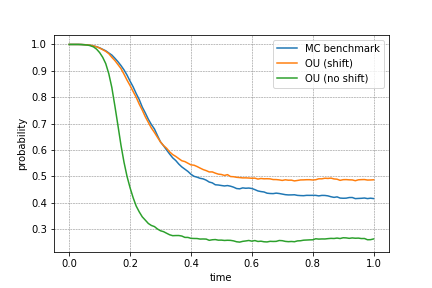}}
			\hfill
			{\includegraphics[height=.21\textheight, width=.49\textwidth]{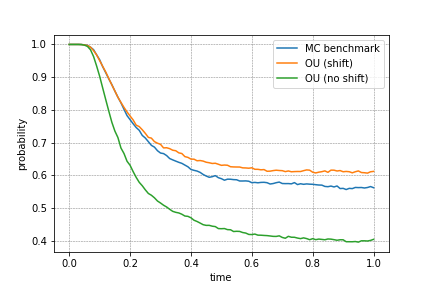}}
			\caption{Behavior in time of the OU approximations in the bounded cubic case with and without time--shift. The panel on the left refers to $\alpha=0.55$, the one on the right to $\alpha=0.85$. $\sigma=0.5$ everywhere.}
			\label{fig2}
		\end{figure}\\
			We work in dimension $N=100$, with $u_0\left(x\right)=1_{\left\{\left|x\right|>R\right\}},\,x\in\mathbb{R}^N,$ for some $R>0$, and we denote by $\mathbf{e}\in\mathbb{R}^N$ the vector with all components equal to $1$. In particular, given $0\le s<t\le 1$, we are interested in applying our iterates to approximate $P_{s,t}u_0\left(\mathbf{e}\right)=\mathbb{P}\left(\left|X_t^{s,\mathbf{e}}\right|>R\right)$, whose reference value is computed by averaging $10^5$ samples of $X_t^{s,\mathbf{e}}$ obtained by the Euler--Maruyama scheme with time step $10^{-4}$. The same strategy is used to obtain the $0-$th iterate $v^0_s\left(t,\mathbf{e}\right)=\mathbb{P}\left(\left|Z^{s,\mathbf{e}}_t\right|>R\right)$. In order to calculate the numerical integrals appearing in the formulas for $v^n_s\left(t,\mathbf{e}\right),\,n\in\mathbb{N}$ (see \eqref{k^0}-\eqref{gene_ite}), we use left Riemann sums in a uniform grid with mesh $10^{-2}.$  We will keep track of the relative error  $\epsilon^n_r$, defined by
			\[			\epsilon^n_r=\frac{P_{s,t}u_0\left(\mathbf{e}\right)-\sum_{i=0}^{n}v^i_s\left(t,\mathbf{e}\right)}{P_{s,t}u_0\left(\mathbf{e}\right)},\quad n\in\mathbb{N}\cup\left\{0\right\}.
			\]
			Finally, we will mainly focus on the first iteration, with the aim of understanding the possible improvements that it provides over the linear approximation of the OU process. In fact, although it is possible to implement our scheme up to any order thanks to \eqref{gene_ite}, one needs an $n-$dimensional integral (in time) to get the iterate $v^n_s\left(t,\mathbf{e}\right),\,n\in\mathbb{N},$ fact which complicates the application of our method and may result in losing its computational advantage over the classical Euler--Maruyama approach.
			In what follows, we fix the initial time $s=0$ and  the threshold $R=1$. For the subordinator $L$, we set $\bar{\gamma}=1$ in \eqref{subo}.
			
			We first take $B_0\left(x\right)_k=\sin\left(x_k\right),\,k=1,\dots,N$.  Table \ref{t1} shows the performance of the first order approximation of the iterative scheme with time--shift as $\alpha$ varies in $\left(\frac{1}{2},1\right)$,  $\sigma=1$ and $t=1$. Table \ref{t2} is analogous, but it refers to $f\equiv0$ (no time--shift). The first thing we notice is that in both cases the first iteration improves on  the outcomes of the linear approximation. The role of the time--shift $f$ is evident in the column $\epsilon^0_r$: it allows $v^0_0\left(1,\mathbf{e}\right)$ to be closer to the benchmark probability, and the first iterate builds on this to guarantee a better overall performance, particularly when $\alpha$ is close to $\frac{1}{2}$.
			\begin{table}
				\centering
				\begin{tabular}{ | c| c| c| c| c|c |}
					\hline
					$\alpha$ & $\mathbb{P}(|X_1^{0,\mathbf{e}}|>1)$& $v^0_0\left(1,\mathbf{e}\right)$&$\epsilon^0_r$
					& $v^1_0\left(1,\mathbf{e}\right)$&$\epsilon^1_r$
					\\ \hhline{|=|=|=|=|=|=|}
					0.55 & 0.687&0.639&6.99e-2&0.012&5.24e-2 \\ \hline
					0.65 & 0.713&0.676&5.19e-2&1.34e-2&3.31e-2 \\ \hline
					0.75 & 0.794&0.737&7.18e-2&3.34e-2&2.97e-2 \\ \hline
					0.85 & 0.899&0.863&0.040&1.87e-2&1.92e-2 \\ 
					\hline
				\end{tabular}
				\caption{First order approximation in the sine case with time--shift; noise strength $\sigma=1$. }
				\label{t1}
				\vspace{1em}
				\centering
				\begin{tabular}{ | c| c| c| c| c|c |}
					\hline
					$\alpha$ & $\mathbb{P}(|X_1^{0,\mathbf{e}}|>1)$& $v^0_0\left(1,\mathbf{e}\right)$&$\epsilon^0_r$
					& $v^1_0\left(1,\mathbf{e}\right)$&$\epsilon^1_r$
					\\ \hhline{|=|=|=|=|=|=|}
					0.55 & 0.691&0.502&0.274&0.101& 0.127\\ \hline
					0.65 & 0.720&0.558&0.225&0.110& 7.22e-2\\\hline
					0.75 & 0.785&0.666&0.151&8.84e-2&0.039 \\\hline
					0.85&0.896&0.840&6.25e-2&3.86e-2&1.94e-2\\
					\hline
				\end{tabular}
				\caption{Same setting as in Table \ref{t1}, without time--shift ($f\equiv 0$).}
				\label{t2}
			\end{table}\\
			Next, Figure \ref{fig1} displays the behavior in time --up to $t=1$-- of the first order approximation in the case of time--shift for two strengths of noise ($\sigma=0.1$ and $\sigma=1.3$). Here $\alpha=0.6$ is fixed. The panels of this figure highlight the benefits of considering $v^1_0\left(\cdot,\mathbf{e}\right)$ over the starting OU estimates, especially when the noise is~weak. 
			
			Secondly, we analyze the polynomial vector field 
			\begin{equation}\label{bcn}
			B_0\left(x\right)_k=b_0\norm{\bar{y}}_\infty\frac{\left(\bar{y}_k-x_k\right)\left|\bar{y}_k-x_k\right|^2}{b_0\norm{\bar{y}}_\infty+{\mathcal{S}\left(\mathcal{S}^+\left(\bar{y}-x\right)\right)}^3},\quad k=1,\dots, N,
			\end{equation}
			where $\bar{y}\in\mathbb{R}^N,\,b_0>0,\,\mathcal{S}\colon \mathbb{R}^N\to \mathbb{R}$ and $\mathcal{S}^+\colon \mathbb{R}^N\to \mathbb{R}^N$, with ($x\in\mathbb{R}^N$)
			\[
				\mathcal{S}\left(x\right)=\frac{\sum_{i=1}^N x_ie^{ax_i}}{\sum_{i=1}^N e^{ax_i}}; 
				\qquad
				 \mathcal{S}^+\left(x\right)_k=\frac{x_ke^{ax_k}-x_ke^{-ax_k}}{e^{ax_k}+e^{-ax_k}}
				 ,\,k=1,\dots,N.
			\]
			The maps $\mathcal{S},\mathcal{S}^+$ are smooth approximations of the maximum function and replace the infinity norm in \eqref{bcn}, allowing $B_0\in C_b^{3}\left(\mathbb{R}^N;\mathbb{R}^N\right),$ coherently with our theoretical framework. Therefore $B_0$ is to be interpreted as a  cubic nonlinearity with a cutoff for large values of  $\norm{x}_\infty.$ For our experiments, we consider $b_0=2,\,\bar{y}=2\mathbf{e}$ and $a=1\text{e}\,04$.  In Tables \ref{t3}-\ref{t4} we report the outcomes of simulations with and without $f$, respectively, when $\sigma=0.7,\, t=1$ and $\alpha$ varies in $\left(\frac{1}{2},1\right).$  In particular, Table \ref{t3} shows that, in the case of time--shift, the first iterate always remarkably outperforms the linear approximation. On the contrary, when $f\equiv0$ (Table \ref{t4}), $v^1_0\left(1,\mathbf{e}\right)$ deteriorates the OU estimate, and we are forced to implement the second iterate to get an accuracy similar to the one provided by the time--shift (compare the columns $\epsilon^1_r$, Table \ref{t3}, and $\epsilon^2_r$, Table \ref{t4}). Of course, the trade--off in the introduction of $v^2_0\left(1,\mathbf{e}\right)$ consists in substantially increasing the computational time. \\
			Finally, in Figure \ref{fig3} we investigate the trajectories of $P_{0,\cdot}u_0\left(\mathbf{e}\right)$ and of the first order approximation in the time interval $\left[0,1\right]$, as well as the corresponding absolute relative errors. Here we fix $\alpha=0.6$ and consider two strengths of noise: $\sigma=0.1$ and $\sigma=1.3$. As already observed in the sine case, the advantages in introducing the first iterate are rather evident. Overall, we conclude that $v^1_0\left(\cdot, \mathbf{e}\right)$ proves to be a versatile and computationally cheap method to improve on the performances of the linear approximation.
						\vspace{1em}\begin{table}[h]
				\centering
				\begin{tabular}{ | c| c| c| c| c| c|}
					\hline
					$\alpha$ & $\mathbb{P}(|X_1^{0,\mathbf{e}}|>1)$& $v^0_0\left(1,\mathbf{e}\right)$&$\epsilon^0_r$
					& $v^1_0\left(1,\mathbf{e}\right)$&$\epsilon^1_r$
					\\ \hhline{|=|=|=|=|=|=|}
					0.55 & 0.501&0.562&-0.122&-5.19e-2&-1.82e-2\\ \hline
					0.65 & 0.531&0.594&-0.119&-6.65e-2&6.59e-3 \\ \hline
					0.75 & 0.587&0.648&-0.104& -6.40e-2&5.11e-3 \\ \hline
					0.85 & 0.679&0.743&-9.43e-2&-7.95e-2&2.28e-2 \\ \hline
				\end{tabular}
				\caption{First order approximation in the bounded cubic case with time--shift; noise strength $\sigma=0.7$. }
				\label{t3}
				\vspace{1em}
				\centering
				\begin{tabular}{ | c| c| c| c| c|c |c|c|}
					\hline
					$\alpha$ & $\mathbb{P}(|X_1^{0,\mathbf{e}}|>1)$& $v^0_0\left(1,\mathbf{e}\right)$&$\epsilon^0_r$
					& $v^1_0\left(1,\mathbf{e}\right)$&$\epsilon^1_r$&$v^2_0\left(1,\mathbf{e}\right)$&$\epsilon^2_r$
					\\ \hhline{|=|=|=|=|=|=|=|=|}
					0.55 & 0.495&0.374&0.244&-9.64e-4&0.246&0.109&2.62e-2\\ \hline
					0.65 & 0.536&0.396&0.261&-2.74e-2&0.312&0.142&4.74e-2 \\ \hline
					0.75 & 0.586&0.462&0.212&-8.16e-2&0.351&0.191&2.49e-2\\\hline
					0.85& 0.680&0.608&0.106&-8.20e-2&0.226&0.138&2.35e-2\\\hline
				\end{tabular}
				\caption{Same setting as in Table \ref{t3}, without time--shift ($f\equiv 0$).}
				\label{t4}
			\end{table}

\begin{figure}
	\centering
	{\includegraphics[height=0.21\textheight, width=.49\textwidth]{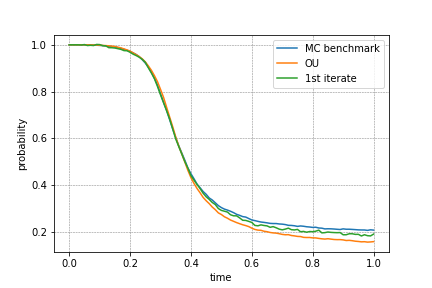}}
	\hfill
	{\includegraphics[height=0.21\textheight,width=.49\textwidth]{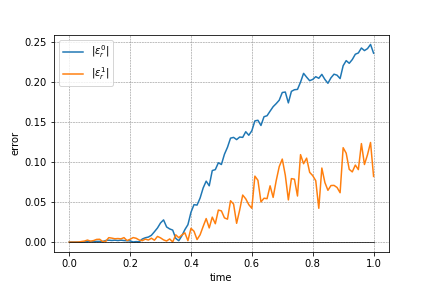}}\\
	{\includegraphics[height=0.21\textheight,width=.49\textwidth]{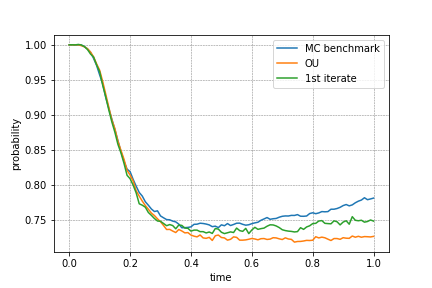}}
	\hfill
	{\includegraphics[height=0.21\textheight,width=.49\textwidth]{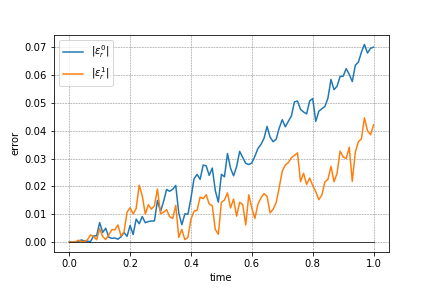}}
	\caption{Behavior in time of the first order approximation in the sine case with time--shift. In each line, the panel on the left shows the evolution of the probabilities, and the one on the right the corresponding errors. The top line refers to $\sigma=0.1$, the bottom line to $\sigma=1.3$. $\alpha=0.6$ everywhere.}
	\label{fig1}
\end{figure}
\begin{figure}
	\centering
	{\includegraphics[height=0.21\textheight,width=.49\textwidth]{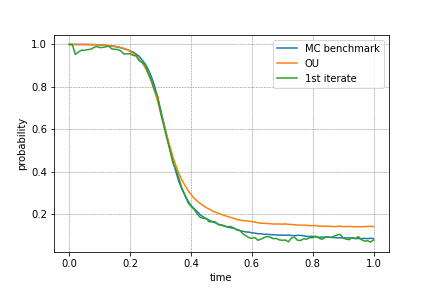}}
	\hfill
	{\includegraphics[height=0.21\textheight,width=.49\textwidth]{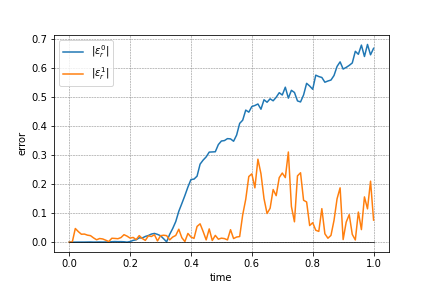}}\\
	{\includegraphics[height=0.21\textheight,width=.49\textwidth]{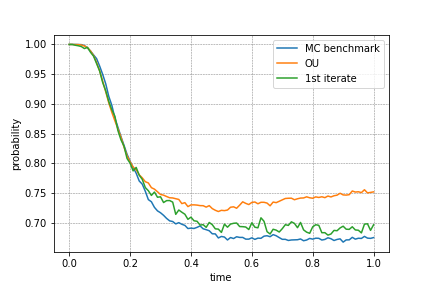}}
	\hfill
	{\includegraphics[height=0.21\textheight,width=.49\textwidth]{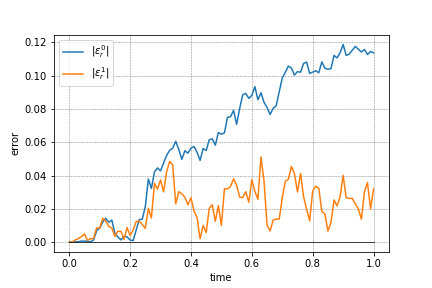}}
	\caption{Behavior in time of the first order approximation in the bounded cubic case with time--shift. In each line, the panel on the left shows the evolution of the probabilities, and the one on the right the corresponding errors. The top line refers to $\sigma=0.1$, the bottom line to $\sigma=1.3$. $\alpha=0.6$ everywhere.}
	\label{fig3}
\end{figure}
			
			\appendix
			\section{ Proof of Lemma \ref{hoc}}\label{appendix}
			In this appendix we provide the proof of Lemma \ref{hoc}, a useful result for the arguments of Section \ref{a_conn}.
			\begin{myproof}	{Lemma}{\ref{hoc}}
				Let us fix $0\le s \le T,\,x\in\mathbb{R}^N$ and a direction $h\in\mathbb{R}^N$; note that all the assertions of the statement are true for $\omega\in\Omega\setminus\Omega'$ by construction of the stochastic flow, hence we only focus on $\omega\in\Omega'$.  For every $\epsilon\in\left(0,1\right]$ and $t\in\left[s,T\right]$ define the incremental ratio function 
				\begin{align}\label{eq_diff}
					\notag Y_{x,h}^1\left(\epsilon,t\right)&=\epsilon^{-1}\left(X_t^{s,x+\epsilon h}\left(\omega\right)-X_t^{s,x}\left(\omega\right)\right)
					\\\notag &=
					h+\int_{s}^{t}\left(AY_{x,h}^1\left(\epsilon,r\right)+\frac{B_0\left(r,X_r^{s,x}\left(\omega\right)+\epsilon Y_{x,h}^1\left(\epsilon,r\right)\right)-B_0\left(r,X_r^{s,x}\left(\omega\right)\right)}{\epsilon}\right)dr
					\\&=h+\int_{s}^{t}\left(A+\int_{0}^{1}DB_0\left(r,X^{s,x}_r\left(\omega\right)+\rho\epsilon Y_{x,h}^1\left(\epsilon,r\right)\right) d\rho\right)Y_{x,h}^1\left(\epsilon,r\right)dr.
				\end{align}
				Notice that, for every $\epsilon\in\left(0,1\right]$ (omitting $\omega$ to keep notation short)
				\[
				\left|X^{s,x+\epsilon h}_t-X^{s,x}_t\right|\le \epsilon \left|h\right|+\left(\left|A\right|+\norm{DB_0}_{T,\infty}\right)\int_{s}^{t}\left|X^{s,x+\epsilon h}_r-X^{s,x}_r\right| dr,\quad t\in\left[s,T\right],
				\]
				where we recall that $\norm{DB_{0}}_{T,\infty}=\sup_{ 0\le t \le T}\norm{DB_0\left(t,\cdot\right)}_\infty$. 
				Thus, an application of Gronwall's lemma shows that $\left|Y_{x,h}^1\left(\epsilon,t\right)\right|\le \left|h\right|e^{\left(\left|A\right|+\norm{DB_0}_{T,\infty}\right)T}\eqqcolon C_1$ for all $t\in\left[s,T\right]$ and $\epsilon\in\left(0,1\right]$.
				Next, taking $\epsilon_1,\,\epsilon_2\in\left(0,1\right]$ and  $t\in\left[s,T\right]$ we compute from \eqref{eq_diff} 
				\begin{align}\label{come_prima}
					\notag	\left|Y_{x,h}^1\left(\epsilon_2,t\right)-Y_{x,h}^1\left(\epsilon_1,t\right)\right|\le 
					\int_{s}^{t}\left|A\right|\left|Y_{x,h}^1\left(\epsilon_2,r\right)-Y_{x,h}^1\left(\epsilon_1,r\right)\right|dr \\
					+\left|\int_{s}^{t}\!\bigg(\!\!\int_{0}^{1}DB_0\left(r,X^{s,x}_r\!+\rho\epsilon_2 Y_{x,h}^1\left(\epsilon_2,r\right)\right) d\rho\,Y_{x,h}^1\!\left(\epsilon_2,r\right)\notag
					-\!\int_{0}^{1}\!\!DB_0\left(r,X^{s,x}_r+\rho\epsilon_1 Y_{x,h}^1\left(\epsilon_1,r\right)\right) d\rho\, Y_{x,h}^1\left(\epsilon_1,r\right)\!\!\bigg)dr\right|\\\notag
					\le\!
					\left(\left|A\right|\!+\!\norm{DB_0}_{T,\infty}\right) \!\!\int_{s}^{t}\left|Y_{x,h}^1\left(\epsilon_2,r\right)-Y_{x,h}^1\left(\epsilon_1,r\right)\right|dr+\frac{N^{{2}}}{2}C_1\norm{\partial^2B_0}_{T,\infty}\!
					\int_{s}^{t}\!\left|\epsilon_2Y_{x,h}^1\left(\epsilon_2,r\right)-\epsilon_1Y_{x,h}^1\left(\epsilon_1,r\right)\right|\!dr\\
					\le \left(\left|A\right|+\norm{DB_0}_{T,\infty}+\frac{N^{2}}{2}C_1\norm{\partial^2B_0}_{T,\infty}\right)\int_{s}^{t}\left|Y_{x,h}^1\left(\epsilon_2,r\right)-Y_{x,h}^1\left(\epsilon_1,r\right)\right|dr+\frac{{N^{2}}}{2}C_1^2T\norm{\partial^2B_0}_{T,\infty}\left|\epsilon_2-\epsilon_1\right|,
				\end{align}
				where $\norm{\partial^2B_0}_{T,\infty}=\sup_{ 0\le t \le T}\norm{\partial^2B_0\left(t,\cdot\right)}_\infty$.
				Therefore another application of Gronwall's lemma shows that  the mapping $\epsilon\mapsto Y_{x,h}^1\left(\epsilon,t\right)$ is Lip--continuous in $\left(0,1\right]$ uniformly in  $t\in\left[s,T\right]$, and by the theorem of extension of uniformly continuous functions we obtain the existence of $D_hX^{s,x}_t\left(\omega\right)$. Now by dominated convergence we are allowed to pass to the limit in \eqref{eq_diff}, which yields
				\begin{equation}\label{der_h}
					D_hX^{s,x}_t\left(\omega\right)=h+\int_{s}^{t}\left(A+DB_0\left(r,X_r^{s,x}\left(\omega\right)\right)\right)D_hX^{s,x}_r\left(\omega\right)dr,\quad t\in\left[s,T\right].
				\end{equation}
				Given the arbitrarity of $h,\,x\in\mathbb{R}^N$, this equation shows that the mapping $x\mapsto X^{s,x}_t\left(\omega\right)$ belongs to $C^1\left(\mathbb{R}^N\right),$ with $\norm{DX^{s,\cdot}_t\left(\omega\right)}_\infty\le N\exp\left\{\left(\left|A\right|+\norm{DB_0}_{T,\infty}\right)T\right\}$.\\ In order to analyze higher--order derivatives, we work by induction; fix $m=1,\dots, n-1$ and suppose as inductive hypothesis that $X^{s,\cdot}_t\left(\omega\right)\in C^m\left(\mathbb{R}^N\right),\,t\in\left[s,T\right]$, with the estimate in \eqref{boundala} holding true for a sum from $i=1$ to $i=m$. Moreover, assume that for every  multi--index $\mathbf{h}\in\left(\mathbb{N}\cup \left\{0\right\}\right)^N$ with length $1\le \norm{\mathbf{h}}_1\le m$ one has, for any $t\in\left[s,T\right]$ (omitting $\omega$)
				\begin{equation}\label{component}
					D_{\mathbf{h}}X^{s,x}_{t}=\delta_\mathbf{h}+\int_{s}^t\left(\left(A+DB_{0}\left(r,X^{s,x}_{r}\right)\right) D_{\mathbf{h}}X^{s,x}_{r} +\mathcal{L}_{\mathbf{h}}\left(r,x\right)\right)dr,
					\quad 
					\delta_\mathbf{h}=\begin{cases}
						e_j,& \text{if }\norm{\mathbf{h}}_1=1 \text{ and } h_j=1,\\
						0,&\text{elsewhere}.
					\end{cases}
				\end{equation}
				Here $\left(e_j\right)_{j=1,\dots,N}$ is the canonical basis of $\mathbb{R}^N$ and  $\mathcal{L}_{\mathbf{h}}\left(t,x\right)=\left(\mathcal{L}_{\mathbf{h},j}\left(t,x\right)\right)_{j=1,\dots, N},$ with $\mathcal{L}_{\mathbf{h},j}\left(t,x\right)\in\mathbb{R}$ denoting a sum of products where one factor is a (partial) derivative at $X_t^{s,x}$ of $B_{0,j}\left(t,\cdot\right)$ up to order $\norm{\mathbf{h}}_1$ and the others are (partial) derivatives at $x$ of $X^{s,\cdot}_t$ up to order $\norm{\mathbf{h}}_1-1$. In particular, $\mathcal{L}_{\mathbf{h}}\left(t,x\right)=0$ when $\norm{\mathbf{h}}_1=1$ (cfr. \eqref{der_h}). At this point, consider  $x,\,h\in\mathbb{R}^N$ and fix a multi--index $\mathbf{h}$ with length $\norm{\mathbf{h}}_1=m$; by analogy with \eqref{eq_diff}, for any $\epsilon\in\left(0,1\right]$ and $t\in\left[s,T\right]$ define the incremental ratio function 
				\begin{multline*}
					Y^{m+1}_{x,h}\left(\epsilon,t\right)=
					\epsilon^{-1}\left(D_{\mathbf{h}}X_{t}^{s,x+\epsilon h}-D_{\mathbf{h}}X_{t}^{s,x}\right)
					\\=\int_{s}^{t}
					\bigg(\left(A+ D B_{0}\left(r,X^{s,x}_{r}\right)\right) Y^{m+1}_{x,h}\left(\epsilon,r\right)+\frac{D B_{0}\left(r,X^{s,x+\epsilon h }_{r}\right)-D B_{0}\left(r,X^{s,x}_{r}\right)}{\epsilon}D_{\mathbf{h}}X^{s,x+\epsilon h}_{r}
					\\+\epsilon^{-1}\left({\mathcal{L}_{\mathbf{h}}\left(r,x+\epsilon h\right)-\mathcal{L}_{\mathbf{h}}\left(r,x\right)}\right)\!\bigg)dr.
				\end{multline*}
				Note that for any $j=1,\dots, N$ we can write ($t\in\left[s,T\right]$,  $\epsilon\in\left(0,1\right]$)
				\[
				\epsilon^{-1} \left({D B_{0}\left(t,X^{s,x+\epsilon h }_{t}\right)-D B_{0}\left(t,X^{s,x}_{t}\right)}\right)_{j,\cdot}
				=
				\left(	\left(\int_{0}^{1}D^2B_{0,j}\left(t, X^{s,x}_t+\rho\epsilon Y_{x,h}^1\left(\epsilon,t\right)\right)d\rho\right) Y^1_{x,h}\left(\epsilon,t\right)\right)^\top, 
				\]
				and that, further, the inductive hypothesis of boundedness for the derivatives of $X^{s,\cdot}_t$ (see \eqref{boundala}), together with the structure of $\mathcal{L}_\mathbf{h}$ and $B_0\in C^{m+1}_b\left(\left[0,T\right]\times \mathbb{R}^N; \mathbb{R}^N\right)$ ensures that 
				\[
				\epsilon^{-1}\left|\mathcal{L}_{\mathbf{h}}\left(t,x+\epsilon h\right)-\mathcal{L}_{\mathbf{h}}\left(t,x\right)\right|\le C_2 \left|h\right|,\quad t\in\left[s,T\right], \, \epsilon\in\left(0,1\right],
				\]
				for some constant $C_2=C_2\left(A,B_0,T,m, N\right)>0$. These facts, the Lip--continuity of the map $\epsilon\mapsto Y_{x,h}^1\left(\epsilon,t\right)$ in $\left(0,1\right]$ uniformly in $t\in\left[s,T\right]$ and computations analogous to those in \eqref{come_prima} entail that there exists $D_h D_{\mathbf{h}}X^{s,x}_t\left(\omega\right)$. The arbitrarity of $x,\,h$ and $\mathbf{h}$ coupled with Gronwall's lemma provides us with the desired bound \eqref{boundala} for the derivatives of order $m+1$, and finally by dominated convergence 
				the validity of \eqref{component} for a multi--index of length $m+1$ is a consequence of the chain rule. In particular, $X^{s,\cdot}_t\left(\omega\right)\in C^{m+1}\left(\mathbb{R}^N\right)$. The proof is then complete, considering that the base case is provided by \eqref{der_h}.
			\end{myproof}

		\end{document}